\topskip \setlength{\parindent}{0pt} \setlength{\parskip}{5pt plus
\newcommand{\des}{\mathrm{des}}
\newcommand{\rr}{\mathrm{r}}
\newcommand{\dd}{\mathrm{d}}
\newcommand{\pp}{\mathrm{p}}
\newcommand{\vv}{\mathrm{v}}
\newcommand{\s}{\mathrm{st}}
\newtheorem{theorem}{Theorem}[section]
\newtheorem{corollary}[theorem]{Corollary}
\newtheorem{lemma}[theorem]{Lemma}
\newtheorem{proposition}[theorem]{Proposition}
\numberwithin{equation}{section}
\title{Counting subwords in flattened permutations}
\author{Toufik Mansour}
\address{Department of Mathematics, University of Haifa, 31905 Haifa, Israel}
\email{tmansour@univ.haifa.ac.il}
\author{Mark Shattuck}
\address{Department of Mathematics, University of Tennessee, Knoxville, TN 37996}
\email{shattuck@math.utk.edu}
\author{David G.L. Wang}
\address{Department of Mathematics, University of Haifa, 31905 Haifa, Israel}
\email{david.combin@gmail.com, wgl@math.haifa.ac.il}
\subjclass[2010]{05A15, 05A05}
\keywords{pattern avoidance; permutation; kernel method}
\begin{document}

\begin{abstract}
In this paper, we consider the number of occurrences of descents, ascents, $123$-subwords, $321$-subwords,
peaks and valleys in flattened permutations, which were recently introduced by Callan in his study of finite set partitions.
For descents and ascents, we make use of the kernel method and obtain an explicit formula (in terms of Eulerian polynomials) for the distribution on $\mathcal{S}_n$ in the flattened sense.
For the other four patterns in question,
we develop a unified approach to obtain explicit formulas for the comparable distributions.
We find that the formulas so obtained for $123$- and $321$-subwords can be expressed in terms of the Chebyshev polynomials of the second kind,
while those for peaks and valleys are more related to the Eulerian polynomials.  We also provide a bijection showing the equidistribution of descents in flattened permutations of a given length with big descents in permutations of the same length in the usual sense.
\end{abstract}

\maketitle

\section{Introduction}

This paper concerns the enumeration problem for particular subwords of length two or three.
The subword counting problem for permutations has been studied extensively from various perspectives in both enumerative and algebraic combinatorics; see, e.g., \cite{EN, Ki}.
The comparable problem has also been considered on other discrete structures such as $k$-ary words \cite{BM}, compositions \cite{MSi}, and set partitions \cite {MSY} (see also \cite{HM} and the references contained therein).
Here, we consider a variant of the subword problem using the concept of a flattened permutation introduced recently
by Callan~\cite{C} in his study of set partitions (and considered further by two of the present authors~\cite{MS}).

Let $\pi$ be a permutation of length $n$ represented in \emph{standard cycle form}, i.e., cycles arranged from left to right in ascending order according to the size of the smallest elements, where the smallest element is first within each cycle.
Define $\text{Flatten}(\pi)$ to be the permutation of length $n$
obtained by erasing the parentheses
enclosing the cycles of $\pi$ and considering the resulting word.
For example, if $\pi=71564328 \in \mathcal{S}_8$,
then the standard cycle form of $\pi$ is $(172)(3546)(8)$ and $\text{Flatten}(\pi)=17235468$.  Throughout, $\mathcal{S}_n$ will denote the set of permutations of size $n$.

Let $\pi=\pi_1\pi_2\cdots\pi_n$ and $\sigma=\sigma_1\sigma_2\cdots\sigma_d$ be permutations of length $n$ and $d$, where $n \geq d$.
In the typical setting, the permutation $\pi$ is said to \emph{contain} $\sigma$ as a \emph{subword} if there exists a set of consecutive letters $\pi_i\pi_{i+1}\cdots\pi_{i+d-1}$ in $\pi$ that is order-isomorphic to $\sigma$. Otherwise, $\pi$ is said to \emph{avoid} $\sigma$.  In this context, $\sigma$ is usually called a (subword) \emph{pattern}.  For example, the permutation $\pi=1247653 \in \mathcal{S}_7$ (represented as a \emph{word}) contains two occurrences of the pattern $321$ (corresponding to $765$ and $653$; note that occurrences of a given pattern need not be disjoint), but avoids the pattern  $213$.    Subwords of the form $\pi_i\pi_{i+1}$, where $1\le i\le n-1$ and $\pi_i>\pi_{i+1}$ (resp., $\pi_i<\pi_{i+1}$), are called \emph{descents} (resp., \emph{ascents}).
A set of letters $\pi_i\pi_{i+1}\pi_{i+2}$, where $1\le i\le n-2$,
is said to be a {\em $123$-subword} if $\pi_i<\pi_{i+1}<\pi_{i+2}$,
a {\em $321$-subword} if $\pi_i>\pi_{i+1}>\pi_{i+2}$,
a {\em peak} if $\pi_{i+1}=\max\{\pi_i,\pi_{i+1},\pi_{i+2}\}$, or a {\em valley} if $\pi_{i+1}=\min\{\pi_i,\pi_{i+1},\pi_{i+2}\}$.

In this paper, we will consider an alternative definition of subword containment for permutations obtained by looking at the comparable problem on $\text{Flatten}(\pi)$.  More specifically, we will say from now on that a permutation~$\pi$ contains an occurrence of the subword pattern~$\rho$ in the flattened sense if and only if
$\text{Flatten}(\pi)$ contains the subword $\rho$ in the usual sense (and say~$\pi$ avoids~$\rho$ otherwise).
In the current paper, using this new definition,
we will concentrate on the following three pairs of subword patterns, namely,
descents and ascents, $123$- and $321$-subwords, and peaks and valleys.
We provide a unified approach below for dealing with these patterns and determine in each case an explicit formula for the distribution of the pattern on $\mathcal{S}_n$ in the flattened sense.  Our results may often be expressed in terms of either Eulerian or Chebyshev polynomials.
In some cases, formulas for the number of permutations having a fixed number of occurrences of the pattern in question are also given.  In all cases, we give simple formulas for the average number of occurrences of each pattern, providing both algebraic and combinatorial proofs.

In what follows, we will let $\s$ denote a statistic defined on flattened permutations.
Let $g^\s_n$ denote the polynomial obtained by considering the distribution of the statistic $\s=\s(\text{Flatten}(\pi))$ taken over all permutations $\pi$ of length~$n$, that is
$$g^\s_n=\sum_{\pi \in \mathcal{S}_n}q^{\s(\text{Flatten}(\pi))}.$$
Define the generating function
\[
G^\s(x)=\sum_{n\ge0}g_{n+1}^\s{x^n\over n!}.
\]

In the next section, we determine $G^\s(x)$ for the statistics recording the number of descents and ascents in $\text{Flatten}(\pi)$, making use of the \emph{kernel method} \cite{BBD}, from which one may deduce a formula for $g^\s_n$ in these cases.  In the third section, we consider the comparable question for statistics recording the number of $123$-subwords, $321$-subwords, peaks, and valleys.  We remark that in the case of descents, there are further combinatorial results.  First, it turns out that the descents statistic on permutations of a given length, taken in the flattened sense, has the same distribution as does the statistic on permutations of the same length for the number of descents of size two or more, taken in the usual sense.  We provide a combinatorial proof of this fact by defining a suitable bijection of $\mathcal{S}_n$.  Furthermore, we also consider descents of size $d$ or more on flattened permutations, where $d \geq 1$, and provide a combinatorial proof of an explicit formula for the number of permutations of a given length having a fixed number of such descents.

We will use the following notation throughout this paper.
If $n$ is a positive integer, then let $[n]=\{1,2,\ldots,n\}$, with $[0]=\emptyset$.  If $m$ and $n$ are positive integers, then let $[m,n]=\{m,m+1,\ldots,n\}$ if $m \leq n$, with $[m,n]=\emptyset$ if $m>n$.  Define the characteristic function~$\chi$  by
\[
\chi(P)=\begin{cases}
1,&\text{if $P$ is true};\\
0,&\text{if $P$ is false},
\end{cases}
\]
for any proposition $P$.  Throughout, we let $\theta=1-q$, where $q$ is an indeterminate.

If $n \geq 0$,
then the {\em Eulerian polynomial} $A_n(q)$ is defined by
\[
A_n(q)=\sum_{k=0}^nA_{n,k}q^k
=(1-q)^{n+1}\sum_{j\ge1}j^nq^{j-1},
\]
where $A_{n,k}$ denotes the {\em Eulerian number} which counts the permutations of length~$n$ having exactly $k$ ascents (in the usual sense).
Recall that the generating function of $A_n(q)$ is given by
\[
A(x,q)=\sum_{n\ge0}A_n(q){x^n\over n!}
={1-q\over e^{(q-1)x}-q};
\]
see, for example, Graham et al.~\cite[p. 351]{GKP94}.  (See also Hirzebruch~\cite{Hir08} and Foata~\cite{Foa10} for further information on the Eulerian numbers and polynomials.)
The Eulerian numbers are sometimes defined as
$E_{n,k}=A_{n,k-1}$ when it is more convenient.  Note
the generating function
\begin{equation}\label{gf:E}
E(x,q)=\sum_{n\geq1}\sum_{k\ge1}E_{n,k}q^k\frac{x^n}{n!}
=\frac{q(e^{qx}-e^x)}{qe^x-e^{qx}},
\end{equation}
which will be used in the third section below.

Finally, recall that the Chebyshev polynomials $U_n(x)$ of the second kind (see Rivlin~\cite{R})
are defined by the initial values $U_{-2}(t)=-1$ and $U_{-1}(t)=0$, along with the recurrence
\begin{equation}\label{def:ChebyshevPolynomial}
U_{n}(t)=2t\cdot U_{n-1}(t)-U_{n-2}(t), \qquad n \geq 0.
\end{equation}
It is well-known that
\begin{equation}\label{fm:U}
U_n(t)={(t+\sqrt{t^2-1})^{n+1}-(t-\sqrt{t^2-1})^{n+1}\over2\sqrt{t^2-1}}.
\end{equation}

\section{Counting descents}\label{sec:des}

\subsection{Flattened descents}
We will use a more explicit notation to denote the generating functions under consideration in this section.  Let $F_\rho(n;q|a_1a_2\cdots a_k)$ be the generating function which counts permutations $\pi$ of length $n$ according to the number of occurrences of the subword $\rho$ in $\text{Flatten}(\pi)$ such that the first $k$ elements of the first cycle in the standard form of $\pi$ are $a_1a_2\cdots a_k$. Clearly, $F_{21}(n;q)=F_{21}(n;q|1)$, for all $n\geq1$. Considering whether or not the first cycle contains exactly one element yields
\begin{align}
F_{21}(n;q|1)&=F_{21}(n-1;q)+\sum_{j=2}^n F_{21}(n;q|1j).\label{eqdes1}
\end{align}
From the definitions, we have
\begin{align}
F_{21}(n;q|12)=F_{21}(n-1;q|1)=F_{21}(n-1;q).\label{eqdes2}
\end{align}
Considering whether the first cycle contains two or more elements yields for $j \geq 3$,
\begin{align*}
F_{21}(n;q|1j)&=qF_{21}(n-2;q)+\sum_{k=2}^{j-1}F_{21}(n;q|1jk)+\sum_{k=j+1}^nF_{21}(n;q|1jk)\\
&=qF_{21}(n-2;q)+q\sum_{k=2}^{j-1}F_{21}(n-1;q|1k)+\sum_{k=j}^{n-1}F_{21}(n-1;q|1k)\\
&=qF_{21}(n-2;q)+(q-1)\sum_{k=2}^{j-1}F_{21}(n-1;q|1k)+\sum_{k=2}^{n-1}F_{21}(n-1;q|1k),\\
\end{align*}
which, by \eqref{eqdes1}, implies
\begin{align}
F_{21}(n;q|1j)&=F_{21}(n-1;q)+(q-1)F_{21}(n-2;q)+(q-1)\sum_{k=2}^{j-1}F_{21}(n-1;q|1k), \qquad j \geq 3.\label{eqdes3}
\end{align}

If $n \geq 2$, then let $F_{21}(n;q,v)=\sum_{j=2}^nF_{21}(n;q|1j)v^{j-2}$. Multiplying \eqref{eqdes3} by $v^{j-2}$ and summing over $j=3,4,\ldots,n$, we obtain
\begin{align*}
F_{21}(n;q,v)-F_{21}(n;q|12)&=\frac{v-v^{n-1}}{1-v}F_{21}(n-1;q)+(q-1)\frac{v-v^{n-1}}{1-v}F_{21}(n-2;q)\\
&\quad+\frac{q-1}{1-v}
\sum_{k=2}^{n-1}\Bigl(F_{21}(n-1;q|1k)v^{k-1}-F_{21}(n-1;q|1k)v^{n-1}\Bigr).
\end{align*}
By \eqref{eqdes2}, this may be rewritten as
\begin{align}
F_{21}(n;q,v)&=\frac{1-v^{n-1}}{1-v}F_{21}(n-1;q)+(q-1)\frac{v-v^{n-1}}{1-v}F_{21}(n-2;q)\notag\\
&\quad+\frac{q-1}{1-v}\Bigl(vF_{21}(n-1;q,v)-F_{21}(n-1;q,1)v^{n-1}\Bigr), \qquad n \geq 2,\label{eqdes4}
\end{align}
with $F_{21}(1;q,v)=0$, $F_{21}(2;q,v)=1$, $F_{21}(1;q)=1$ and $F_{21}(2;q)=2$.

Let $H_{21}(t;q,v)=\sum_{n\geq1}F_{21}(n;q,v)t^n$ and $H_{21}(t;q)=\sum_{n\geq1}F_{21}(n;q)t^n$. Multiplying \eqref{eqdes4} by $t^n$ and summing over $n\geq2$, we obtain
\begin{align*}
H_{21}(t;q,v)
&=\frac{t}{1-v}(H_{21}(t;q)-H_{21}(vt;q))+\frac{(q-1)vt^2}{1-v}(H_{21}(t;q)-H_{21}(vt;q))\\
&\quad +\frac{(q-1)t}{1-v}(vH_{21}(t;q,v)-H_{21}(vt;q,1)).
\end{align*}
Note that by virtue of \eqref{eqdes1},
$$H_{21}(t;q,1)=\sum_{n\geq2}\sum_{j=2}^nF_{21}(n;q|1j)t^n=(1-t)H_{21}(t;q)-t.$$
Hence,
\begin{align*}
H_{21}(t;q,v)&=\frac{t}{1-v}(H_{21}(t;q)-H_{21}(vt;q))+\frac{(q-1)vt^2}{1-v}(H_{21}(t;q)-H_{21}(vt;q))\\
&+\frac{(q-1)t}{1-v}(vH_{21}(t;q,v)-(1-vt)H_{21}(vt;q)+vt).
\end{align*}
To solve this functional equation, we make use of the \emph{kernel method} (see \cite{BBD}). Comparing the coefficients of $H_{21}(t;q,v)$ on both sides and solving for $v=v_0$ in terms of $t$ and $q$, we get
$$v_0=\frac{1}{1-(1-q)t}.$$
Setting $v=v_0$ in the above equation implies
\begin{align}
H_{21}(t;q)=\frac{(1-q)t}{1-2(1-q)t}+\frac{q(1-(1-q)t)}{1-2(1-q)t}H_{21}\left(\frac{t}{1-(1-q)t};q\right).\label{eqdes5}
\end{align}
Iterating \eqref{eqdes5} (assuming $|t|$, $|q|<1$) gives
\begin{align*}
H_{21}(t;q)
&=\sum_{j=1}^m
\frac{q^{j-1}(1-\theta t)\theta t}{(1-j\theta t)\bigl(1-(j+1)\theta t\bigr)}
+\frac{q^m(1-\theta t)}{1-(m+1)\theta t}H_{21}\left(\frac{t}{1-m\theta t};q\right)
\end{align*}
for any $m\ge1$.
Letting $m\to\infty$ in the last expression, we see that the second summand tends to zero, which implies
\begin{align*}
H_{21}(t;q)
&=(1-\theta t)
\sum_{j\ge1}q^{j-1}
\biggl({1\over 1-(j+1)\theta t}-{1\over 1-j\theta t}\biggr)
=\theta(1-\theta t)\sum_{j\ge2}{q^{j-2}\over 1-j\theta t}-1.
\end{align*}
Extracting the coefficient of $t^n$ in the last expression yields the following result.

\begin{theorem}\label{thdes}
For any $n\ge1$, we have
\[
F_{21}(n;q)
=(1-q)^{n+1}\sum_{j\geq1}(j-1)j^{n-1}q^{j-2}
={1\over q}\Bigl(A_n(q)+(q-1)A_{n-1}(q)\Bigr),
\]
where $A_n(q)$ is the $n$-th Eulerian polynomial.
Moreover,
\begin{equation}\label{gf:des}
\sum_{n\ge0}F_{21}(n+1;q)\frac{x^n}{n!}
=\frac{(1-q)^2}{(e^{(q-1)x}-q)^2}=A(x,q)^2,
\end{equation}
where $A(x,q)$ is the generating function of the Eulerian polynomials.
\end{theorem}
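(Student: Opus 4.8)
The plan is to read off $F_{21}(n;q)$ from the closed form already obtained for the generating function $H_{21}(t;q)$, and then to recognize the resulting expression in terms of Eulerian polynomials. From the displayed formula
\[
H_{21}(t;q)=\theta(1-\theta t)\sum_{j\ge2}\frac{q^{j-2}}{1-j\theta t}-1,
\]
I would expand each geometric series $\frac{1}{1-j\theta t}=\sum_{n\ge0}j^n\theta^n t^n$ and collect the coefficient of $t^n$. The factor $(1-\theta t)$ contributes the two terms $j^n\theta^n$ and $-j^{n-1}\theta^{n-1}$ at level $t^n$, so that after multiplying by the outer $\theta$ and summing over $j\ge2$ one gets, for $n\ge1$,
\[
F_{21}(n;q)=\theta^{n+1}\sum_{j\ge2}\bigl(j^n-j^{n-1}\bigr)q^{j-2}
=(1-q)^{n+1}\sum_{j\ge1}(j-1)j^{n-1}q^{j-2},
\]
the last equality because the $j=1$ term vanishes. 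This is the first asserted formula; the constant $-1$ in $H_{21}$ only affects the $t^0$ coefficient and is irrelevant for $n\ge1$.

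Next I would convert this sum into Eulerian polynomials using the identity $A_n(q)=(1-q)^{n+1}\sum_{j\ge1}j^nq^{j-1}$ from the introduction. Writing $(j-1)j^{n-1}=j^n-j^{n-1}$ and splitting the sum,
\[
(1-q)^{n+1}\sum_{j\ge1}(j-1)j^{n-1}q^{j-2}
=\frac1q(1-q)^{n+1}\sum_{j\ge1}j^nq^{j-1}
-\frac1q(1-q)\cdot(1-q)^{n}\sum_{j\ge1}j^{n-1}q^{j-1},
\]
and recognizing the two sums as $\frac1qA_n(q)$ and $\frac{1-q}{q}A_{n-1}(q)$ respectively gives
\[
F_{21}(n;q)=\frac1q\Bigl(A_n(q)+(q-1)A_{n-1}(q)\Bigr),
\]
valid for $n\ge1$ (one should check the small cases $n=1,2$ against $F_{21}(1;q)=1$, $F_{21}(2;q)=2$ to make sure the indexing with $A_0(q)=1$ is consistent, which is routine).

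For the exponential generating function in \eqref{gf:des}, I would apply the operator $\sum_{n\ge0}(\cdot)\,x^n/n!$ to the shifted sequence $F_{21}(n+1;q)$ using the closed form $F_{21}(n+1;q)=\frac1q(A_{n+1}(q)+(q-1)A_n(q))$. Since $A(x,q)=\sum_{n\ge0}A_n(q)x^n/n!=\frac{1-q}{e^{(q-1)x}-q}$, the term with $A_n(q)$ contributes $\frac{q-1}{q}A(x,q)$ directly, while the term with $A_{n+1}(q)$ contributes $\frac1q A'(x,q)$ (derivative in $x$), because shifting the index by one on an exponential generating function is differentiation. Thus
\[
G^{\s}(x)=\frac1q\bigl(A'(x,q)+(q-1)A(x,q)\bigr).
\]
A short computation gives $A'(x,q)=\frac{-(1-q)(q-1)e^{(q-1)x}}{(e^{(q-1)x}-q)^2}=\frac{(1-q)^2 e^{(q-1)x}}{(e^{(q-1)x}-q)^2}$, and combining the two pieces over the common denominator $(e^{(q-1)x}-q)^2$,
\[
G^{\s}(x)=\frac1q\cdot\frac{(1-q)^2 e^{(q-1)x}+(q-1)(1-q)(e^{(q-1)x}-q)}{(e^{(q-1)x}-q)^2}
=\frac{(1-q)^2}{(e^{(q-1)x}-q)^2}=A(x,q)^2,
\]
after the numerator simplifies (the $e^{(q-1)x}$ terms cancel in a way that leaves $q(1-q)^2$). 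The main obstacle here is purely bookkeeping: being careful with the sign of $\theta=1-q$ versus $q-1$ and with the off-by-one in the index shift, so that the Eulerian polynomial identities line up exactly; none of the steps requires a genuinely new idea once $H_{21}(t;q)$ is in hand.
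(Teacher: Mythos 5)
Your proposal is correct and follows exactly the paper's route: the paper derives the theorem by the single step of extracting the coefficient of $t^n$ from the closed form $H_{21}(t;q)=\theta(1-\theta t)\sum_{j\ge2}q^{j-2}/(1-j\theta t)-1$ obtained just before via the kernel method, which is precisely the coefficient extraction, Eulerian-polynomial identification, and EGF computation you carry out (your algebra, including the cancellation to $q(1-q)^2$ in the numerator, checks out).
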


\begin{corollary}\label{descor1}
The average number of descents in $\text{Flatten}(\pi)$ over $\pi\in S_n$ is given by $\frac{(n-1)(n-2)}{2n}$.
\end{corollary}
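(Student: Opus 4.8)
The plan is to extract the average as a normalized derivative of the distribution polynomial $F_{21}(n;q)$ at $q=1$. Since $F_{21}(n;q)=\sum_{\pi\in\mathcal{S}_n}q^{\des(\text{Flatten}(\pi))}$, the total number of permutations is $F_{21}(n;1)=n!$, and the total number of descents over all $\pi\in\mathcal{S}_n$ in the flattened sense is $\frac{d}{dq}F_{21}(n;q)\big|_{q=1}$. Hence the average is $\frac{1}{n!}F_{21}'(n;1)$, and it remains to compute this derivative.

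The cleanest route is to differentiate the generating function identity \eqref{gf:des}. Write $\Phi(x,q)=\sum_{n\ge0}F_{21}(n+1;q)\frac{x^n}{n!}=A(x,q)^2$. Then $\frac{\partial}{\partial q}\Phi(x,q)\big|_{q=1}=2A(x,1)\cdot\frac{\partial}{\partial q}A(x,q)\big|_{q=1}$. Since $A(x,1)=\frac{1}{1-x}$ (this is the limit of $\frac{1-q}{e^{(q-1)x}-q}$ as $q\to1$, and also follows from $\sum A_n(1)\frac{x^n}{n!}=\sum \frac{x^n}{1}=\frac{1}{1-x}$), I would compute $B(x):=\frac{\partial}{\partial q}A(x,q)\big|_{q=1}$ by differentiating $\frac{1-q}{e^{(q-1)x}-q}$ directly, or more simply by using the known expansion $A_n'(1)=\frac{n!}{2}\binom{n}{2}\cdot\frac{1}{?}$ — actually the standard fact is that the average number of (ordinary) ascents in $\mathcal{S}_n$ is $\frac{n-1}{2}$, so $A_n'(1)=\frac{(n-1)n!}{2}$, giving $B(x)=\sum_{n\ge0}\frac{n-1}{2}\cdot\frac{x^n}{n!}=\frac{1}{2}\bigl(xe^x-e^x+1\bigr)$ after a short computation, or equivalently $B(x)=\frac{1}{2}\bigl((x-1)e^x+1\bigr)$. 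Then
\[
\frac{\partial}{\partial q}\Phi(x,q)\Big|_{q=1}=2\cdot\frac{1}{1-x}\cdot\frac{(x-1)e^x+1}{2}=\frac{(x-1)e^x+1}{1-x}=-e^x+\frac{1}{1-x}.
\]
Extracting the coefficient of $\frac{x^n}{n!}$ from $-e^x+\frac{1}{1-x}$ gives $-1+n!$, so the total number of flattened descents over $\mathcal{S}_{n+1}$ equals $(n+1)!\bigl(\text{coeff. of }\tfrac{x^n}{n!}\bigr)/?$ — here one must be careful with the shift by one in the index, so I would track indices explicitly: the coefficient of $\frac{x^{n-1}}{(n-1)!}$ in $\frac{\partial}{\partial q}\Phi\big|_{q=1}$ is $(n-1)!-1$, and this equals $F_{21}'(n;1)/?$. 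Dividing by $F_{21}(n;1)=n!$ then yields the average.

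Alternatively — and this is the route I would actually present, since it sidesteps the index bookkeeping — I would work directly from the explicit formula $F_{21}(n;q)=\frac{1}{q}\bigl(A_n(q)+(q-1)A_{n-1}(q)\bigr)$ in Theorem~\ref{thdes}. Differentiating $qF_{21}(n;q)=A_n(q)+(q-1)A_{n-1}(q)$ gives $F_{21}(n;q)+qF_{21}'(n;q)=A_n'(q)+A_{n-1}(q)+(q-1)A_{n-1}'(q)$; setting $q=1$ and using $A_n(1)=n!$, $A_{n-1}(1)=(n-1)!$, $A_n'(1)=\frac{(n-1)n!}{2}$, $A_{n-1}'(1)=\frac{(n-2)(n-1)!}{2}$, and $F_{21}(n;1)=n!$, we get
\[
n!+F_{21}'(n;1)=\frac{(n-1)n!}{2}+(n-1)!,
\]
so $F_{21}'(n;1)=\frac{(n-1)n!}{2}+(n-1)!-n!=\frac{(n-1)n!}{2}-(n-1)(n-1)!=(n-1)!\left(\frac{(n-1)n}{2}-(n-1)\right)=\frac{(n-1)(n-2)}{2}(n-1)!$. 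Dividing by $F_{21}(n;1)=n!$ yields the average $\frac{(n-1)(n-2)}{2n}$, as claimed.

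The only genuinely nontrivial input is the value $A_n'(1)=\frac{(n-1)\,n!}{2}$, equivalently the classical statement that the mean number of ascents of a uniformly random permutation of $[n]$ is $\frac{n-1}{2}$; this is standard (it follows at once from linearity of expectation, since position $i$ is an ascent with probability $\frac12$), so there is no real obstacle. The rest is routine differentiation and evaluation at $q=1$, with the only place demanding care being the consistent handling of the index shift if one works through the generating function \eqref{gf:des} rather than the closed form.
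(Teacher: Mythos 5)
Your second argument --- differentiating $qF_{21}(n;q)=A_n(q)+(q-1)A_{n-1}(q)$ at $q=1$ and feeding in $A_n'(1)=\frac{(n-1)\,n!}{2}$ together with $F_{21}(n;1)=n!$ --- is correct and complete, and it is essentially the paper's computation carried out on the closed form from Theorem~\ref{thdes} rather than on the generating function: the paper instead differentiates \eqref{gf:des} at $q=1$ to obtain $\sum_{n\ge1}\frac{d}{dq}F_{21}(n;q)\mid_{q=1}\frac{x^{n-1}}{(n-1)!}=\frac{x^2}{(1-x)^3}$ and reads off $\frac{d}{dq}F_{21}(n;q)\mid_{q=1}=(n-1)!\binom{n-1}{2}$. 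Your first (generating-function) sketch, however, contains a genuine slip: since the coefficient of $\frac{x^n}{n!}$ in $B(x)=\frac{\partial}{\partial q}A(x,q)\mid_{q=1}$ is $A_n'(1)=\frac{(n-1)\,n!}{2}$ itself, one has $B(x)=\sum_{n\ge1}\frac{n-1}{2}x^n=\frac{x^2}{2(1-x)^2}$, an ordinary power series in disguise, not $\frac12\bigl((x-1)e^x+1\bigr)$ --- you effectively divided $A_n'(1)$ by $n!$ a second time. With the correct $B(x)$ you get $2\cdot\frac{1}{1-x}\cdot\frac{x^2}{2(1-x)^2}=\frac{x^2}{(1-x)^3}$, exactly the paper's intermediate expression, and the index bookkeeping you were worried about resolves itself. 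Since you explicitly designate the second route as the proof you would present, the proposal stands as correct.
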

\begin{proof}
By differentiating the generating function $\sum_{n\geq1}F_{21}(n;q)\frac{x^{n-1}}{(n-1)!}$ in the statement of Theorem \ref{thdes} with respect to $q$ and taking the limit at $q=1$, we obtain
$$\sum_{n\geq1}\frac{d}{dq}F_{21}(n;q)\mid_{q=1}\frac{x^{n-1}}{(n-1)!}=\frac{x^2}{(1-x)^3},$$
which implies
$$\frac{1}{n!}\frac{d}{dq}F_{21}(n;q)\mid_{q=1}=\frac{1}{n}\binom{n-1}{2}=\frac{(n-1)(n-2)}{2n},$$
as required.
\end{proof}

Let us refer to a descent or ascent occurring within $\text{Flatten}(\sigma)$ as a \emph{flattened descent} or \emph{ascent}, respectively, of a permutation $\sigma$.
Since each of the $n-1$ adjacencies within any member of $\mathcal{S}_n$ is either a flattened descent or ascent, the following corollary is immediate from Theorem \ref{thdes}.

\begin{corollary}
For any $n\geq1$, we have
$$F_{12}(n;q)
=q^{n-1}F_{21}(n;q^{-1})
=(q-1)^{n+1}\sum_{j\geq1}(j-1)j^{n-1}q^{-j}.$$
\end{corollary}

A proof similar to before then gives the following result.

\begin{corollary}\label{asccor}
The average number of ascents in $\text{Flatten}(\pi)$ over $\pi\in S_n$ is given by $\frac{(n-1)(n+2)}{2n}$.
\end{corollary}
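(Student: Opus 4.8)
The plan is to mimic the algebraic argument used for Corollary~\ref{descor1}, now starting from the exponential generating function for $F_{12}(n;q)$. By the preceding corollary, $F_{12}(n;q)=q^{n-1}F_{21}(n;q^{-1})$, so from \eqref{gf:des} one obtains $\sum_{n\ge0}F_{12}(n+1;q)x^n/n!=A(qx,q^{-1})^2$, or equivalently one may simply use the relation $F_{12}(n;q)=q^{n-1}F_{21}(n;q^{-1})$ directly at the level of coefficients. The key point is that at $q=1$ every permutation contributes, and since each of the $n-1$ adjacencies of a member of $\mathcal{S}_n$ is either a flattened descent or a flattened ascent, we have the identity $\frac{d}{dq}F_{21}(n;q)\big|_{q=1}+\frac{d}{dq}F_{12}(n;q)\big|_{q=1}=(n-1)n!$ (the total count of adjacencies over all of $\mathcal{S}_n$), because differentiating $q^{\text{des}}$ and $q^{\text{asc}}$ at $q=1$ and adding gives $\text{des}+\text{asc}=n-1$ for each $\pi$.

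First I would record the closed form: dividing the above identity by $n!$ gives the average number of flattened ascents as $(n-1)-\frac{(n-1)(n-2)}{2n}$, using Corollary~\ref{descor1} for the flattened descent average. A short simplification, $(n-1)-\frac{(n-1)(n-2)}{2n}=\frac{(n-1)(2n-(n-2))}{2n}=\frac{(n-1)(n+2)}{2n}$, yields the claimed formula. This is essentially a one-line consequence once the complementarity of descents and ascents among the $n-1$ adjacencies is invoked, so I would present it in that form for brevity.

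Alternatively, for a self-contained algebraic proof paralleling Corollary~\ref{descor1} exactly, I would differentiate $\sum_{n\ge1}F_{12}(n;q)\frac{x^{n-1}}{(n-1)!}$ with respect to $q$ and set $q=1$. Writing $F_{12}(n;q)=q^{n-1}F_{21}(n;q^{-1})$ and applying the product and chain rules at $q=1$ gives $\frac{d}{dq}F_{12}(n;q)\big|_{q=1}=(n-1)F_{21}(n;1)-\frac{d}{dq}F_{21}(n;q)\big|_{q=1}=(n-1)n!-n!\cdot\frac{(n-1)(n-2)}{2n}$, and dividing by $n!$ again produces $\frac{(n-1)(n+2)}{2n}$.

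The only mild obstacle is bookkeeping: one must be careful with the $q^{n-1}$ prefactor and the substitution $q\mapsto q^{-1}$ when differentiating, since $\frac{d}{dq}q^{-1}=-q^{-2}$ contributes a sign, and one should double-check that $F_{12}(n;1)=F_{21}(n;1)=|\mathcal{S}_n|=n!$. Neither point is deep; the structural input is simply that descents and ascents partition the adjacencies, so I expect the proof to be short and the write-up to essentially consist of the simplification $(n-1)-\frac{(n-1)(n-2)}{2n}=\frac{(n-1)(n+2)}{2n}$.
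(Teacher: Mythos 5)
Your proposal is correct and matches the paper's reasoning: the paper itself observes immediately before the corollary that each of the $n-1$ adjacencies is either a flattened descent or a flattened ascent, and its combinatorial proof of Corollary~\ref{asccor} is exactly your subtraction $(n-1)n!-\binom{n-1}{2}(n-1)!=\frac{(n-1)(n+2)}{2}(n-1)!$, while your alternative differentiation of $F_{12}(n;q)=q^{n-1}F_{21}(n;q^{-1})$ at $q=1$ is the ``proof similar to before'' the paper alludes to. No gaps; the bookkeeping you flag ($F_{21}(n;1)=n!$ and the sign from $\frac{d}{dq}q^{-1}$) checks out.
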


If $\pi_1\pi_2\cdots\pi_n$ is a permutation of length~$n$,
then we will call a position~$i\in[n-1]$ a {\em big descent} if $\pi_i-\pi_{i+1}\ge2$.

\begin{theorem}\label{thdes2}
The flattened descent and big descent statistics on $\mathcal{S}_n$ are equally distributed for all $n \geq 1$.
\end{theorem}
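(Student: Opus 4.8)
The plan is to construct an explicit bijection $\varphi\colon\mathcal{S}_n\to\mathcal{S}_n$ that sends the number of flattened descents of $\pi$ to the number of big descents of $\varphi(\pi)$. The natural starting point is the standard cycle form: write $\pi$ as $(c_1^{(1)}c_2^{(1)}\cdots)(c_1^{(2)}\cdots)\cdots$, with cycles ordered by increasing smallest element and each cycle led by that smallest element. Then $\text{Flatten}(\pi)$ is obtained by concatenating the cycle contents. The key structural observation is that within $\text{Flatten}(\pi)$, an adjacency that lies \emph{inside} a single cycle is a descent exactly when the two entries form a descent of the cycle word, whereas an adjacency straddling two consecutive cycles is \emph{always} an ascent, since a cycle ends with some element and the next cycle begins with a new record low among all remaining elements. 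Thus the flattened descents of $\pi$ are precisely the intra-cycle descents, and I would try to encode a permutation in standard cycle form by a word in which these intra-cycle descents become big descents.

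Concretely, I would pass through a fundamental-bijection-type map. Reading $\text{Flatten}(\pi)$ and marking the positions where a new cycle begins (equivalently, marking left-to-right minima of the word $\text{Flatten}(\pi)$), I would build $\varphi(\pi)$ by a relabeling/insertion procedure that converts ``a new-cycle start immediately follows a descent in the cycle word'' into the size-$\ge 2$ gap characteristic of a big descent, while the forced inter-cycle ascents of $\text{Flatten}(\pi)$ get absorbed into adjacencies of size exactly $1$ (small ascents, or small descents that do not count as big). One clean way to organize this: define $\varphi(\pi)$ by inserting the entries $n,n-1,\ldots,1$ one at a time, using the cycle-structure data of $\pi$ to decide, at each step, whether the new (smallest-so-far) entry is placed so as to create a big descent or merely a unit step. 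The recursive ``delete the largest element'' structure on both statistics — flattened descents and big descents each have a transparent behavior under removal of $n$ — should make this insertion map well defined and invertible.

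As an alternative (or a sanity check), one can give a non-bijective verification: show that the distribution of big descents on $\mathcal{S}_n$ has exponential generating function $A(x,q)^2$, matching \eqref{gf:des} from Theorem~\ref{thdes}. Indeed, classical results express the big-descent (equivalently, ``$2$-descent'') polynomial via $\sum_{\pi\in\mathcal{S}_n}q^{\,\#\{i:\pi_i-\pi_{i+1}\ge2\}}$, and a standard ``cluster/insertion'' argument on where $n$ sits relative to $n-1$ yields a recurrence whose generating function is exactly the square of the Eulerian generating function; this pins down the equidistribution. But since the statement is phrased combinatorially and the paper advertises a bijective proof, the bijection above is the intended route.

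The main obstacle is making the bijection's relabeling step genuinely reversible: one must recover, from $\varphi(\pi)$ alone, both the partition of positions into cycles (i.e.\ where the left-to-right minima sat) and the internal order of each cycle, so that every inter-cycle ascent of $\text{Flatten}(\pi)$ is correctly distinguished from the intra-cycle ascents, and every big descent of $\varphi(\pi)$ is correctly attributed to an intra-cycle descent. Verifying that the insertion rule never produces an ambiguous configuration — in particular that a size-$1$ step in $\varphi(\pi)$ is always unambiguously decoded as ``inter-cycle boundary'' or ``intra-cycle ascent'' — is the delicate bookkeeping, and I would handle it by tracking, alongside the partial permutation, the list of current cycle-leaders, checking at each insertion of the next-smallest value that the three relevant quantities (its position, the resulting adjacency sizes, and the updated leader list) determine and are determined by the corresponding choice in $\pi$'s cycle form.
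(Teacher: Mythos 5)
Your proposal rests on the claim that in $\text{Flatten}(\pi)$ every adjacency straddling two consecutive cycles is an ascent, so that the flattened descents are exactly the intra-cycle descents. This is false: the first letter of the next cycle is the smallest element of that cycle (and of all later cycles), but it need not exceed the \emph{last} letter of the preceding cycle, which can be an arbitrary element of that cycle. For instance, $(13)(2)\in\mathcal{S}_3$ flattens to $132$, whose unique descent straddles the cycle boundary; and in the paper's own example $\sigma=(173)(25)(46)(8)$, the flattened word $17325468$ has three descents, two of which ($3>2$ and $5>4$) occur across cycle boundaries. Since your entire strategy is to convert intra-cycle descents into big descents while ``absorbing'' the supposedly forced inter-cycle ascents into unit steps, the plan breaks at its foundation. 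Beyond this, the bijection is never actually constructed: the insertion/relabeling rule is left unspecified, and you yourself flag the invertibility of the decoding as unresolved ``delicate bookkeeping.'' The alternative generating-function route (showing the big-descent polynomial has exponential generating function $A(x,q)^2$, matching \eqref{gf:des}) would indeed suffice, but it is only asserted, not carried out.

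For comparison, the paper's proof avoids any direct matching of cycle structure to big descents. It introduces an encoding set $\mathcal{A}_n$ of sequences of pairs $(a_i,b_i)$ with $|\mathcal{A}_n|=n!$, and builds two insertion bijections $g,h\colon\mathcal{A}_n\to\mathcal{S}_n$ by adding the letters $2,3,\ldots,n$ in \emph{increasing} order (not $n,n-1,\ldots,1$ as you suggest): at the $j$-th step there are exactly $s+2$ sites that do not create a new flattened descent (resp.\ big descent) and $j-1-s$ sites that do, where $s$ is the number of descents already present, so the same alphabet of choices encodes both statistics. The number of $1$'s among the $a_i$ then equals the flattened-descent count of $g(\pi)$ and the big-descent count of $h(\pi)$, and $h\circ g^{-1}$ is the desired bijection. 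To salvage your approach you would need either to redo the structural analysis (flattened descents can and do cross cycle boundaries) or to pass to such a common insertion encoding.
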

\begin{proof}
We will define a Foata style bijection showing this equivalence for all $n$.  To do so, first let $\mathcal{A}_n$, where $n \geq 2$, be the set comprising all sequences of length $n-1$ of ordered pairs $(a_i,b_i)$, $i \in [n-1]$, satisfying the following properties: (i) $a_i \in \{0,1\}$ for all $i$; (ii) $(a_1,b_1)=(0,1)$ or $(0,2)$; (iii) If $i \geq 2$ and $a_i=0$, then let $b_i$ be any member of $[s+2]$, where $s$ denotes the number of $1$'s occurring among the first coordinates of the ordered pairs $(a_1,b_1),(a_2,b_2),\ldots,(a_{i-1},b_{i-1})$; and (iv) If $i \geq 2$ and $a_i=1$, then let $b_i$ be any member of $[i-1-s]$, where $s$ is as in part (iii).  Note that $\mathcal{A}_n=n!$ since there are $i+1$ choices for the $i$-th ordered pair, $1 \leq i \leq n-1$.

We now define a bijection $g$ between $\mathcal{A}_n$ and $\mathcal{S}_n$ such that the number of $1$'s among the first coordinates of $\pi \in \mathcal{A}_n$ corresponds to the number of flattened descents within $g(\pi)$.  To create $g(\pi)$ from $$\pi=\{(a_1,b_1),(a_2,b_2),\ldots,(a_{n-1},b_{n-1})\}\in \mathcal{A}_n,$$ we first write $1$ in a cycle by itself and call this permutation $\sigma_1$.  We then subsequently add members of $[n]-\{1\}$ using $\pi$ as an encoding and create a recursive sequence of permutations $\sigma_2, \sigma_3,\ldots, \sigma_n$ as follows.  If $a_j=0$, where $j \in [n-1]$, then insert $j+1$ either within a cycle of $\sigma_{j}$ so that it goes in between any two letters comprising a descent of $\text{Flatten}(\sigma_j)$ (with the $\ell$-th such descent from left-to-right selected for the insertion site if $b_j=\ell \in [s]\subseteq[s+2]$, assuming that there are $s$ $1$'s appearing as first coordinates within the first $j-1$ ordered pairs of $\pi
 $, with $s=0$ if $j=1$) or at the end of the last cycle of $\sigma_j$ (if $b_j=s+1$) or as the $1$-cycle $(j+1)$ (if $b_j=s+2$).  If $a_j=1$, then insert $j+1$ into $\sigma_j$ within one of its cycles so that it goes in between two letters that create an ascent within $\text{Flatten}(\sigma_j)$ (where one selects the particular ascent in which to insert $j+1$ from left to right based off of the value of $b_j$ as before).  In either case, we let $\sigma_{j+1}$ denote the permutation of $[j+1]$ that results after $j+1$ has been inserted into $\sigma_j$.  After adding all of the letters from $[n]-\{1\}$ in this way, the permutation $\sigma_n$ results, which we define to be $g(\pi)$.  It may be verified that $g$ is the desired bijection between the sets $\mathcal{A}_n$ and $\mathcal{S}_n$.

For example, if $n=8$ and $$\pi=\{(0,2),(1,1),(0,3),(1,2),(0,3),(1,1),(0,5)\},$$
then we get
\begin{align*}
(1)&\rightarrow(1)(2)\rightarrow(13)(2)\rightarrow(13)(2)(4)\rightarrow(13)(25)(4)\rightarrow(13)(25)(46)\\
&\rightarrow(173)(25)(46)\rightarrow(173)(25)(46)(8),
\end{align*}
and thus $g(\pi)=(173)(25)(46)(8)$.

We next define a bijection $h$ between $\mathcal{A}_n$ and $\mathcal{S}_n$ in which the number of $1$'s occurring among the first coordinates of $\pi \in \mathcal{A}_n$, represented as above, corresponds to the number of big descents in $h(\pi)$. Let $\rho_1$ denote the permutation of length one.   We subsequently add members of $[n]-\{1\}$ using $\pi$ as an encoding to generate a sequence of permutations $\rho_2,\rho_3,\ldots,\rho_n$ (represented as words) as follows.  If $a_j=0$, where $j \in [n-1]$, then insert the letter $j+1$ into the permutation $\rho_j$ either between any two letters comprising a big descent or just before $j$ or at the very end (letting the value of $b_j$ dictate the action taken here, much as before).  If $a_j=1$, then we consider cases on whether or not the letter $j$ starts $\rho_j$.  If $j$ starts $\rho_j$, then insert $j+1$ in between any two letters of $\rho_j$ which do not form a big descent within $\rho_j$.  If $j$ does not start $\rho_j$, the
 n either add $j+1$ at the beginning or in between any two letters which do not form a big descent, except directly before $j$.  In either case, let the value of $b_j$ determine the position chosen for this insertion going from left to right as $b_j$ increases.  Let $h(\pi)=\rho_n$.  It may be verified that $h$ is the desired bijection.

For instance, if $\pi$ is as in the previous example, then we get
\begin{align*}
1\rightarrow 12 \rightarrow 312 \rightarrow 3124 \rightarrow 31524 \rightarrow 316524\rightarrow 7316524 \rightarrow 73165248,
\end{align*}
and thus $h(\pi)=73165248$.

The composition $h\circ g^{-1}$ then provides a bijection of $\mathcal{S}_n$ showing the equivalence of the flattened descent and big descent statistics.  Using the previous examples, if $\sigma=75162438=(173)(25)(46)(8)\in \mathcal{S}_8$, which has three flattened descents, then $h\circ g^{-1}(\sigma)=73165248$, which has three big descents. \end{proof}

\subsection{$d$-Descents}

Suppose $d \geq 1$ and $\sigma \in \mathcal{S}_n$, with $\text{Flatten}(\sigma)=\sigma_1\sigma_2\cdots\sigma_n$.  We will call an index $i$ such that $\sigma_i-\sigma_{i+1}\geq d$ a (flattened) $d$-descent.  For example, if $d=3$ and $\sigma=(1985)(24)(367) \in \mathcal{S}_9$, then there are two $3$-descents (at positions $3$ and $4$) and four descents altogether. Let $a_{n,m,k}=a_{n,m,k}(d)$ denote the number of permutations of $[n]$ having exactly $m$ $d$-descents and $k$ cycles.
Note that if $m>0$, then $a_{n,m,k}$ is non-zero only when $n \geq m+d+1$.

Let $c_{n,k}$ denote the signless Stirling number of the first kind which counts the permutations of $[n]$ having exactly $k$ cycles.  Note that if $1 \leq n \leq d+1$, then
$$a_{n,m,k}=\begin{cases}
c_{n,k},&\text{if }m=0;\\
0,&\text{if }m>0,
\end{cases}$$
as there is no restriction on the positions of the letters. If $n=d+2$, then $a_{d+2,1,k}=c_{d+1,k}$ and thus $a_{d+2,0,k}=c_{d+2,k}-c_{d+1,k}$, since the only possible $d$-descent in this case occurs with the letter $d+2$ coming just before $2$ once the permutation is flattened.  The following proposition provides a recurrence for $a_{n,m,k}$ when $n \geq d+3$.

\begin{proposition}\label{desp1}
If $n \geq d+3$ and $k \geq 1$, then
\begin{equation}\label{desp1e1}
a_{n,m,k}=a_{n-1,m,k-1}+(m+d)a_{n-1,m,k}+(n-m-d)a_{n-1,m-1,k}, \qquad m\geq0,
\end{equation}
where $a_{n,-1,k}=0$ and $a_{n,m,0}=0$ if $n>0$.
\end{proposition}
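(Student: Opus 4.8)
The plan is to establish the recurrence by a direct combinatorial argument, classifying permutations of $[n]$ counted by $a_{n,m,k}$ according to the behavior of the largest letter~$n$ when the permutation is written in standard cycle form and then flattened. Since $n \geq d+3$, the letter $n$ is never the smallest element of a cycle (as $1$ always is, and $n>1$), so~$n$ appears in the flattened word $\sigma_1\sigma_2\cdots\sigma_n$ in some interior or final position, always preceded by a letter. Removing~$n$ from $\sigma$ yields a permutation $\sigma'$ of $[n-1]$ in standard cycle form (erasing~$n$ does not disturb the ascending order of smallest cycle elements nor the property that the smallest element leads its cycle), and conversely $\sigma$ is recovered from $\sigma'$ by choosing a cycle of $\sigma'$ and a slot within it (or opening a new $1$-cycle $(n)$, which is impossible here since that cycle would not be last unless... actually it always would be last, but $n$ as a singleton contributes no $d$-descent and changes the cycle count).

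I would organize the count by where $n$ is inserted into $\sigma'=\sigma'_1\cdots\sigma'_{n-1}$ (flattened), i.e.\ into one of the $n-1$ "gaps'' that sit immediately after a letter, or as its own new cycle. First, inserting $n$ as the singleton cycle $(n)$ appended at the end increases the number of cycles by one and creates no new $d$-descent (since $n$ is last), giving the term $a_{n-1,m,k-1}$. Second, inserting $n$ into an existing slot right after some letter~$\sigma'_i$: this never destroys an existing $d$-descent except possibly the one at position~$i$ in $\sigma'$ (if $\sigma'_i-\sigma'_{i+1}\ge d$), which gets "split'' by $n$; but since $n$ is the largest letter, $n-\sigma_{i+1}\ge d$ always holds after insertion, so that descent is replaced rather than lost. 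The new pair $(\sigma'_i,n)$ is an ascent, so it is never a $d$-descent. Thus inserting~$n$ after $\sigma'_i$ changes the number of $d$-descents by $+1$ exactly when position~$i$ was \emph{not} already a $d$-descent of $\sigma'$, and leaves it unchanged when it \emph{was}. Hence, among the $n-1$ slots of a fixed $\sigma'$ with exactly $j$ $d$-descents, $j$ of them preserve the count and $n-1-j$ of them raise it by one. Summing: permutations $\sigma$ with $m$ $d$-descents and $k$ cycles arising by slot-insertion come either from a $\sigma'$ already having $m$ $d$-descents (contributing a factor equal to its number of $d$-descents, $m$, ... but here one must be careful that the "$d$'' extra slots at the very front/end also matter).

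The subtle point — and the main obstacle — is getting the coefficients $m+d$ and $n-m-d$ exactly right, which forces a careful accounting of the insertion sites that lie at a cycle boundary or at the extreme left of a cycle, where the naive "$n-1$ slots'' picture must be corrected. Concretely, inserting $n$ immediately before the first element of a non-initial cycle, or immediately after the last element of a cycle, interacts with the flattened adjacency across the cycle bar; and the insertion just after the leading~$1$ or at analogous protected positions cannot create a $d$-descent for structural reasons, which is exactly where the "$+d$'' shift comes from (there being essentially $d$ "safe'' slots near the top that behave like the $d$-descent-preserving slots). I would therefore: (1) set up the slot/gap bijection precisely, including the empty-case conventions $a_{n,-1,k}=0$ and $a_{n,m,0}=0$; (2) prove that a slot-insertion after $\sigma'_i$ raises the $d$-descent count iff $\sigma'_i - \sigma'_{i+1} < d$ (with the convention that the "virtual'' right-neighbor across a cycle boundary or at the word's end makes such a slot automatically one of the $d$ safe slots); (3) count that each $\sigma'$ with $m$ $d$-descents has exactly $m+d$ safe slots and $(n-1)-(m+d) = n-1-m-d$ raising slots, while each $\sigma'$ with $m-1$ $d$-descents contributes through its $(n-1)-(m-1+d)=n-m-d$ raising slots; (4) add the new-singleton-cycle term $a_{n-1,m,k-1}$. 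Collecting the three contributions gives $a_{n,m,k}=a_{n-1,m,k-1}+(m+d)a_{n-1,m,k}+(n-m-d)a_{n-1,m-1,k}$, as claimed. I expect step~(3), the precise enumeration of the $d$ "safe'' slots near the top of the cycle structure, to be the crux; verifying it against the base cases $n=d+2$ and the example $d=3$, $\sigma=(1985)(24)(367)$ should serve as a useful sanity check.
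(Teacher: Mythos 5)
Your overall decomposition is the same as the paper's: remove/insert the largest letter $n$, with the new-singleton-cycle case giving $a_{n-1,m,k-1}$ and the $n-1$ within-cycle slots split into ``safe'' and ``raising'' ones. But there is a genuine gap at exactly the point you flag as the crux, and the partial reasoning you offer for it points in the wrong direction. Your step (2) asserts that inserting $n$ after $\sigma'_i$ raises the $d$-descent count iff $\sigma'_i-\sigma'_{i+1}<d$. That criterion is incomplete: after insertion the only new adjacency that can be a $d$-descent is $(n,\sigma'_{i+1})$, which is one iff $n-\sigma'_{i+1}\ge d$, i.e.\ iff $\sigma'_{i+1}\le n-d$. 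So the count is raised iff position $i$ was \emph{not} already a $d$-descent \emph{and} $\sigma'_{i+1}\notin[n-d+1,n-1]$. The two exceptional sets are disjoint (a $d$-descent onto a letter $\ge n-d+1$ would force $\sigma'_i\ge n+1$), and since $n\ge d+3$ none of the $d-1$ letters of $[n-d+1,n-1]$ is the initial $1$, so they account for exactly $d-1$ safe slots; together with the one slot at the end of the last cycle (no right neighbor) this gives the $m+(d-1)+1=m+d$ safe slots and, by subtraction, $(n-1)-(m-1+d)=n-m-d$ raising slots.

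In other words, the $d$ extra safe slots have nothing to do with cycle boundaries, with the slot after the leading $1$, or with positions ``near the top of the cycle structure'' as your sketch suggests; they are precisely the slots immediately preceding one of the $d-1$ largest letters of $[n-1]$, plus the terminal slot. (A slot at the end of a non-final cycle is just the slot preceding the opening letter of the next cycle in the flattened word, and whether it is safe depends only on the size of that letter.) With this one observation supplied, the rest of your argument closes up and coincides with the paper's proof; without it, step (3) is asserted rather than proved.
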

\begin{proof}
The first term on the right-hand side of \eqref{desp1e1} counts the permutations enumerated by $a_{n,m,k}$ in which $n$ occurs in a cycle by itself, while the second term counts those in which $n$ creates neither an additional $d$-descent nor cycle when it is added to a permutation of length $n-1$.  Note that the latter may be achieved only by adding $n$ between two letters comprising a $d$-descent or directly preceding some member of $[n-d+1,n-1]$ or at the end of the last cycle, whence there are $m+d$ choices altogether.  Finally, if $n$ is to create a new $d$-descent, then it must be inserted between two letters not comprising a $d$-descent, but not directly preceding a member of $[n-d+1,n-1]$.  For this, there are $(n-2)-(m-1)-(d-1)=n-m-d$ choices for the position of $n$, by subtraction, which completes the proof.
\end{proof}

Let $a_{n,m}=\sum_{k}a_{n,m,k}$ denote the number of permutations of size $n$ having exactly $m$ $d$-descents.  Summing \eqref{desp1e1} over $k$ implies
\begin{equation}\label{desp1e1a}
a_{n,m}=(m+d+1)a_{n-1,m}+(n-m-d)a_{n-1,m-1}, \qquad n>m \geq 0.
\end{equation}

Using \eqref{desp1e1a}, it is possible to show by induction the following explicit formula for $a_{n,m}$.

\begin{proposition}\label{desp2}
If $d \geq 1$, $m \geq 0$, and $n \geq m+d+1$, then
$$a_{n,m}=(d+1)!\sum_{\substack{i_1+i_2+\cdots+i_{m+1}=n-1-d-m\\
i_j\geq0}}(i_1+1)(i_1+i_2+1)\cdots(i_1+i_2+\cdots +i_m+1)\prod_{j=1}^{m+1}(d+j)^{i_j}.$$
\end{proposition}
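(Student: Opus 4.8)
The plan is to argue by induction on $n$, using the recurrence \eqref{desp1e1a}. Write $S(n,m)$ for the right-hand side of the asserted identity, so that the claim is $a_{n,m}=S(n,m)$ for all $n\ge d+1$ and $0\le m\le n-d-1$; adopt the convention $S(n,m)=0$ when $n-1-d-m<0$ (an empty sum over compositions). Since \eqref{desp1e1a} reads $a_{n,m}=(m+d+1)a_{n-1,m}+(n-m-d)a_{n-1,m-1}$, the whole argument reduces to proving that $S$ satisfies the \emph{same} recurrence,
\[
S(n,m)=(m+d+1)S(n-1,m)+(n-m-d)S(n-1,m-1),
\]
together with the correct small-$n$ values. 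Granting this, the induction closes routinely: for $n\ge d+3$ one rewrites $a_{n,m}$ via \eqref{desp1e1a}, uses the inductive hypothesis to replace $a_{n-1,m}$ and $a_{n-1,m-1}$ by $S(n-1,m)$ and $S(n-1,m-1)$, and then applies the $S$-recurrence to recover $S(n,m)$. The only mild point is at the extremes: when $m=n-d-1$ one has $a_{n-1,m}=0=S(n-1,m)$ (its parameter $(n-1)-1-d-m=-1$ is negative), and when $m=0$ one has $a_{n-1,-1}=0=S(n-1,-1)$, so both edge terms behave identically on the two sides.

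For the base, I would check $n=d+1$ and $n=d+2$ directly against the remarks immediately preceding Proposition \ref{desp1}. For $n=d+1$ the only relevant value is $m=0$, and $a_{d+1,0}=\sum_kc_{d+1,k}=(d+1)!=S(d+1,0)$. For $n=d+2$ we have $a_{d+2,1}=\sum_kc_{d+1,k}=(d+1)!=S(d+2,1)$ and $a_{d+2,0}=(d+2)!-(d+1)!=(d+1)!(d+1)$, the latter matching $S(d+2,0)$ since the sole composition contributing there is $i_1=1$.

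The heart of the proof is the recurrence for $S(n,m)$, which I would obtain by splitting its defining sum according to the value of the last part $i_{m+1}$. Put $N=n-1-d-m$. On the terms with $i_{m+1}\ge1$, substitute $i_{m+1}=i'_{m+1}+1$: the partial-sum factors $i_1+\cdots+i_\ell+1$ with $\ell\le m$ are untouched, the product $\prod_{j=1}^{m+1}(d+j)^{i_j}$ picks up a factor $m+d+1$, and the residual sum is exactly the defining sum of $S(n-1,m)$ — its parameter has dropped to $N-1=(n-1)-1-d-m$ — so this part equals $(m+d+1)S(n-1,m)$. On the terms with $i_{m+1}=0$, one sums over compositions $i_1+\cdots+i_m=N$ into $m$ nonnegative parts, and the top partial-sum factor is the constant $i_1+\cdots+i_m+1=N+1=n-m-d$; pulling it out, the remaining sum is precisely the defining sum of $S(n-1,m-1)$ — same $N$, one fewer part, one fewer partial-sum factor — so this part equals $(n-m-d)S(n-1,m-1)$. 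Adding the two contributions yields the desired recurrence.

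I expect the only real difficulty to be bookkeeping rather than ideas: $S(n,m)$ is built from $m+1$ summation variables but only $m$ partial-sum factors, and one must confirm that ``decrease $i_{m+1}$ by one'' corresponds to the shift $(n,m)\mapsto(n-1,m)$ while ``delete the $i_{m+1}=0$ slice and absorb the top factor $N+1$'' corresponds to $(n,m)\mapsto(n-1,m-1)$. Verifying these two identifications carefully — and checking the boundary indices $m=0$ and $m=n-d-1$ so that the $S$-recurrence lines up with \eqref{desp1e1a} across the entire claimed range — is where the care is required.
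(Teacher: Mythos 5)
Your proof is correct, but it takes a genuinely different route from the paper's. The paper itself remarks, just before the proposition, that the formula ``is possible to show by induction'' from \eqref{desp1e1a}, and then deliberately supplies a direct combinatorial proof instead: one first writes $[d+1]$ in standard cycle form in $(d+1)!$ ways, then inserts the letters of $[d+2,n]$ one at a time, calling a letter a \emph{producer} if its insertion creates a new $d$-descent; the vector $(i_1,\ldots,i_{m+1})$ records the numbers of non-producers between consecutive producers, the factor $(d+j)^{i_j}$ counts the admissible placements of the non-producers in the $j$-th gap, and $i_1+\cdots+i_j+1$ counts the admissible placements of the $j$-th producer. Your argument is precisely the inductive alternative the paper alludes to: you verify that the right-hand side $S(n,m)$ satisfies $S(n,m)=(m+d+1)S(n-1,m)+(n-m-d)S(n-1,m-1)$ by splitting on whether $i_{m+1}\ge1$ (shift down by one and extract a factor $m+d+1$, landing on $S(n-1,m)$ since its parameter is $N-1$) or $i_{m+1}=0$ (the top partial-sum factor is the constant $N+1=n-m-d$ and the residual sum is $S(n-1,m-1)$, whose parameter is again $N$), and you check the bases $n=d+1,d+2$ and the boundary indices $m=0$ and $m=n-d-1$; I confirmed all of these computations, including the convention that $S$ vanishes when its composition parameter is negative or when $m=-1$. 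The trade-off is the expected one: your induction is shorter and essentially mechanical once \eqref{desp1e1a} is available, whereas the paper's bijective construction explains why each factor in the formula appears and is independent of Proposition \ref{desp1}.
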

\begin{proof}
We provide a combinatorial proof by showing that the right-hand side counts the permutations of length $n$ having exactly $m$ $d$-descents, the set of which we'll denote here by $S_{n,m}$.  To form a member of $S_{n,m}$, where $n \geq m+d+1$, we first write the members of $[d+1]$ as any permutation in standard cycle form, of which there are $(d+1)!$ possibilities.  Observe that there is no restriction on the positions of these letters since none of them may be the larger letter in a $d$-descent (note that $d+1$ cannot precede $1$ after a permutation has been flattened). Once the positions for the members of $[d+1]$ have been determined, we subsequently insert the members of $[d+2,n]$ to form a permutation of size $n$ in standard cycle form and classify each member according to whether or not it created an additional $d$-descent in the step at which it was inserted.  More specifically, we'll call $r \in [d+2,n]$ a \emph{producer} if $r$ produces an additional $d$-descent when
 inserted into the current permutation involving the letters in $[r-1]$  and a \emph{non-producer} if no additional $d$-descent is produced.  Note than in forming any member of $S_{n,m}$, there will be exactly $m$ producers among the elements of $[d+2,n]$ and thus $n-1-d-m$ non-producers.

Let $i_j$, $2 \leq j \leq m$, denote the number of non-producers between the $(j-1)$-st and $j$-th producers (with $i_1$ denoting the number of non-producers prior to the first producer and $i_{m+1}$ the number following the $m$-th producer).  Then the sets comprising the producers and the non-producers within a member of $S_{n,m}$ are uniquely determined by the vector $(i_1,i_2,\ldots,i_{m+1})$, where $i_1+i_2+\cdots+i_{m+1}=n-1-d-m$.  To complete the proof, it suffices to show that the number of permutations $\sigma \in S_{n,m}$ having fixed vector $(i_1,i_2,\ldots,i_{m+1})$ is given by $$(i_1+1)(i_1+i_2+1)\cdots(i_1+i_2+\cdots +i_m+1)\prod_{j=1}^{m+1}(d+j)^{i_j}$$ once the positions of the elements of $[d+1]$ have been specified.

Let $p_j$ denote the $j$-th producer within such a permutation $\sigma$.  Note first that
$$p_j=i_1+i_2+\cdots+i_j+j+d+1, \qquad 1 \leq j \leq m.$$
If $t$ is a non-producer coming between the $(j-1)$-st and $j$-th producers of $\sigma$, i.e., if $t \in [p_{j-1}+1,p_j-1]$, then $t$ must be inserted so that it either (i) goes between two consecutive letters comprising a current $d$-descent, (ii) precedes directly any letter in $[t-d+1,t-1]$, (iii) occurs at the end of the last cycle in the present permutation, or (iv) occurs as the $1$-cycle $(t)$.  In all, there are $(j-1)+(d-1)+2=j+d$ options regarding the placement of such $t$ and $p_j-p_{j-1}-1=i_j$ possible $t$, which implies that there are $(d+j)^{i_j}$ choices concerning the placement of all letters occurring between $p_{j-1}$ and $p_j$, $2 \leq j \leq m$.  Similarly, there are $(d+1)^{i_1}$ choices for the non-producers preceding $p_1$ and $(d+m+1)^{i_{m+1}}$ choices for those following $p_m$.  Finally, concerning the placement of the $j$-th producer $p_j$, note that it must be inserted between two members of $[p_{j}-1]$ but not between two letters that comprise a
 current $d$-descent or directly preceding some member of $[p_j-d+1,p_j-1]$.  Thus, there are $$(p_{j}-2)-(j-1)-(d-1)=i_1+i_2+\cdots+i_j+1$$
choices regarding the placement of the $j$-th producer for all $j$, which completes the proof.
\end{proof}

Noting that $n!=\sum_{m}a_{n,m}$ gives the following formula.

\begin{corollary}
If $d \geq 1$ and $n \geq d+1$, then
$$\frac{n!}{(d+1)!}=\sum_{m=0}^{n-d-1}\sum_{\substack{i_1+i_2+\cdots+i_{m+1}=n-1-d-m\\
i_j\geq0}}\prod_{j=1}^{m+1}(d+j)^{i_j}\prod_{j=1}^m\left(1+\sum_{\ell=1}^ji_\ell\right).$$
\end{corollary}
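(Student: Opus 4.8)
The plan is to read off this identity directly from Proposition~\ref{desp2} by summing over every possible number of $d$-descents. Since $a_{n,m}$ is the number of permutations of $[n]$ having exactly $m$ $d$-descents and each permutation of $[n]$ contributes to exactly one value of $m$, we have $\sum_{m\ge0}a_{n,m}=n!$. To pin down the exact range of the outer sum, recall from the paragraph preceding Proposition~\ref{desp1} that, for $m>0$, the quantity $a_{n,m}$ is nonzero only when $n\ge m+d+1$, i.e.\ $m\le n-d-1$, while the term $m=0$ is also covered by the formula of Proposition~\ref{desp2} whenever $n\ge d+1$. Hence, under the hypothesis $n\ge d+1$, we may write $n!=\sum_{m=0}^{n-d-1}a_{n,m}$.

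Next I would substitute the closed form for $a_{n,m}$ provided by Proposition~\ref{desp2}. Every summand there carries the common factor $(d+1)!$, so dividing both sides by $(d+1)!$ produces
\[
\frac{n!}{(d+1)!}=\sum_{m=0}^{n-d-1}\sum_{\substack{i_1+i_2+\cdots+i_{m+1}=n-1-d-m\\ i_j\ge0}}(i_1+1)(i_1+i_2+1)\cdots(i_1+i_2+\cdots+i_m+1)\prod_{j=1}^{m+1}(d+j)^{i_j}.
\]
Finally, rewriting the product as $(i_1+1)(i_1+i_2+1)\cdots(i_1+i_2+\cdots+i_m+1)=\prod_{j=1}^{m}\bigl(1+\sum_{\ell=1}^{j}i_\ell\bigr)$, with the usual convention that an empty product (the case $m=0$) equals $1$, yields exactly the claimed identity.

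There is essentially no genuine obstacle here; the argument is a one-line consequence of Proposition~\ref{desp2}. The only points deserving a moment's care are the verification that the outer index $m$ runs precisely over $\{0,1,\dots,n-d-1\}$ and the check that the $m=0$ term is self-consistent: it forces $i_1=n-1-d$ and contributes $(d+1)^{\,n-1-d}$, which indeed equals $a_{n,0}/(d+1)!$. As an alternative, one could instead prove the identity purely combinatorially by enumerating all of $\mathcal{S}_n$ via the producer/non-producer encoding used in the proof of Proposition~\ref{desp2}, without first conditioning on the number $m$ of producers; but invoking Proposition~\ref{desp2} is by far the most economical route.
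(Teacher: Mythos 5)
Your proposal is correct and matches the paper exactly: the corollary is obtained by summing the explicit formula of Proposition~\ref{desp2} over all $m$ (using $n!=\sum_m a_{n,m}$) and dividing by $(d+1)!$. The extra care you take with the range of $m$ and the $m=0$ term is sound but not a departure from the paper's one-line argument.
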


Finally, it is possible to extend Theorem \ref{thdes2} as follows using the combinatorial argument given above for it, the details of which we leave to the interested reader.

\begin{proposition}\label{desp3}
The statistic which records the number of flattened descents of size $d$ or more has the same distribution on $\mathcal{S}_n$ as does the one recording the number of typical descents of size $d+1$ or more for all $n,d \geq 1$.
\end{proposition}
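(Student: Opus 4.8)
The plan is to extend the Foata-style bijection used to prove Theorem~\ref{thdes2} (the case $d=1$), reusing the insertion bookkeeping already carried out in the proofs of Propositions~\ref{desp1} and~\ref{desp2}. For $n\ge1$, let $\mathcal{A}_n=\mathcal{A}_n(d)$ be the set of sequences $\{(a_1,b_1),\ldots,(a_{n-1},b_{n-1})\}$ with $a_i\in\{0,1\}$ such that $a_i=0$ whenever $i\le d$, and such that, writing $s$ for the number of $1$'s among $a_1,\ldots,a_{i-1}$, the entry $b_i$ ranges over the ``non-producing'' slots (of which there are $s+d+1$ for $i\ge d$, and $i+1$ for $i<d$, the latter forcing $a_i=0$) if $a_i=0$, and over the remaining $i-d-s$ ``producing'' slots if $a_i=1$. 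Since there are always $i+1$ choices for the $i$-th pair, $|\mathcal{A}_n|=n!$, and the number of $1$'s among the first coordinates is governed on $\mathcal{A}_n$ by exactly the recurrence~\eqref{desp1e1a} (summed over $k$) defining $a_{n,m}$.

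First I would construct a bijection $g\colon\mathcal{A}_n\to\mathcal{S}_n$ under which the number of $1$'s among the first coordinates of $\pi$ equals the number of flattened $d$-descents of $g(\pi)$: starting from the $1$-cycle $(1)$ and inserting $2,3,\ldots,n$ successively, if $a_j=0$ then $j+1$ is placed into one of the $s+d+1$ non-producing slots of the current standard-cycle-form permutation---inside a current $d$-descent, directly before a member of $[j-d+2,j]$, at the end of the last cycle, or as the new $1$-cycle $(j+1)$---while if $a_j=1$ then $j+1$ is placed into one of the $j-d-s$ complementary slots, with $b_j$ selecting the slot from left to right. This is precisely the analysis underlying Proposition~\ref{desp1}, so $g$ is a well-defined bijection with the stated property.

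Next I would construct a bijection $h\colon\mathcal{A}_n\to\mathcal{S}_n$, with permutations written as words, under which the number of $1$'s among the first coordinates of $\pi$ equals the number of descents of size $d+1$ or more of $h(\pi)$. The parallel local analysis is: inserting the current largest letter $j+1$ directly before a letter $y$ of a word of $[j]$ raises the count of $(d+1)$-descents exactly when $y\le j-d$ and the chosen gap is not already a $(d+1)$-descent, and never when $j+1$ goes in the rightmost gap; hence there are $s+d+1$ non-producing gaps (inside a current $(d+1)$-descent, directly before a member of $[j-d+1,j]$, or at the very end) and $j-d-s$ producing gaps, again matching~\eqref{desp1e1a}. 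As with the map $h$ in the proof of Theorem~\ref{thdes2}, a short case distinction---here according to whether the first letter of the current word belongs to $[j-d+1,j]$---is needed to classify the leftmost gap as producing or non-producing, and thereby to pin down $b_j$ and make the step reversible. The composition $h\circ g^{-1}$ is then the desired bijection of $\mathcal{S}_n$ sending flattened $d$-descents to typical $(d+1)$-descents.

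The main obstacle will be verifying that $g$ and, especially, $h$ are genuine bijections: one must check that at each stage the inverse insertion step is uniquely recoverable, which requires careful treatment of the boundary cases with $j\le d$ (where no producer is possible) and of the coincidences among the slot descriptions for $h$---the leftmost gap may coincide with a ``before a member of $[j-d+1,j]$'' gap or else be a producing gap, precisely the dichotomy resolved by the case distinction above. Once $h$ is established, the proposition follows at once. A shorter, non-bijective alternative would be to verify directly that the number of words of $[n]$ with exactly $m$ descents of size $d+1$ or more satisfies the same recurrence~\eqref{desp1e1a} and the same initial values as $a_{n,m}$.
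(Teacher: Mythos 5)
Your proposal is correct and is essentially the proof the paper has in mind: the paper gives no details for Proposition~\ref{desp3}, merely asserting that the Foata-style argument of Theorem~\ref{thdes2} extends, and your outline carries out exactly that extension, with the slot counts $s+d+1$ and $j-s-d$ correctly matched on both sides to the recurrence~\eqref{desp1e1a} and with the right case distinction for the leftmost gap in the word-insertion map $h$.
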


\section{Counting $123$-subwords, $321$-subwords, peaks and valleys}\label{sec:3}

In this section, we count the occurrences of $123$-subwords, $321$-subwords,
peaks, and valleys within flattened permutations of length~$n$.
We will use the notation $\rr$, $\dd$, $\pp$ and $\vv$ to stand for these respective patterns.
In the unified approach described below,
we use the notation $\s$ to represent any one of these statistics.
Define
\[
g_n^\s(a_1a_2\cdots a_k)
=\sum_{\pi}q^{\s(\text{Flatten}(\pi))},
\]
where $\pi$ ranges over all the permutations of length~$n$ such that $\text{Flatten}(\pi)$ starts with the letters $a_1a_2\cdots a_k$. It is clear that $g_n^\s(a_1a_2\cdots a_k)= 0$ if $a_1\ne1$.
Write $g_n^\s=g_n^\s(1)$ for short.
For example, the notation $g_n^\rr$ represents the distribution
for the number of $123$-subwords in flattened permutations of length~$n$. By definition, we have
\begin{equation}\label{rec:g}
g_n^\s(a_1a_2\cdots a_k)
=\sum_{h\in[n]\backslash\{a_1,a_2,\ldots,a_k\}}g_n^\s(a_1a_2\cdots a_kh)
\end{equation}
for any $1 \leq k \leq n-1$.

Since the statistics $\rr,\dd,\pp,\vv$ involve patterns of length three,
we consider $k=4$ in~(\ref{rec:g}). This leads us to find all of the ~$g^\s_n(1ijk)$.
The following lemma allows us to concentrate on
those~$g^\s_n(a_1a_2\cdots a_k)$ with $k\le3$.

\begin{lemma}\label{lem:reduction}
Let $n\ge4$.
Suppose that $i,j,k$ are all different numbers in the set $\{2,3,\ldots,n\}$.
Then for any statistic $\s\in\{\rr,\dd,\pp,\vv\}$, we have
\[
g^\s_n(1ijk)
=\bigl(1+\chi(i=2)\bigr)q^{\s(1ijk)-\s(1jk)}\cdotp g^\s_{n-1}(1j'k'),
\]
where $j'=j-\chi(j>i)$ and $k'=k-\chi(k>i)$.
\end{lemma}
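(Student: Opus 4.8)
The plan is to analyze how the letter $1i$ at the front of $\text{Flatten}(\pi)$ is produced in standard cycle form, and then to "remove" the letter $i$ to set up a bijection with permutations of length $n-1$ whose flattened form starts with $1j'k'$. First I would observe that since $\text{Flatten}(\pi)$ begins with the letters $1ijk$, the first cycle of $\pi$ in standard cycle form must begin $(1i\ldots)$, and there are exactly two structural possibilities for where the first cycle ends: either the first cycle is precisely $(1i)$ (a transposition), in which case the next cycle starts with the smallest unused element, which must be $2$ — forcing $i>2$ is \emph{not} required here, but rather the second letter of $\text{Flatten}$ being $i$ with $i=2$ corresponds exactly to the first cycle being longer; or the first cycle has length $\ge 3$ and continues $(1ij\ldots)$. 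I would unwind exactly when each case can occur: if $i=2$ then, since $2$ is the smallest element available after $1$, the letter following $1$ in $\text{Flatten}(\pi)$ could come either from a continuation of the first cycle (first cycle is $(12\ldots)$) or from the start of the second cycle being $(2)$ or $(2\ldots)$ — wait, if the second cycle started with $2$ the first cycle would be just $(1)$, contradicting that the second letter is $i=2\ne 1$; so actually when $i=2$ the first cycle is $(12j\ldots)$ and $j,k$ are its next entries, giving one configuration, \emph{plus} the configuration where... I would need to recheck: the factor $1+\chi(i=2)$ signals that when $i=2$ there are two preimage families, presumably "first cycle is exactly $(12)$, second cycle starts $(3\ldots)$ but then $\text{Flatten}$ reads $1\,2\,3\ldots$ so $j=3$" versus "first cycle is $(12\ldots)$ longer". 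So the careful case split is: (a) first cycle $=(1i)$, which requires $i=2$ (else the next letter would be $2\ne j$), contributing one family with $j=$ smallest of the remaining, i.e. forced structure; (b) first cycle has length $\ge 3$, contributing another family, available for all $i$. That gives multiplicity $1$ when $i>2$ and $2$ when $i=2$.

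Next I would set up the length-reduction map. In either case above, define $\pi'\in\mathcal{S}_{n-1}$ by deleting the letter $i$ from $\pi$ (i.e. removing $i$ from its cycle, splicing the neighbors together, or deleting the singleton/2-cycle appropriately) and then standardizing the remaining $n-1$ values to $[n-1]$: each value $>i$ drops by $1$. Under this relabeling, $j\mapsto j'=j-\chi(j>i)$ and $k\mapsto k'=k-\chi(k>i)$, and one checks that $\text{Flatten}(\pi')$ begins with $1j'k'$ (the leading $1$ is unaffected since $1<i$; the letters $j,k$ are the next two flattened entries and survive the deletion, becoming $j',k'$), so $\pi'$ is counted by $g^\s_{n-1}(1j'k')$. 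I would verify this map is a bijection from the relevant preimage family onto $\{\pi'\in\mathcal{S}_{n-1}:\text{Flatten}(\pi')\text{ starts }1j'k'\}$ by exhibiting the inverse: insert a new letter (the one that standardizes back to $i$) immediately after $1$ in the appropriate cycle, shifting values. This is where the two families for $i=2$ both land on the \emph{same} target set, producing the factor $1+\chi(i=2)$.

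Then I would handle the weight. The statistic $\s$ counts occurrences of a length-three pattern among consecutive triples of $\text{Flatten}$. Deleting $i$ changes $\text{Flatten}(\pi)$ only near the front: the prefix $1ijk$ becomes $1jk$ (everything after position $3$ of the original, i.e. from $k$ onward, is identical up to relabeling, which preserves all order-relations hence all pattern occurrences). So $\s(\text{Flatten}(\pi)) - \s(\text{Flatten}(\pi'))$ depends only on the few triples near the front that involve the deleted letter $i$ — precisely the triples that fit inside the window $1ijk$ versus those inside $1jk$, i.e. the quantity $\s(1ijk)-\s(1jk)$ as defined on these short words. (One must be a bit careful that a triple straddling the boundary — e.g. the original triple $(i,j,k)$ and $(j,k,\ell)$ where $\ell$ is the flattened entry after $k$ — is accounted for: the triple $(j,k,\ell)$ has the same pattern before and after deletion since $j,k,\ell$ all survive, so it cancels; and $(1,i,j),(i,j,k)$ are exactly the triples inside $1ijk$ not inside $1jk$, while $(1,j,k)$ is the one triple inside $1jk$.) Hence each $\pi$ in the family contributes $q^{\s(\text{Flatten}(\pi'))}\cdot q^{\s(1ijk)-\s(1jk)}$, and summing over the family(s) gives $(1+\chi(i=2))q^{\s(1ijk)-\s(1jk)}g^\s_{n-1}(1j'k')$.

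The main obstacle I anticipate is the bookkeeping in the case split for $i=2$: making fully precise which cycle structures of $\pi$ realize a given flattened prefix $1\,2\,j\,k$, confirming there are exactly two of them and that they map to the same length-$(n-1)$ object, and checking that the boundary triples cancel correctly in the weight computation so that the clean exponent $\s(1ijk)-\s(1jk)$ really is the right shift for \emph{all four} statistics $\rr,\dd,\pp,\vv$ simultaneously (the point being that this difference is a local quantity independent of $n$ and of the tail). Once the local nature of $\s$ under front-deletion is isolated, the rest is routine.
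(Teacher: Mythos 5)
Your overall strategy --- delete the second letter of $\text{Flatten}(\pi)$, standardize to get a permutation of length $n-1$ whose flattened form starts with $1j'k'$, and observe that the statistic drops by exactly the local amount $\s(1ijk)-\s(1jk)$ --- is precisely the paper's proof, and your third paragraph (the cancellation of the boundary triples such as $(j,k,\ell)$ and the identification of the exponent) is correct. The genuine gap is in the case analysis that is supposed to produce the factor $1+\chi(i=2)$, which is the one nontrivial point of the lemma. You assert that the configuration ``first cycle $=(1)$, second cycle $=(2\ldots)$'' contradicts the second flattened letter being $i=2$; in fact it realizes it, since $\text{Flatten}\bigl((1)(2\ldots)\cdots\bigr)$ begins $1\,2\,\cdots$. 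This dismissed configuration is exactly the missing second preimage family. The correct dichotomy is: for $i\ge3$ the letter $i$ must sit inside the first cycle as its second element (otherwise the first cycle would be $(1)$ and the second flattened letter would be $2\ne i$), giving a unique preimage of each $\sigma$; for $i=2$ the letter $2$ may either be the second element of the first cycle or the first element of the second cycle (with first cycle $(1)$), and these two choices yield the same flattened word, whence each $\sigma$ has exactly two preimages.

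Your substitute dichotomy --- (a) first cycle exactly $(1i)$, which you claim forces $i=2$, versus (b) first cycle of length at least three --- is wrong on both counts. Case (a) does not force $i=2$: if $i\ge3$ and the first cycle is $(1i)$, the next cycle must start with $2$, so this occurs whenever $j=2$, which the lemma permits. Moreover, cases (a) and (b) both place $i$ inside the first cycle, so they are handled uniformly by the single deletion map and are sent to \emph{different} targets $\sigma$ (one with first cycle $(1)$, the other with first cycle $(1j'\ldots)$); they therefore do not produce a doubling. The doubling comes only from the first-cycle versus second-cycle ambiguity of the letter $2$. Since you explicitly flag this case split as the step you have not pinned down, and the split you propose would not yield the stated multiplicity, the proof as written does not go through.
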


\begin{proof}
Fix $n\ge4$ and $i,j,k$.
Let $\pi$ be a permutation of length~$n$ such that~$\text{Flatten}(\pi)$ starts with~$1ijk$.
Let~$\psi(\pi)$ be the permutation of length~$n-1$
obtained from $\pi$ by removing the letter~$i$
in the cycle notation and replacing all of the remaining letters $h$ by $h'=h-\chi(h>i)$.
In other words, we delete the second letter in the cycle notation of $\pi$ and translate
the other letters preserving the order to form a permutation of length~$n-1$.
Since $i,j,k$ are fixed, it is easy to see that the map~$\psi$ is a bijection.
Since $\s$ is a statistic involving three letters, we may deduce
\begin{equation}\label{eq:reduction-1}
\s(\text{Flatten}(\pi))=\s(\text{Flatten}(\psi(\pi)))+\s(1ijk)-\s(1j'k')=\s(\text{Flatten}(\psi(\pi)))+\s(1ijk)-\s(1jk).
\end{equation}

Conversely, let~$\sigma$ be any permutation of length~$n-1$
such that $\text{Flatten}(\sigma)$ starts from $1j'k'$.
If $i\ge3$, then the letter~$i$ must be in the first cycle of $\pi$. In this case,
the inverse $\psi^{-1}(\sigma)=\pi$ starts with $1ijk$.
Otherwise, $i=2$. Then the letter $2$ may be in either the first or the second cycle of $\pi$.
So the inverse $\psi^{-1}(\sigma)$ consists of two permutations $\{\pi,\pi'\}$
such that $\text{Flatten}(\pi)=\text{Flatten}(\pi')$.
Therefore,
the permutation~$\sigma$ contributes twice in this case.

In summary, we have
\begin{align*}
g^\s_n(1ijk)
=\sum_{\pi}q^{\s(\text{Flatten}(\pi))}
&=\bigl(1+\chi(i=2)\bigr)\sum_{\sigma}q^{\s(\text{Flatten}(\sigma))+\s(1ijk)-\s(1jk)}\\
&=\bigl(1+\chi(i=2)\bigr)q^{\s(1ijk)-\s(1jk)}\cdotp g^\s_{n-1}(1j'k'),
\end{align*}
where $\pi$ ranges over all permutations of length~$n$
starting with $1ijk$ and $\sigma$ ranges over all permutations of length~$n-1$
starting with $1j'k'$.
This completes the proof.
\end{proof}

Let us now consider $g^\s_n(1ij)$.
Note that $g^\s_n(1ij)$ with $j>i$ can be reduced via
\begin{equation}\label{eq:cls-0}
g^\s_n(1ij)=g^\s_{n-1}\bigl(1(j-1)\bigr)\bigl(1-\theta\cdotp\chi(\s=\rr)\bigr),
\end{equation}
which has only one variable~$j$.
For the sake of reducing $g^\s_n(1ij)$ in the other case $j<i$,
we need an exchanging letters trick.

\begin{lemma}\label{lem:exchange}
Let $n\ge 4$.
For any statistic $\s\in\{\rr,\dd,\pp,\vv\}$ and any $2\le j<i\le n$,
we have
\[
g_n^\s(1ij)=g_n^\s\bigl(1(j+1)j\bigr).
\]
\end{lemma}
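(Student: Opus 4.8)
The key observation behind a single, statistic-free argument is that each of $\rr,\dd,\pp,\vv$ depends only on the descent set of the flattened word: a $123$- (resp.\ $321$-) subword at position~$\ell$ is precisely an ascent (resp.\ descent) at both $\ell$ and $\ell+1$, while a peak (resp.\ valley) at $\ell$ is an ascent followed by a descent (resp.\ a descent followed by an ascent). Hence, fixing $\s\in\{\rr,\dd,\pp,\vv\}$, it is enough to produce a bijection between $\{\pi\in\mathcal{S}_n:\text{Flatten}(\pi)\text{ begins }1ij\}$ and $\{\pi\in\mathcal{S}_n:\text{Flatten}(\pi)\text{ begins }1(i-1)j\}$ that preserves the descent set of $\text{Flatten}(\pi)$; summing $q^{\s(\text{Flatten}(\cdot))}$ over each set then gives $g_n^\s(1ij)=g_n^\s\bigl(1(i-1)j\bigr)$. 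If $i=j+1$ the two words coincide and there is nothing to prove; otherwise $i\ge j+2$, so in particular $i\ge4$ (since $j\ge2$), and iterating this reduction lowers $i$ to $j+1$, giving the claimed identity.

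The exchanging-letters map I would use is conjugation by the transposition $\tau$ of the \emph{consecutive} values $i-1$ and $i$; set $\pi'=\tau\pi\tau$. The main step is to check that applying $\tau$ letter by letter to the standard cycle form of $\pi$ again produces a standard cycle form, so that $\text{Flatten}(\pi')=\tau\bigl(\text{Flatten}(\pi)\bigr)$. Since $\text{Flatten}(\pi)$ begins $1i$ with $i\ge3$, its second letter cannot start a new cycle, so $i$ lies in the first cycle of $\pi$ and in no other. Thus the first cycle still begins with $1$ after relabelling; and any cycle other than the first does not contain $i$, so it is either disjoint from $\{i-1,i\}$, hence untouched, or contains $i-1$, in which case replacing $i-1$ by $i$ changes neither which of its letters is smallest nor its cyclic order. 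Consequently the cycle minima retain their increasing order, the claim holds, and $\text{Flatten}(\pi')=\tau(\text{Flatten}(\pi))$ begins $\tau(1)\,\tau(i)\,\tau(j)=1\,(i-1)\,j$, using $1<i-1$ and $j\le i-2$. Running the same reasoning on a permutation whose flattening begins $1(i-1)j$ (here $i-1\ge3$) shows that $\pi\mapsto\tau\pi\tau$ is an involution interchanging the two sets, hence a bijection.

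It remains to see that relabelling a word by $\tau$ leaves its descent set unchanged. Such a relabelling can reverse the relative order of two adjacent letters only when those letters are exactly $i-1$ and $i$. But $i$ occupies position~$2$ of $\text{Flatten}(\pi)$, whose only neighbours are the letters in positions $1$ and $3$, namely $1$ and $j$, and neither of these equals $i-1$. Hence $i$ and $i-1$ are never adjacent in $\text{Flatten}(\pi)$, no descent is created or destroyed, and $\text{Flatten}(\pi)$ and $\text{Flatten}(\pi')$ have the same descent set; in particular $\s(\text{Flatten}(\pi))=\s(\text{Flatten}(\pi'))$. This establishes $g_n^\s(1ij)=g_n^\s(1(i-1)j)$ and hence the lemma.

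I expect the real obstacle to be the verification that $\text{Flatten}$ commutes with this relabelling, and in particular the reason one must exchange \emph{consecutive} values rather than transpose $j+1$ and $i$ directly so as to reach the target in one step: if a cycle of $\pi$ has minimum $j+1$ and also contains a value strictly between $j+1$ and $i$, then swapping $j+1$ and $i$ turns that interior element into the new minimum of the cycle, so re-standardising the cycle form would change $\text{Flatten}(\pi)$ by more than a relabelling. Exchanging only consecutive values, and iterating, is exactly what avoids this.
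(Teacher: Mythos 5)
Your proof is correct and follows essentially the same route as the paper's: exchange the consecutive values $i-1$ and $i$ and iterate until the second letter of the flattened word drops to $j+1$. You supply the details the paper compresses into ``it is easy to see''---namely that conjugation by the transposition $(i-1,i)$ commutes with $\mathrm{Flatten}$ because the second letter must lie in the first cycle, and that these two values are never adjacent in the flattened word, so the descent set (hence $\s$) is preserved.
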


\begin{proof}
Let $2\le j<i$ and $\s\in\{\rr,\dd,\pp,\vv\}$.
Assume that $j\le i-2$. Let $\pi$ be a permutation of length~$n$.
Denote by $\pi'$ the permutation obtained from $\text{Flatten}(\pi)$ by exchanging the letters $i$ and $i-1$.
It is easy to see that $\s(\text{Flatten}(\pi))=\s(\text{Flatten}(\pi'))$.
Iterating in this way, we get
\[
g_n^\s(1ij)=g_n^\s\bigl(1(i-1)j\bigr)=\cdots=g_n^\s\bigl(1(j+1)j\bigr),
\]
which completes the proof.
\end{proof}

At this stage, we can construct a recurrence for the sequence $\{g^\s_n(1k)\}_{k=3}^n$
by finding $g^\s_n\bigl(1(i+1)i\bigr)$ in two different ways.
Let $3\le i\le n-1$ and $\s\in\{\rr,\dd,\pp,\vv\}$.
On one hand, by Lemma~\ref{lem:exchange} and~(\ref{eq:cls-0}), we have
\begin{align}
\sum_{2\le j\le i-1}g^\s_n\bigl(1(j+1)j\bigr)
&=\sum_{2\le j\le i-1}g^\s_n(1ij)
=g^\s_n(1i)-\sum_{j>i}g^\s_n(1ij)\notag\\
&=g^\s_n(1i)-\sum_{j\ge i}g^\s_{n-1}(1j)\bigl(1-\theta\cdotp\chi(\s=\rr)\bigr).\label{eq:cls-1}
\end{align}
Then the first-order difference transformation of the above formula gives us
\begin{equation}\label{fm:g1i+1i-1}
g^\s_n\bigl(1(i+1)i\bigr)
=g^\s_n\bigl(1(i+1)\bigr)-g^\s_n(1i)+g^\s_{n-1}(1i)\bigl(1-\theta\cdotp\chi(\s=\rr)\bigr).
\end{equation}
On the other hand, applying Lemma~\ref{lem:reduction} to $g^\s_n\bigl(1(i+1)ik\bigr)$,
we find
\[
g^\s_n\bigl(1(i+1)ik\bigr)
=\begin{cases}
\bigl[1-\theta\cdotp\chi(\s=\dd)\bigr]\cdotp g^\s_{n-1}\bigl(1(k+1)k\bigr),
&\text{if }2\le k\le i-1;\\[3pt]
\bigl[1-\theta\cdotp\chi(\s\in\{\pp,\vv\})\bigr]\cdotp g^\s_{n-2}\bigl(1(k-2)\bigr),
&\text{if }k\ge i+2.
\end{cases}
\]
Therefore, by using~(\ref{eq:cls-1}), we may deduce
\begin{align}
g^\s_n\bigl(1(i+1)i\bigr)
&=\sum_{2\le k\le i-1}g^\s_n\bigl(1(i+1)ik\bigr)
+\sum_{k\ge i+2}g^\s_n\bigl(1(i+1)ik\bigr)\notag\\
&=\bigl[1-\theta\cdotp\chi(\s=\dd)\bigr]\cdotp g^\s_{n-1}(1i)
+\bigl[1-2\chi(\s\in\{\pp,\vv\})\bigr]\cdotp\theta\cdotp\sum_{k\ge i}g^\s_{n-2}(1k).\label{fm:g1i+1i-2}
\end{align}
Equating~(\ref{fm:g1i+1i-1}) and~(\ref{fm:g1i+1i-2}),
we obtain a recurrence
\begin{align*}
&\theta^{-1}\cdotp\Bigl[g^\s_n\bigl(1(i+1)\bigr)-g^\s_n(1i)\Bigr]\\
=\ &\bigl[\chi(\s=\rr)-\chi(\s=\dd)\bigr]\cdotp g^\s_{n-1}(1i)
+\bigl[1-2\chi(\s\in\{\pp,\vv\})\bigr]\cdotp\sum_{k\ge i}g^\s_{n-2}(1k).
\end{align*}

Let $3\le i\le n-2$. For the sake of eliminating the sum,
we apply the first-order difference transformation to it and get
\begin{align}
&\theta^{-1}\cdotp\Bigl[g^\s_n\bigl(1(i+2)\bigr)-2g^\s_n\bigl(1(i+1)\bigr)+g^\s_n(1i)\Bigr]\notag\\
=\ &\bigl[\chi(\s=\rr)-\chi(\s=\dd)\bigr]\cdotp \Bigl[g^\s_{n-1}\bigl(1(i+1)\bigr)-g^\s_{n-1}(1i)\Bigr]
-\bigl[1-2\chi(\s\in\{\pp,\vv\})\bigr]\cdotp g^\s_{n-2}(1i).\label{rec}
\end{align}
Our arguments will be based on the above recurrence.

\subsection{$123$-Subwords}\label{ssec:123}

In this case, the recurrence (\ref{rec}) reads
\begin{equation}\label{rec:123-g1k}
g^\rr_n(1k)
-2g^\rr_n\bigl(1(k-1)\bigr)
+g^\rr_n\bigl(1(k-2)\bigr)
-\theta \Bigl[
g^\rr_{n-1}\bigl(1(k-1)\bigr)
- g^\rr_{n-1}\bigl(1(k-2)\bigr)
-g^\rr_{n-2}\bigl(1(k-2)\bigr)
\Bigr]=0,
\end{equation}
for any $5\le k\le n$.
Recall that $U_n(x)$ is the $n$-th Chebyshev polynomial of the second kind.
We will now solve the recurrence (\ref{rec:123-g1k}) for $3\le k\le n-1$, with the initiation
\begin{equation}\label{ini:123-g1i}
\begin{split}
g^\rr_n(13)&=(1-\theta)g^\rr_{n-1}+2\theta(2-\theta)g^\rr_{n-2},\qquad n\ge4,\\
g^\rr_n(14)&=(1-\theta)g^\rr_{n-1}+3\theta(2-\theta)g^\rr_{n-2}-2\theta(1-3\theta+\theta^2)g^\rr_{n-3},\qquad n\ge5.
\end{split}
\end{equation}

\begin{theorem}\label{thm:123-g1k}
For any $3\le k\le n-1$, we have
\begin{equation}\label{fm:123-g1k}
g^\rr_n(1k)=\sum_{i=1}^{k-1}\alpha_{k,\,i}\cdot b^\rr_i\cdot g^\rr_{n-i},
\end{equation}
where
$\alpha_{k,\,i}={k-1\choose i-1}-{k-3\choose i-3}$ and
$b^\rr_i=-\theta^{(i-1)/2}U_{i+1}\bigl(\sqrt\theta /2\bigr)$.
\end{theorem}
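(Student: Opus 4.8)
The plan is to establish (\ref{fm:123-g1k}) by induction on $k$, treating (\ref{rec:123-g1k}) as a recurrence that determines $g^\rr_n(1k)$ from values of the form $g^\rr_m(1\ell)$ with $\ell<k$. The base cases $k=3$ and $k=4$ are exactly the two relations in (\ref{ini:123-g1i}): one checks $\alpha_{3,1}=\alpha_{4,1}=1$, $\alpha_{3,2}=2$, $\alpha_{4,2}=3$, $\alpha_{4,3}=2$ straight from the definition of $\alpha_{k,i}$, and evaluates $b^\rr_1=1-\theta$, $b^\rr_2=\theta(2-\theta)$, $b^\rr_3=-\theta(1-3\theta+\theta^2)$ from the closed form of $b^\rr_i$ using $U_2(t)=4t^2-1$, $U_3(t)=8t^3-4t$, $U_4(t)=16t^4-12t^2+1$ at $t=\sqrt\theta/2$.

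For the inductive step, fix $5\le k\le n-1$. One first observes that for $k$ in this range every quantity $g^\rr_m(1\ell)$ occurring on the right-hand side of (\ref{rec:123-g1k}) satisfies $3\le\ell\le m-1$, so the inductive hypothesis (\ref{fm:123-g1k}) applies to each of $g^\rr_n(1(k-1))$, $g^\rr_n(1(k-2))$, $g^\rr_{n-1}(1(k-1))$, $g^\rr_{n-1}(1(k-2))$, $g^\rr_{n-2}(1(k-2))$. Solving (\ref{rec:123-g1k}) for $g^\rr_n(1k)$, inserting (\ref{fm:123-g1k}) for each of these five terms, and re-indexing the inner sums so that every summand is a scalar multiple of some $g^\rr_{n-j}$, one finds (with the convention $\alpha_{m,i}=0$ unless $1\le i\le m-1$) that the coefficient of $g^\rr_{n-j}$ equals
\[
(2\alpha_{k-1,j}-\alpha_{k-2,j})\,b^\rr_j+\theta(\alpha_{k-1,j-1}-\alpha_{k-2,j-1})\,b^\rr_{j-1}-\theta\,\alpha_{k-2,j-2}\,b^\rr_{j-2}.
\]
It then suffices to prove this equals $\alpha_{k,j}\,b^\rr_j$ for all $1\le j\le k-1$, which forces (\ref{fm:123-g1k}) at $k$.

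This reduction rests on two elementary recurrences. First, applying Pascal's rule to each binomial coefficient in $\alpha_{k,i}=\binom{k-1}{i-1}-\binom{k-3}{i-3}$ gives the Pascal-type identity $\alpha_{k,i}=\alpha_{k-1,i}+\alpha_{k-1,i-1}$. Second, substituting the closed form $b^\rr_i=-\theta^{(i-1)/2}U_{i+1}(\sqrt\theta/2)$ into the Chebyshev recurrence (\ref{def:ChebyshevPolynomial}) at $t=\sqrt\theta/2$ (where $2t=\sqrt\theta$) yields $b^\rr_i=\theta\,b^\rr_{i-1}-\theta\,b^\rr_{i-2}$, i.e. $\theta\,b^\rr_{j-1}=b^\rr_j+\theta\,b^\rr_{j-2}$. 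For $j\ge3$, feeding this relation into the $b^\rr_{j-1}$ term above and regrouping turns the coefficient of $g^\rr_{n-j}$ into
\[
\bigl(2\alpha_{k-1,j}-\alpha_{k-2,j}+\alpha_{k-1,j-1}-\alpha_{k-2,j-1}\bigr)\,b^\rr_j+\theta\bigl(\alpha_{k-1,j-1}-\alpha_{k-2,j-1}-\alpha_{k-2,j-2}\bigr)\,b^\rr_{j-2};
\]
the $b^\rr_{j-2}$ coefficient vanishes by the Pascal identity for $\alpha$, and two further applications of that identity collapse the $b^\rr_j$ coefficient to $\alpha_{k-1,j}+\alpha_{k-1,j-1}=\alpha_{k,j}$. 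The cases $j=1,2$, where the $b^\rr_{j-2}$ term (and for $j=1$ also the $b^\rr_{j-1}$ term) is absent, are checked directly using $\alpha_{k,1}=1$ and $\alpha_{k,2}=k-1$.

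The main obstacle here is not conceptual but organizational: one must keep careful track of the ranges of validity of the various $g^\rr_m(1\ell)$ so that the inductive hypothesis genuinely applies, and of the endpoint behaviour of the two re-indexed inner sums, which is precisely where the convention $\alpha_{m,i}=0$ outside $1\le i\le m-1$ and the separate treatment of $j\in\{1,2\}$ come in. Once the Pascal recurrence for $\alpha_{k,i}$ and the Chebyshev-induced three-term recurrence for $b^\rr_i$ are recorded, the remaining verification is routine algebra.
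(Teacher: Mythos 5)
Your proposal is correct and follows essentially the same route as the paper: both check the base cases $k=3,4$ against~(\ref{ini:123-g1i}) and then show the closed form propagates through the recurrence~(\ref{rec:123-g1k}) using exactly the two identities $\alpha_{k,i}=\alpha_{k-1,i}+\alpha_{k-1,i-1}$ and $b^\rr_i-\theta\bigl(b^\rr_{i-1}-b^\rr_{i-2}\bigr)=0$ (the Chebyshev three-term relation at $t=\sqrt\theta/2$). The only difference is presentational: you make the underlying uniqueness argument an explicit induction on $k$ and track the coefficient of each $g^\rr_{n-j}$, whereas the paper substitutes the formula into the left-hand side $L_{n,k}$ and collapses it to zero in one telescoping sum.
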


\begin{proof}
It is straightforward to check that formula~(\ref{fm:123-g1k})
admits the initiation~(\ref{ini:123-g1i}).
It suffices to show that it satisfies the recurrence~(\ref{rec:123-g1k}) as well.
Let $3\le k\le n-1$.
Note that $\alpha_{k,i}=0$ if $i\le 0$ or $i\ge k$.
For notational convenience, we rewrite~(\ref{fm:123-g1k}) as
$g^\rr_n(1k)=\sum_{i}\alpha_{k,\,i}\cdot b^\rr_i\cdot g^\rr_{n-i}$,
where $i$ runs over all integers.
Denote the left-hand side of~(\ref{rec:123-g1k}) by $L_{n,k}$.
Since $\alpha_{k+1,\,i}-\alpha_{k,\,i}=\alpha_{k,\,i-1}$
for any~$i$ and
\begin{equation}
b^\rr_i-\theta(b^\rr_{i-1}-b^\rr_{i-2})
=-\theta^{(i-1)/2}\Bigl[U_{i+1}(\sqrt\theta /2)-\sqrt\theta \cdot U_{i}(\sqrt\theta /2)+U_{i+1}(\sqrt\theta /2)\Bigr]=0,
\end{equation}
we may deduce
\begin{align*}
L_{n,k}&=\sum_{i=1}^{k+1}\alpha_{k+2,i}\,b^\rr_i\,g^\rr_{n-i}
-2\sum_{i=1}^k\alpha_{k+1,i}\,b^\rr_i\,g^\rr_{n-i}
+\sum_{i=1}^{k-1}\alpha_{k,i}\,b^\rr_i\,g^\rr_{n-i}\\
&\quad -\theta\Biggl[
\sum_{i=1}^{k}\alpha_{k+1,i}\,b^\rr_i\,g^\rr_{n-1-i}
-\sum_{i=1}^{k-1}\alpha_{k,i}\,b^\rr_i\,g^\rr_{n-1-i}
-\sum_{i=1}^{k-1}\alpha_{k,i}\,b^\rr_i\,g^\rr_{n-2-i}\Biggr]\\
&=\sum_{i\ge1}\alpha_{k,i-2}\cdotp\Bigl[b^\rr_i-\theta(b^\rr_{i-1}-b^\rr_{i-2})\Bigr]\cdotp g^\rr_{n-i}
=0.
\end{align*}
This completes the proof.
\end{proof}

\begin{corollary}
For any $n\ge4$, we have
\begin{equation}\label{fm:123-gn}
g^\rr_n=\sum_{i=1}^{n-2}g^\rr_{n-i}\Bigl(
\alpha_{n,i+1}b^\rr_i-\theta\,\alpha_{n-1,\,i} b^\rr_{i-2}
\Bigr).
\end{equation}
\end{corollary}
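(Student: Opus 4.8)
The plan is to obtain $g^\rr_n = g^\rr_n(1)$ by summing the recurrence~(\ref{rec:g}) with $k=1$, i.e.\ $g^\rr_n = \sum_{k=2}^n g^\rr_n(1k)$, and then plugging in the closed form for $g^\rr_n(1k)$ supplied by Theorem~\ref{thm:123-g1k}. The subtlety is that Theorem~\ref{thm:123-g1k} only covers the range $3\le k\le n-1$, so the terms $k=2$ and $k=n$ must be handled separately before the telescoping collapses things into the stated shape.

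First I would record that $g^\rr_n(12) = g^\rr_{n-1}$ by~(\ref{eq:cls-0}) with $i=2$, $j=2$ does not apply directly since that requires $j>i$; instead $g^\rr_n(12)=g^\rr_{n-1}(11)$ is nonsense, so one uses the $k=2$ instance of~(\ref{rec:g}) differently: actually $g^\rr_n(12)$ corresponds to $\text{Flatten}(\pi)$ starting $12$, which by deleting the letter $2$ (it begins the second cycle or sits second in the first cycle) gives $g^\rr_n(12) = g^\rr_{n-1}\cdot(1-\theta)$-type reduction exactly as in Lemma~\ref{lem:reduction}'s ideas; I would instead simply note that $\sum_{k=2}^n g^\rr_n(1k) = g^\rr_n(12) + \sum_{k=3}^{n-1} g^\rr_n(1k) + g^\rr_n(1n)$ and that the extreme terms $g^\rr_n(12)$ and $g^\rr_n(1n)$ can themselves be evaluated: $g^\rr_n(1n)$ (with $\text{Flatten}(\pi)$ starting $1,n$, which forces a descent immediately after, so this is a ``$1ij$ with $j<i$'' situation reducible by Lemma~\ref{lem:exchange} and~(\ref{eq:cls-0})), and $g^\rr_n(12)$ by~(\ref{eq:cls-0}) read as $g^\rr_n(12) = g^\rr_{n-1}(11+\ldots)$. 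The cleanest route, though, is to apply the \emph{first-order difference} bookkeeping already used in the derivation of~(\ref{rec}): the sum $\sum_{k=2}^n g^\rr_n(1k)$ telescopes when one writes each $g^\rr_n(1k)$ via~(\ref{fm:123-g1k}) and collects the coefficient of each fixed $g^\rr_{n-i}$.

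The key computational step is the swap of summation order. Writing
\[
g^\rr_n=\sum_{k=3}^{n-1}\sum_{i=1}^{k-1}\alpha_{k,i}\,b^\rr_i\,g^\rr_{n-i}+(\text{boundary }k=2,\,k=n\text{ terms}),
\]
I would exchange the two sums to get $\sum_{i=1}^{n-2}\bigl(\sum_{k} \alpha_{k,i}\bigr) b^\rr_i\, g^\rr_{n-i}$, and then use the hockey-stick–type identity for $\alpha_{k,i}={k-1\choose i-1}-{k-3\choose i-3}$. Since $\sum_{k}\bigl[\binom{k-1}{i-1}-\binom{k-3}{i-3}\bigr] = \binom{K}{i} - \binom{K-2}{i-2}$ for the appropriate upper limit $K$ (by the standard $\sum_{m}\binom{m}{j}=\binom{M+1}{j+1}$ identity applied twice), the inner sum over $k$ from $i+1$ up to $n-1$ produces precisely $\alpha_{n,i+1}$ up to a correction coming from the missing $k=n$ term; the $k=n$ boundary term should supply exactly what is needed to upgrade the range to include $k=n$, and simultaneously the $k=2$ term together with the $-\theta$ cross-terms (which arise because $\alpha_{k,i}$ in~(\ref{fm:123-g1k}) for $k=n-1$ feeds a $g^\rr_{n-1}(1(n-1))$ that is itself re-expanded) account for the $-\theta\,\alpha_{n-1,i}b^\rr_{i-2}$ summand in~(\ref{fm:123-gn}). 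Concretely, I expect the identity $\alpha_{n,i+1} = \sum_{k=i+1}^{n-1}\alpha_{k,i} + (\text{a }k=n\text{ contribution})$ to fall out of partial-summation on binomials, and the $\theta$-term to emerge from re-indexing the one place where the closed form is only valid up to $k=n-1$ and the true value of $g^\rr_n(1n)$ differs from the formal $k=n$ substitution by a multiple of $g^\rr_{n-2}$-type data (reflecting the extra reduction in Lemma~\ref{lem:reduction} when the second flattened letter is large).

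The main obstacle is precisely this boundary accounting: verifying that the endpoint terms $k=2$ and $k=n$ — neither of which is governed by Theorem~\ref{thm:123-g1k} — contribute exactly the discrepancy between the naive swapped-sum answer $\sum_i\bigl(\sum_{k=3}^{n-1}\alpha_{k,i}\bigr)b^\rr_i g^\rr_{n-i}$ and the clean formula $\sum_i(\alpha_{n,i+1}b^\rr_i - \theta\,\alpha_{n-1,i}b^\rr_{i-2})g^\rr_{n-i}$. I would therefore first nail down $g^\rr_n(12)$ and $g^\rr_n(1n)$ explicitly in terms of $g^\rr_{n-1}$, $g^\rr_{n-2}$ (using~(\ref{eq:cls-0}), Lemma~\ref{lem:exchange}, and the already-established initiations~(\ref{ini:123-g1i})), then do the binomial partial-summation for the bulk, and finally check — this is the one genuinely tedious verification — that the two $\theta$-dependent pieces match by comparing coefficients of $g^\rr_{n-i}$ for each $i$, using the recurrence $b^\rr_i = \theta(b^\rr_{i-1}-b^\rr_{i-2})$ from the proof of Theorem~\ref{thm:123-g1k} to simplify. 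The identity $\alpha_{k+1,i}-\alpha_{k,i}=\alpha_{k,i-1}$ already noted in that proof will also be the workhorse for the telescoping.
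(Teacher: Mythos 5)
Your overall strategy --- decompose $g^\rr_n=\sum_{k=2}^n g^\rr_n(1k)$, feed in Theorem~\ref{thm:123-g1k} for $3\le k\le n-1$, and absorb the two boundary terms --- is the right family of argument, and the bookkeeping tools you name ($\alpha_{k+1,i}-\alpha_{k,i}=\alpha_{k,i-1}$, the hockey-stick summation, the relation $b^\rr_i=\theta(b^\rr_{i-1}-b^\rr_{i-2})$) are exactly the ones needed. But the proposal has a genuine gap at what you yourself identify as the crux: you never actually determine $g^\rr_n(1n)$, and the route you sketch for it does not work as stated. Lemma~\ref{lem:exchange} converts $g^\rr_n(1nj)$ into $g^\rr_n\bigl(1(j+1)j\bigr)$, but recovering those via~\eqref{fm:g1i+1i-1} reintroduces $g^\rr_n\bigl(1(j+1)\bigr)$ for $j+1$ up to $n$, i.e.\ $g^\rr_n(1n)$ itself, so that computation is circular; the non-circular alternative~\eqref{fm:g1i+1i-2} leads to a double sum of Theorem~\ref{thm:123-g1k} expressions plus a separate term $g^\rr_n(1n2)=g^\rr_n(132)$, none of which you carry out. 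Your treatment of $g^\rr_n(12)$ is likewise left unresolved; the correct value is $g^\rr_n(12)=2q\,g^\rr_{n-1}$, obtained from Lemma~\ref{lem:reduction} summed over the third letter (the factor $2$ because the letter $2$ may open its own cycle), and that exact constant is needed for the coefficients to close up.

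The paper closes the system by a substitution trick that your plan is missing: instead of evaluating $g^\rr_n(1n)$ from scratch, one \emph{solves} the completeness relation for it, getting $g^\rr_n(1n)=-\sum_{i\ge0}\alpha_{n,i+1}b^\rr_i\,g^\rr_{n-i}$, where the $i=0$ term carries the unknown $g^\rr_n$ itself (since $\alpha_{n,1}=1$ and $b^\rr_0=-1$). Substituting this, together with the analogous expressions for $g^\rr_{n-1}\bigl(1(n-1)\bigr)$ and $g^\rr_{n-2}\bigl(1(n-2)\bigr)$, into the $k=n$ instance of recurrence~\eqref{rec:123-g1k} yields $\sum_{i\ge0}g^\rr_{n-i}\bigl(\theta\,\alpha_{n-1,i}b^\rr_{i-2}-\alpha_{n,i+1}b^\rr_i\bigr)=0$, which is then read off as~\eqref{fm:123-gn}. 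To repair your version you must either adopt this trick or genuinely compute $g^\rr_n(1n)$ via~\eqref{fm:g1i+1i-2}; in either case the base case $n=4$ requires a separate direct verification, which your proposal also omits.
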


\begin{proof} It is routine to verify \eqref{fm:123-gn} for $n=4$. Let $n\ge5$.
By~(\ref{rec:123-g1k}) and~(\ref{fm:123-g1k}), we can derive a formula for $g^\rr_n(1n)$ as follows:
\begin{align*}
g^\rr_n(1n)&=g^\rr_n-g^\rr_n(12)-\sum_{3\le k\le n-1}g^\rr_n(1k)\\
&=g^\rr_n-2q\cdotp g^\rr_{n-1}-
\sum_{3\le k\le n-1}\biggl(q\cdotp g^\rr_{n-1}+
\sum_{i\ge2}\alpha_{k,i}\,b^\rr_i\,g^\rr_{n-i}\biggr)\\
&=g^\rr_n-(n-1)q\cdotp g^\rr_{n-1}-
\sum_{2\le i\le n-2}\alpha_{n,i+1}b^\rr_i\,g^\rr_{n-i}\\
&=-\sum_{i\ge0}\alpha_{n,i+1}b^\rr_i\,g^\rr_{n-i}.
\end{align*}
Using the above formula,
and taking $k=n$ in recurrence~(\ref{rec:123-g1k}), we find
\begin{align*}
0=&g^\rr_n(1n)
-2g^\rr_n\bigl(1(n-1)\bigr)
+g^\rr_n\bigl(1(n-2)\bigr)\\[3pt]
&\quad-\theta \Bigl[
g^\rr_{n-1}\bigl(1(n-1)\bigr)
- g^\rr_{n-1}\bigl(1(n-2)\bigr)
-g^\rr_{n-2}\bigl(1(n-2)\bigr)
\Bigr]\\[3pt]
=&-\sum_{i\ge0}\alpha_{n,i+1}\,b^\rr_i\,g^\rr_{n-i}
-2\sum_{i\ge0}\alpha_{n-1,\,i}\,b^\rr_i\,g^\rr_{n-i}
+\sum_{i\ge0}\alpha_{n-2,\,i}\,b^\rr_i\,g^\rr_{n-i}\\
&-\theta\Biggl[
-\sum_{i\ge0}\alpha_{n-1,i+1}\,b^\rr_i\,g^\rr_{n-1-i}
-\sum_{i\ge0}\alpha_{n-2,\,i}\,b^\rr_i\,g^\rr_{n-1-i}
+\sum_{i\ge0}\alpha_{n-2,i+1}\,b^\rr_i\,g^\rr_{n-2-i}\Biggr]\\
=&\sum_{i\ge0}g^\rr_{n-i}\Bigl[
(-\alpha_{n,i+1}-2\alpha_{n-1,\,i}+\alpha_{n-2,\,i})b^\rr_i
-\theta\bigl(
(-\alpha_{n-1,i}-\alpha_{n-2,\,i-1})\,b^\rr_{i-1}
+\alpha_{n-2,i-1}\,b^\rr_{i-2}
\bigr)\Bigr]\\
=&\sum_{i\ge0}g^\rr_{n-i}\Bigl(
\theta\,\alpha_{n-1,\,i} b^\rr_{i-2}-\alpha_{n,i+1}b^\rr_i
\Bigr),
\end{align*}
from which~(\ref{fm:123-gn}) follows immediately.
\end{proof}

To proceed further, we will need the generating function of $b^\rr_n$.
By~(\ref{fm:U}), we have
\[
\sum_{n\geq1}U_{n+1}(t)\frac{x^n}{n!}
={A(t)^2e^{A(t)x}-B(t)^2e^{B(t)x}\over A(t)-B(t)}-2t,
\]
where
$A(t)=t+\sqrt{t^2-1}$ and $B(t)=t-\sqrt{t^2-1}$.
This gives us the generating function
\begin{align*}
B^\rr(x)&=\sum_{n\geq1}b^\rr_n\frac{x^n}{n!}
=-\frac{1}{\sqrt\theta }\sum_{n\geq1}
U_{n+1}\left(\frac{\sqrt\theta }{2}\right)\frac{(\sqrt\theta x)^n}{n!}\\
&=1+\frac{(\sqrt\theta -\sqrt{\theta-4})^2e^{(\sqrt\theta -\sqrt{\theta-4})\sqrt\theta x/2}
-(\sqrt\theta +\sqrt{\theta-4})^2e^{(\sqrt\theta +\sqrt{\theta-4})\sqrt\theta x/2}}{4\sqrt\theta \sqrt{\theta-4}}.
\end{align*}
By using Euler's formula $e^{i\phi}=\cos\phi+i\sin\phi$,
one may show that
\begin{equation}\label{fm:123-B}
B^\rr(x)=1-\frac {2{{\rm e}^{\theta x/2}}}{\sqrt{\theta(4-\theta)}}
\cos\Biggl( \frac{\sqrt{\theta(4-\theta)}x}{2}+\arcsin\biggl( \frac{2-\theta}{2}\biggr) \Biggr).
\end{equation}

We now give an explicit formula for $G^\rr(x)$.

\begin{theorem}\label{thm:123-main}
We have
$G^\rr(x)=1+\int_{0}^xH(t)\,dt$, where
\begin{equation}\label{fm:123-H}
H(x)={2\sqrt\theta(4-\theta)(1+\xi^2)^2
\Bigl[(1-\xi^2)\sqrt{4-\theta}\bigl(1-(1-\theta)e^{-\theta x}\bigr)
+2\sqrt\theta\xi\bigl(1-(3-\theta)e^{-\theta x}\bigr)\Bigr]
\over (\sqrt\theta\xi+\sqrt{4-\theta})^3(\sqrt\theta-\sqrt{4-\theta}\xi)^3},
\end{equation}
with $\xi=\tan(\sqrt{\theta(4-\theta)}x/4)$.
\end{theorem}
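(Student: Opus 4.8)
The plan is to convert the linear recurrence from the previous corollary, namely $g^\rr_n=\sum_{i=1}^{n-2}g^\rr_{n-i}\bigl(\alpha_{n,i+1}b^\rr_i-\theta\,\alpha_{n-1,\,i}b^\rr_{i-2}\bigr)$, into a differential equation for $G^\rr(x)$, and then solve that equation explicitly. First I would record the combinatorial meaning of the coefficients $\alpha_{n,i}={n-1\choose i-1}-{n-3\choose i-3}$: since these are differences of binomial coefficients, a sum of the form $\sum_i \alpha_{n,i+1}c_i g^\rr_{n-i}$ is, after dividing by $(n-1)!$ and summing over $n$, a product of exponential generating functions up to low-order corrections coming from the two binomial "defects." Concretely, one writes $\alpha_{n,i+1}={n-1\choose i}-{n-3\choose i-2}$, so that the term $\alpha_{n,i+1}b^\rr_i$ contributes a convolution governed by the shift $(i)$ versus $(i-2)$; the second term $\theta\,\alpha_{n-1,i}b^\rr_{i-2}$ contributes similarly with $g^\rr_{n-i}$ replaced, after reindexing, by a derivative of $G^\rr$.

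The key computational step is to multiply the recurrence by $x^{n-1}/(n-1)!$ and sum over $n\ge4$ (handling $n=4$ by direct check, as in the proof of the corollary). Using the binomial structure of $\alpha$, the left side becomes $(G^\rr)'(x)$ up to the explicitly known initial terms, and the right side becomes an expression in $G^\rr(x)$, $(G^\rr)'(x)$, and the generating function $B^\rr(x)$ of the $b^\rr_i$ together with $e^{-\theta x}B^\rr$-type terms coming from the factor $\theta$ and the shift by two in $g^\rr_{n-2-i}$. Since $B^\rr(x)$ has the closed form in~(\ref{fm:123-B}), the outcome is a first-order linear ODE for $G^\rr$, which after solving and using $G^\rr(0)=g^\rr_1=1$ yields $G^\rr(x)=1+\int_0^x H(t)\,dt$ with $H=(G^\rr)'$ given by~(\ref{fm:123-H}). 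The substitution $\xi=\tan(\sqrt{\theta(4-\theta)}x/4)$ is the natural one because $B^\rr(x)$ is built from $\cos\bigl(\sqrt{\theta(4-\theta)}x/2+\arcsin((2-\theta)/2)\bigr)$, and the half-angle/Weierstrass substitution rationalizes exactly this trigonometric expression; the factors $(\sqrt\theta\xi+\sqrt{4-\theta})^3$ and $(\sqrt\theta-\sqrt{4-\theta}\xi)^3$ in the denominator of $H$ are the cubes that arise when one clears the denominators produced by three nested applications of that substitution (one from $B^\rr$, and two more from integrating the ODE).

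The main obstacle I anticipate is bookkeeping rather than conceptual: correctly tracking the finitely many boundary corrections that the two binomial defects in $\alpha_{n,i}$ and the lower limit $i\ge1$ introduce when passing from the recurrence to the generating-function identity, and then simplifying the resulting ODE coefficients — which are combinations of $B^\rr(x)$ and $e^{-\theta x}$ — into the compact trigonometric form before applying the Weierstrass substitution. I would organize this by first deriving the ODE in the "raw" form $(G^\rr)' = P(x)G^\rr + Q(x)$ with $P,Q$ expressed through $B^\rr$ and $e^{-\theta x}$, then verifying $P,Q$ against the known initial values $g^\rr_1,\dots,g^\rr_4$, and only afterward performing the $\xi$-substitution and matching with~(\ref{fm:123-H}). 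A useful internal check throughout is to set $q=1$ (so $\theta=0$): then $g^\rr_n=n!$, $G^\rr(x)=1/(1-x)$, and both the ODE and the closed form~(\ref{fm:123-H}) must degenerate correctly, which pins down all the constants.
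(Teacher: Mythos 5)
Your overall strategy---translate the recurrence \eqref{fm:123-gn} into a differential equation for $G^\rr(x)$ by exponential-generating-function manipulation, fix the constants from the initial values, and only then rationalize with $\xi=\tan(\sqrt{\theta(4-\theta)}x/4)$---is the same as the paper's. But there is a genuine gap in your central structural claim: the recurrence does \emph{not} collapse to a first-order linear ODE $(G^\rr)'=P(x)G^\rr+Q(x)$. The coefficient $\alpha_{n,i}={n-1\choose i-1}-{n-3\choose i-3}$ equals $\frac{(n-3)!\,[(n-1)(n-2)-(i-1)(i-2)]}{(i-1)!\,(n-i)!}$, i.e.\ it is the pure convolution kernel multiplied by a factor quadratic in both $n$ and $i$. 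With your normalization $x^{n-1}/(n-1)!$, the piece coming from ${n-3\choose i-2}b^\rr_i g^\rr_{n-i}$ carries a factor $\frac{1}{(n-1)(n-2)}$, so you obtain an integro-differential relation; clearing the double integration (equivalently, multiplying by $x^{n-4}/(n-4)!$ so that the quadratic factors act as derivatives on the product $\bigl(G^\rr(x)-1\bigr)B^\rr(x)$) yields a \emph{second-order} linear ODE for $H=\tfrac{d}{dx}G^\rr$ --- third order for $G^\rr$ --- whose coefficients involve $B^\rr$, $\tfrac{d}{dx}B^\rr$, $\tfrac{d^2}{dx^2}B^\rr$ and $\int_0^xB^\rr(t)\,dt$. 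An integrating-factor argument therefore does not apply; one needs both boundary values $H(0)=2$ and $H'(0)=6-4\theta$ and must actually solve (or verify the claimed solution of) a second-order equation. That is also where the cubes $(\sqrt\theta\xi+\sqrt{4-\theta})^3(\sqrt\theta-\sqrt{4-\theta}\xi)^3$ in \eqref{fm:123-H} come from, not from ``three nested'' Weierstrass substitutions.

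Two smaller corrections. First, no $e^{-\theta x}$ appears in the raw ODE coefficients; those are built solely from $B^\rr$ (an $e^{\theta x/2}\cos(\cdot)$-type function), and the $e^{-\theta x}$ in \eqref{fm:123-H} only emerges after solving. Second, your $q=1$ sanity check is off: since $G^\rr(x)=\sum_{n\ge0}g^\rr_{n+1}x^n/n!$ and $g^\rr_n=n!$ at $q=1$, the degenerate value is $G^\rr(x)=(1-x)^{-2}$, not $(1-x)^{-1}$; this is consistent with $H(0)=2$.
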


\begin{proof}
Recall that $G^\rr(x)=\sum_{n\ge1}g^\rr_{n}{x^{n-1}\over (n-1)!}$.
Since
\[
\alpha_{n,i}={n-1\choose i-1}-{n-3\choose i-3}={(n-3)!\bigl[(n-1)(n-2)-(i-1)(i-2)\bigr]\over (i-1)!(n-i)!},
\]
multiplying~(\ref{fm:123-gn})
by ${x^{n-4}\over (n-4)!}$ and summing over $n\ge4$ gives us
\begin{align*}
\sum_{n\ge4}g^\rr_n{x^{n-4}\over (n-4)!}
&=\sum_{n\ge4}\sum_{i=1}^{n-2}g^\rr_{n-i}\alpha_{n,i+1}b^\rr_i{x^{n-4}\over (n-4)!}
-\theta\sum_{n\ge4}\sum_{i=1}^{n-2}g^\rr_{n-i}\alpha_{n-1,\,i} b^\rr_{i-2}{x^{n-4}\over (n-4)!}\\
&=\sum_{n\ge4}{n-3\over x^3}
\sum_{i=1}^{n-2}
{b^\rr_ix^i\over i!}\cdot
{g^\rr_{n-i}x^{n-i-1}\over (n-i-1)!}\cdot
\bigl[(n-1)(n-2)-i(i-1)\bigr]\\
&\quad-\theta\sum_{n\ge4}{1\over x^3}
\sum_{i=1}^{n-2}
{b^\rr_{i-2}x^i\over (i-1)!}\cdot
{g^\rr_{n-i}x^{n-i-1}\over (n-i-1)!}\cdot
\bigl[(n-2)(n-3)-(i-1)(i-2)\bigr].
\end{align*}
Upon noting $b^\rr_{-1}=-1/\theta$ and $b^\rr_0=-1$, the preceding equation may then be expressed as
\begin{align*}
{d^3\over dx^3}G^\rr(x)
&={d^3\over dx^3}\Bigl(\bigl(G^\rr(x)-1\bigr)B(x)\Bigr)
-{d\over dx}\Bigl(\bigl(G^\rr(x)-1\bigr)\cdot{d^2\over dx^2}B(x)\Bigr)\\
&\quad
-\theta{d^2\over dx^2}
\biggl(\bigl(G^\rr(x)-1\bigr)\Bigl(-{1\over\theta}-x+\int_{0}^xB(t)\,dt\Bigr)\biggr)
+\theta\Bigl(\bigl(G^\rr(x)-1\bigr)\cdot{d\over dx}B(x)\Bigr).
\end{align*}
Letting $H(x)=\frac{d}{dx}G^\rr(x)$ reduces the last equation to
{\small\begin{align*}
2\biggl(\theta-\theta B(x)+{d^2\over dx^2}B(x)\biggr)H(x)
+\biggl(1+\theta x+{3d\over dx}B(x)-\theta\int_{0}^xB(t)\,dt\biggr){d\over dx}H(x)
+\bigl(B(x)-1\bigr){d^2\over dx^2}H(x)
=0.
\end{align*}}With the boundary values $H(0)=2$ and $H'(0)=6-4\theta$,
we obtain the solution~(\ref{fm:123-H}) from the preceding differential equation. Consequently,
we obtain $G^\rr(x)$, upon noting $G^\rr(0)=1$.
\end{proof}

After integration of $H$ and several algebraic operations, one may derive the further formula
\begin{align*}
G^\rr(x)=\frac {2\sqrt{\theta}\cos\left(7\gamma +2\beta x \right) -4\cos \left(2\gamma +\beta x \right)-4+\theta}{4 \left( 1-\cos \left( 4\gamma +\beta x \right)  \right) ^{2}}
 +\frac {6-(4-\theta) e^{-\theta x} }{2 \left( 1-\cos \left( 4\gamma +\beta x \right)  \right)},
\end{align*}
where $\beta=\sqrt {\theta(4-\theta)}$ and $\gamma=\arccos \frac{\sqrt{\theta}}{2}$.
From this, one can find an explicit formula for $g^\rr_n$ using the Taylor expansion of $G^\rr(x)$, an exercise we leave to the interested reader.

\indent Differentiation of $G^r(x)$ with respect to $q$ yields the following result.

\begin{corollary}\label{cor123}
If $n \geq 3$, then the average number of occurrences of $123$-subwords in $\text{Flatten}(\pi)$ over $\pi\in S_n$ is given by $\frac{n^2+3n-6}{6n}$.
\end{corollary}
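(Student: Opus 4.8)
The plan is to extract the average from the generating function $G^\rr(x)$ by the standard device of differentiating with respect to $q$ and evaluating at $q=1$. Write $\mu_n$ for the average number of $123$-subwords in $\text{Flatten}(\pi)$ over $\pi\in\mathcal S_n$, so that $\mu_n = \frac{1}{n!}\,\frac{d}{dq}g^\rr_n\big|_{q=1}$ (using $g^\rr_n(1)=n!$ at $q=1$). Setting $M(x)=\sum_{n\ge1}\frac{d}{dq}g^\rr_n\big|_{q=1}\,\frac{x^{n-1}}{(n-1)!}= \frac{\partial}{\partial q}G^\rr(x)\big|_{q=1}$, the coefficient of $x^{n-1}$ in $M(x)$ is $n\,\mu_n$, so it suffices to show
\[
M(x)=\sum_{n\ge3}\frac{n^2+3n-6}{6}\cdot\frac{x^{n-1}}{(n-1)!}.
\]

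First I would simplify the setup by working with $\theta=1-q$ instead of $q$; since $\theta=0$ at $q=1$ and $\frac{d}{dq}=-\frac{d}{d\theta}$, it is cleaner to expand $G^\rr(x)$ to first order in $\theta$ near $\theta=0$. Rather than wrestle with the trigonometric closed form involving $\beta=\sqrt{\theta(4-\theta)}$ and $\gamma=\arccos(\sqrt\theta/2)$, which is singular-looking but in fact analytic in $\theta$ at $0$, I would instead differentiate the defining differential equation for $H(x)$ in the proof of Theorem~\ref{thm:123-main}. Write $H(x)=H(x;\theta)$ and $B(x)=B^\rr(x;\theta)$. At $\theta=0$ one has $B^\rr(x;0)=1-x-\tfrac{x^2}{2}$ from \eqref{fm:123-B} (equivalently $b^\rr_0=-1$, $b^\rr_1=-1$, $b^\rr_2=0$, $b^\rr_i=0$ for $i\ge 2$ when $\theta=0$, since $b^\rr_i=-\theta^{(i-1)/2}U_{i+1}(\sqrt\theta/2)$ and $U_{i+1}(0)=0$ for $i$ even, while the $\theta^{(i-1)/2}$ factor kills the odd ones for $i\ge3$). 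Feeding $B(x;0)$ into the ODE
\[
2\Bigl(\theta-\theta B+B''\Bigr)H+\Bigl(1+\theta x+3B'-\theta\!\int_0^x\! B\Bigr)H'+(B-1)H''=0
\]
with $H(0)=2$, $H'(0)=6-4\theta$, and solving at $\theta=0$ gives $H(x;0)$; this recovers Corollary through the known count $g^\rr_n|_{q=1}=n!$, i.e. $H(x;0)=1/(1-x)^2$, which is a useful consistency check.

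The main computation is the first-order term: differentiate the whole ODE with respect to $\theta$, evaluate at $\theta=0$, and obtain a linear ODE for $\dot H(x):=\frac{\partial}{\partial\theta}H(x;\theta)\big|_{\theta=0}$ whose inhomogeneous terms involve only the already-known $H(x;0)$, $B(x;0)$, and $\dot B(x):=\frac{\partial}{\partial\theta}B^\rr(x;\theta)\big|_{\theta=0}$ (computable from \eqref{fm:123-B} by a routine Taylor expansion in $\theta$, or from $\dot b^\rr_i$). The boundary conditions become $\dot H(0)=0$ and $\dot H'(0)=-4$ from $H'(0)=6-4\theta$. Solving this first-order-in-$\theta$ linear ODE — I expect it to be first order in $\frac{d}{dx}$ after using $(B(x;0)-1)=-(x+\tfrac{x^2}{2})$ and integrating factors — yields $\dot H(x)$ as an explicit rational function, and then $M(x)=-\dot H(x)$. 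Matching the Taylor coefficients of $M(x)$ against $\sum_{n\ge3}\frac{n^2+3n-6}{6}\frac{x^{n-1}}{(n-1)!}$ (equivalently, checking $M(x)$ equals the generating function whose $x^{n-1}/(n-1)!$ coefficient is $\frac{n(n^2+3n-6)}{6}$, i.e. a simple rational function with denominator a power of $1-x$) finishes the proof. The main obstacle is purely mechanical: carrying the $\theta$-differentiation of the ODE through cleanly and correctly identifying $\dot B(x)$, since \eqref{fm:123-B} disguises its analyticity at $\theta=0$; the antidote is to bypass the trig form entirely and compute $\dot b^\rr_i$ directly from $b^\rr_i=-\theta^{(i-1)/2}U_{i+1}(\sqrt\theta/2)$, giving $\dot b^\rr_0 = \tfrac12$, $\dot b^\rr_1=-\tfrac12$, $\dot b^\rr_2=-1$, $\dot b^\rr_3=0$, and $\dot b^\rr_i=0$ for $i\ge 4$, so that $\dot B(x)=\tfrac12-\tfrac12 x-\tfrac12 x^2$ is in fact a polynomial and all computations stay elementary.
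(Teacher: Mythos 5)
Your strategy --- extract the mean by differentiating $G^{\rr}(x)$ (via its defining ODE) with respect to $q$ at $q=1$ --- is exactly the route the paper takes for this corollary (the paper also supplies a separate combinatorial proof via $\mathrm{total}(123)=\mathrm{total}(\text{ascents})-\mathrm{total}(\text{ascents at end})-\mathrm{total}(\text{peaks})$, which sidesteps the generating function entirely). The plan is viable. However, essentially every concrete input you propose to feed into that plan is wrong, so the computation as written would not close.

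Specifically: from $b^{\rr}_i=-\theta^{(i-1)/2}U_{i+1}(\sqrt\theta/2)$ one gets $b^{\rr}_0\equiv-1$ (so $\dot b^{\rr}_0=0$, not $\tfrac12$), $b^{\rr}_1=-U_2(\sqrt\theta/2)=1-\theta$ (so $b^{\rr}_1(0)=+1$, not $-1$, and $\dot b^{\rr}_1=-1$, not $-\tfrac12$), $b^{\rr}_2=2\theta-\theta^2$ (so $\dot b^{\rr}_2=+2$, not $-1$), and $b^{\rr}_3=-\theta(\theta^2-3\theta+1)$ (so $\dot b^{\rr}_3=-1$, not $0$). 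Hence $B^{\rr}(x;0)=x$, not $1-x-\tfrac{x^2}{2}$ (your value even contradicts your own claim that $b^{\rr}_2(0)=0$, and $B^{\rr}$ has no constant term by definition), and
\[
\dot B^{\rr}(x)=-x+x^2-\tfrac{x^3}{6},
\]
not $\tfrac12-\tfrac{x}{2}-\tfrac{x^2}{2}$. Your consistency check is also off: since $H=\frac{d}{dx}G^{\rr}$ and $G^{\rr}(x)\big|_{q=1}=(1-x)^{-2}$, one has $H(x;0)=2(1-x)^{-3}$, not $(1-x)^{-2}$. Finally, $-\dot H=\frac{\partial}{\partial q}\frac{d}{dx}G^{\rr}\big|_{q=1}=M'(x)$, so $M(x)=-\int_0^x\dot H(t)\,dt$ rather than $-\dot H(x)$, and the target identity should read $M(x)=\sum_{n\ge3}\frac{n^2+3n-6}{6}\,x^{n-1}$ (no $1/(n-1)!$), since the coefficient of $x^{n-1}/(n-1)!$ in $M$ is $n!\,\mu_n$. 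With these corrections the method goes through, but as stated the proposal's "elementary inputs'' would produce the wrong answer, so the proof is not yet correct.
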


\subsection{$321$-Subwords}\label{ssec:321}

Let $n\ge5$.
Then the recurrence (\ref{rec}) reads
for any $3\le k\le n-2$ as
\[
g^\dd_n\bigl(1(k+2)\bigr)
-2g^\dd_n\bigl(1(k+1)\bigr)
+g^\dd_n(1k)
+\theta \Bigl[
g^\dd_{n-1}\bigl(1(k+1)\bigr)
- g^\dd_{n-1}(1k)
+g^\dd_{n-2}(1k)
\Bigr]=0.
\]
It is easy to verify
$g^\dd_n(13)=g^\dd_{n-1}$ and
$g^\dd_n(14)=g^\dd_{n-1}-2\theta\cdotp g^\dd_{n-3}$.
Similar to the proof of Theorem~\ref{thm:123-g1k} above, one can derive a formula for $g_n^\dd$ as follows.
\begin{theorem}\label{thm:321-g1k}
For any $n\ge k\ge 3$, we have
\[
g^\dd_n(1k)=\sum_{i=1}^{k-1}\alpha_{k,\,i}\cdot b^\dd_i\cdot g^\dd_{n-i},
\]
where
$b^\dd_i=(-1)^{i}\theta^{(i-1)/2}U_{i-3}\bigl(\sqrt\theta /2\bigr)$.
Moreover,
$G^\dd(x)=\bigl(1-B^\dd(x)\bigr)^{-2}$,
where
\begin{align*}
B^\dd(x)&=1+\frac{(\sqrt\theta -\sqrt{\theta-4})^2e^{-(\sqrt\theta +\sqrt{\theta-4})\sqrt\theta  x/2}
-(\sqrt\theta +\sqrt{\theta-4})^2e^{-(\sqrt\theta -\sqrt{\theta-4})\sqrt\theta  x/2}}
{4\sqrt{\theta(\theta-4)}}.
\end{align*}
Consequently, for any $n\geq1$, we have
\[
g^\dd_n
=(\theta-4)\sqrt\theta ^{n+1}
\sum_{j\geq1}\ j\!\cdot\!
\left({\sqrt\theta +\sqrt{\theta-4}\over 2}\right)^{4j+4}\!\!\cdot\!
\Bigl(\sqrt\theta +j\sqrt{\theta-4}\Bigr)^{n-1}.
\]
\end{theorem}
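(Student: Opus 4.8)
The proof divides into three parts following the three assertions, and at several points it reuses the machinery developed for $123$-subwords in Section~\ref{ssec:123}. Throughout I abbreviate $a=\sqrt\theta-\sqrt{\theta-4}$ and $b=\sqrt\theta+\sqrt{\theta-4}$, so that $ab=4$ and $b^2-a^2=4\sqrt{\theta(\theta-4)}$. For the first assertion I would argue exactly as in the proof of Theorem~\ref{thm:123-g1k}: one checks that the ansatz $g^\dd_n(1m)=\sum_i\alpha_{m,i}b^\dd_i g^\dd_{n-i}$ reproduces the two recorded initial values $g^\dd_n(13)=g^\dd_{n-1}$ and $g^\dd_n(14)=g^\dd_{n-1}-2\theta g^\dd_{n-3}$, and then that it satisfies the $\dd$-specialization of~(\ref{rec}) displayed immediately before this theorem. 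Substituting the ansatz and using the Pascal-type identities $\alpha_{m,i}-\alpha_{m-1,i}=\alpha_{m-1,i-1}$ (whence $\alpha_{m,i}-2\alpha_{m-1,i}+\alpha_{m-2,i}=\alpha_{m-2,i-2}$), the left-hand side of that recurrence should collapse to $\sum_i\alpha_{k,i-2}\bigl[b^\dd_i+\theta(b^\dd_{i-1}+b^\dd_{i-2})\bigr]g^\dd_{n-i}$, and the bracket vanishes because
\[
b^\dd_i+\theta(b^\dd_{i-1}+b^\dd_{i-2})=(-1)^i\theta^{(i-1)/2}\Bigl[U_{i-3}\bigl(\tfrac{\sqrt\theta}{2}\bigr)-\sqrt\theta\,U_{i-4}\bigl(\tfrac{\sqrt\theta}{2}\bigr)+U_{i-5}\bigl(\tfrac{\sqrt\theta}{2}\bigr)\Bigr]=0
\]
by the Chebyshev recurrence~(\ref{def:ChebyshevPolynomial}); this mirrors the corresponding computation for $b^\rr_i$.

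The heart of the argument is the second assertion, and the plan is first to extract from the formula just proved a self-contained recurrence for the sequence $(g^\dd_n)$ alone, which I expect to take the clean form $g^\dd_n=\sum_{i\ge1}\alpha_{n+1,i+1}b^\dd_i g^\dd_{n-i}$ for $n\ge1$ (with $g^\dd_0:=1$). To derive it I would start from $g^\dd_n=g^\dd_n(12)+\sum_{k=3}^n g^\dd_n(1k)$; here $g^\dd_n(12)=2g^\dd_{n-1}$, since a permutation whose flattening begins with $12$ has the letter $2$ either inside its first cycle or as the start of its second cycle (the first cycle being $(1)$), and in both cases deleting $2$ and relabelling preserves the $\dd$-statistic because the prefix $12$ never participates in a $321$-subword. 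Inserting the formula from the first part, interchanging the two summations, and evaluating $\sum_k\alpha_{k,i}$ by the hockey-stick identity, the value $b^\dd_1=1$ should force the surplus $g^\dd_{n-1}$-terms to cancel and leave the displayed recurrence. (Alternatively, parallelling the corollary after Theorem~\ref{thm:123-g1k}, one can first establish $g^\dd_n(1n)=g^\dd_n-\sum_i\alpha_{n,i+1}b^\dd_i g^\dd_{n-i}$ and substitute it into the $\dd$-recurrence taken at $k=n-2$.)

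With that recurrence available, I would pass to exponential generating functions. Writing $\alpha_{n+1,i+1}=\binom{n}{i}-\binom{n-2}{i-2}$ and $P(x)=\sum_{n\ge0}g^\dd_n x^n/n!$, the binomial $\binom{n}{i}$ contributes the Cauchy product $B^\dd(x)P(x)$, where $B^\dd(x)=\sum_{n\ge1}b^\dd_n x^n/n!$ has the closed form stated in the theorem (obtained from~(\ref{fm:U}) just as $B^\rr(x)$ was computed in Section~\ref{ssec:123}), while the binomial $\binom{n-2}{i-2}$ contributes the iterated antiderivative $\int_0^x\!\int_0^t (B^\dd)''(s)P(s)\,ds\,dt$. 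The recurrence thus becomes
\[
P(x)-1=B^\dd(x)P(x)-\int_0^x\!\int_0^t (B^\dd)''(s)P(s)\,ds\,dt.
\]
Differentiating twice kills the double integral and gives $(1-B^\dd)P''=2(B^\dd)'P'$, a first-order separable equation for $P'$ whose solution is $P'=c\,(1-B^\dd)^{-2}$; the condition $P'(0)=g^\dd_1=1$ together with $B^\dd(0)=0$ pins down $c=1$. Since $P'(x)=\sum_{n\ge0}g^\dd_{n+1}x^n/n!=G^\dd(x)$, this is precisely $G^\dd(x)=(1-B^\dd(x))^{-2}$.

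The third assertion is then pure coefficient extraction. From the closed form, $1-B^\dd(x)=\dfrac{b^2e^{-a\sqrt\theta x/2}-a^2e^{-b\sqrt\theta x/2}}{4\sqrt{\theta(\theta-4)}}$; factoring one exponential out of the numerator and expanding $(1-u)^{-2}=\sum_{j\ge0}(j+1)u^j$ rewrites $(1-B^\dd(x))^{-2}$ as a series of pure exponentials $e^{\lambda_j x}$, after which $g^\dd_n=[x^{n-1}/(n-1)!]\,G^\dd(x)$ follows termwise from $[x^{n-1}/(n-1)!]\,e^{\lambda x}=\lambda^{n-1}$, the prefactors being simplified using $ab=4$. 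The first and third parts are routine — the first is a near-verbatim copy of an argument already in the paper and the third is bare bookkeeping — so the main obstacle is the first half of the second part: assembling the self-contained recurrence for $g^\dd_n$ (carrying out the hockey-stick evaluation of $\sum_k\alpha_{k,i}$ and noticing the cancellation forced by $b^\dd_1=1$) and recognising that, under the exponential generating function, the term $\binom{n-2}{i-2}$ becomes a double antiderivative; once the integro-differential equation for $P$ is in hand, everything after it is elementary.
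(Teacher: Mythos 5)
Your overall strategy is the one the paper intends (the paper itself offers no proof of this theorem beyond ``similar to the proof of Theorem~\ref{thm:123-g1k}''), and the individual computations you describe do check out: the identity $b^\dd_i+\theta(b^\dd_{i-1}+b^\dd_{i-2})=(-1)^i\theta^{(i-1)/2}\bigl[U_{i-3}(\sqrt\theta/2)-\sqrt\theta\,U_{i-4}(\sqrt\theta/2)+U_{i-5}(\sqrt\theta/2)\bigr]=0$ is correct, $g^\dd_n(12)=2g^\dd_{n-1}$ is correct, the hockey-stick evaluation together with $b^\dd_1=1$ does produce $g^\dd_n=\sum_i\alpha_{n+1,i+1}b^\dd_i g^\dd_{n-i}$ for $n\ge2$ (consistent with the analogous descent recurrence recorded in the Conclusion), and the passage to the integro-differential equation and the separable first-order equation for $P'$ is sound. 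The one step you cannot simply copy from the $123$ case is the validity of the closed form for $g^\dd_n(1k)$ at $k=n$: your derivation of the self-contained recurrence sums that closed form over the \emph{full} range $3\le k\le n$, whereas the verification scheme of Theorem~\ref{thm:123-g1k} (initial values at $k=3,4$ plus the recurrence) only delivers the formula for $k\le n-1$, and for $\rr$ it genuinely fails at $k=n$ --- which is why the paper needs the separate expression $g^\rr_n(1n)=-\sum_{i}\alpha_{n,i+1}b^\rr_i g^\rr_{n-i}$. To reach $k=n$ for $\dd$ you must run an induction on $n$: the recurrence at its top value pulls in $g^\dd_{n-1}\bigl(1(n-1)\bigr)$ and $g^\dd_{n-2}\bigl(1(n-2)\bigr)$, so you need the formula at the top of the range at the two previous levels, and the induction only closes because the initial identities $g^\dd_n(13)=g^\dd_{n-1}$ and $g^\dd_n(14)=g^\dd_{n-1}-2\theta g^\dd_{n-3}$ hold all the way down to $n=3$ and $n=4$ (one checks directly that $g^\dd_3(13)=2=g^\dd_2$ and $g^\dd_4(14)=6-2\theta$), in contrast with the $123$ case, where the $k=4$ identity in~(\ref{ini:123-g1i}) is only valid for $n\ge5$. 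This point is the real content of the hypothesis ``$n\ge k\ge3$'' and should be made explicit.

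A second caveat concerns the final coefficient extraction. Carrying out your plan (factor $b^2e^{-a\sqrt\theta x/2}$ out of $1-B^\dd$, expand $(1-u)^{-2}=\sum_{j\ge0}(j+1)u^j$, and use $ab=4$ so that $16/b^4=(a/2)^4$ and $(a/b)^{2j}=(a/2)^{4j}$) yields
\[
g^\dd_n=(\theta-4)\sqrt\theta^{\,n+1}\sum_{j\ge1}j\left(\frac{\sqrt\theta-\sqrt{\theta-4}}{2}\right)^{4j}\bigl(\sqrt\theta-j\sqrt{\theta-4}\bigr)^{n-1},
\]
which one can confirm against $g^\dd_1=1$ and $g^\dd_2=2$. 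This is not literally the displayed formula (exponent $4j$ versus $4j+4$, and the opposite branch of $\sqrt{\theta-4}$), and the two do not agree: summing the displayed series as a rational function, its value at $n=1$ is $\bigl((\sqrt\theta+\sqrt{\theta-4})/2\bigr)^4$ rather than $1$. So the ``bare bookkeeping'' will not reproduce the statement verbatim; the theorem's final display appears to contain a typo, and you should record the corrected expression rather than assume the routine computation lands on the printed one.
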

Similar to~(\ref{fm:123-B}), we have
\[
B^\dd(x)=1-\frac {2{{\rm e}^{-\theta x/2}}}{\sqrt{\theta(4-\theta)}}
\cos\Biggl( \frac{\sqrt{\theta(4-\theta)}x}{2}+\arcsin\biggl( \frac{2-\theta}{2}\biggr) \Biggr).
\]
From this, one may obtain another expression for $g^\dd_n$ as
\[
g^\dd_n=(-\theta)^{n+1}\sum_{j\geq1}\sum_{\ell=1}^{2j-1}\frac{B_{2j}(-4)^j(1-4^j)\binom{n-1}{\ell-1}}{2j(2j-1-\ell)!}\left( \frac{\sqrt{4-\theta}}{-2\sqrt{\theta}}\right)^{\ell+1}\arcsin^{2j-1-\ell}\left( \frac{2-\theta}{2}\right),
\]
where $B_n$ is the $n$-th Bernoulli number.

\begin{corollary}\label{cor321}
If $n \geq 2$, then the average number of occurrences of $321$-subwords in $\text{Flatten}(\pi)$ over $\pi\in S_n$ is given by $\frac{(n-2)(n-3)}{6n}$.
\end{corollary}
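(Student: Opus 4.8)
The plan is to follow the pattern of the proofs of Corollaries~\ref{descor1} and~\ref{cor123}: differentiate the generating function $G^\dd(x)=\sum_{n\ge1}g^\dd_n\,\dfrac{x^{n-1}}{(n-1)!}$ with respect to $q$, set $q=1$, and read off the coefficient of $x^{n-1}$. By Theorem~\ref{thm:321-g1k} we have $G^\dd(x)=\bigl(1-B^\dd(x)\bigr)^{-2}$, so the chain rule gives
\[
\frac{\partial}{\partial q}G^\dd(x)\Big|_{q=1}
=\frac{2}{\bigl(1-B^\dd(x)|_{q=1}\bigr)^{3}}\cdot\frac{\partial}{\partial q}B^\dd(x)\Big|_{q=1}.
\]
Thus everything reduces to computing the two germs at $q=1$, namely $B^\dd(x)|_{q=1}$ and $\partial_qB^\dd(x)|_{q=1}$.

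First I would obtain these from the power series. The derivation of Theorem~\ref{thm:321-g1k} (which parallels that of Theorem~\ref{thm:123-g1k}) in fact yields the identity $B^\dd(x)=\sum_{n\ge1}b^\dd_n\,\dfrac{x^n}{n!}$ with $b^\dd_n=(-1)^n\theta^{(n-1)/2}U_{n-3}(\sqrt\theta/2)$, where $\theta=1-q$. Since $U_m(-t)=(-1)^mU_m(t)$, each $b^\dd_n$ is an honest polynomial in $\theta$; and a short inspection of the lowest-degree monomial of $U_{n-3}$ shows that $b^\dd_n$ is divisible by $\theta^2$ whenever $n\ge4$, whereas $b^\dd_1=1$, $b^\dd_2=0$ (as $U_{-1}=0$), and $b^\dd_3=-\theta$. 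Hence, modulo $\theta^2$, $B^\dd(x)\equiv x-\tfrac{\theta}{6}x^3$, which gives $B^\dd(x)|_{q=1}=x$ and, using $\partial_q=-\partial_\theta$, $\partial_qB^\dd(x)|_{q=1}=\tfrac{x^3}{6}$. (One could instead substitute $\theta=4\sin^2\phi$ into the closed form for $B^\dd(x)$ and expand to order $\phi^2$, but that route is more delicate, as noted below.)

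Substituting back, $\partial_qG^\dd(x)|_{q=1}=\dfrac{2}{(1-x)^3}\cdot\dfrac{x^3}{6}=\dfrac{x^3}{3(1-x)^3}$. Extracting the coefficient of $x^{n-1}$ then gives
\[
\frac{\partial}{\partial q}g^\dd_n\Big|_{q=1}
=(n-1)!\,[x^{n-1}]\,\frac{x^3}{3(1-x)^3}
=(n-1)!\cdot\frac13\binom{n-2}{2}
=(n-1)!\cdot\frac{(n-2)(n-3)}{6}
\]
for every $n\ge2$ (the right-hand side vanishes for $n\in\{2,3\}$, in agreement with the fact that no flattened permutation of length at most $3$ contains a $321$-subword, since $\text{Flatten}(\pi)$ always begins with $1$). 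Dividing by $n!$ shows that the average number of $321$-subwords in $\text{Flatten}(\pi)$ over $\pi\in\mathcal S_n$ equals $\dfrac{(n-2)(n-3)}{6n}$, as claimed. (As promised in the abstract, one can also give a combinatorial proof by summing, over the positions $i$ of the $n-2$ adjacent triples of $\text{Flatten}(\pi)$, the number of $\pi$ for which that triple is decreasing; the algebraic argument above is shorter, however.)

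The one genuinely delicate point is the computation of the germs $B^\dd(x)|_{q=1}=x$ and $\partial_qB^\dd(x)|_{q=1}=x^3/6$: the closed form of $B^\dd(x)$ is singular at $q=1$ (the argument of $\arcsin$ sits at the boundary of its domain, and the expression carries a prefactor $\bigl(\theta(4-\theta)\bigr)^{-1/2}$), so a naive term-by-term Taylor expansion there is not legitimate. Working instead with the polynomials $b^\dd_n$ bypasses this entirely, reducing the whole matter to the elementary observation about the parity and the lowest-order term of the Chebyshev polynomials $U_{n-3}$.
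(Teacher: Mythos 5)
Your proposal is correct and follows essentially the same route as the paper: differentiate $G^\dd(x)$ with respect to $q$, set $q=1$ to obtain $\frac{x^3}{3(1-x)^3}$, and extract the coefficient of $x^{n-1}$. The only difference is that you justify the evaluation at $q=1$ more carefully (via the polynomials $b^\dd_n$ rather than the closed form of $B^\dd$, which is indeed delicate at $\theta=0$), a step the paper's proof leaves implicit.
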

\begin{proof}
Differentiating $G^\dd(x)$ with respect to $q$ and substituting $q=1$ yields
$$\frac{d}{dq}G^\dd(x)\mid_{q=1}=\frac{x^3}{3(1-x)^3}.$$
Thus, the total number of occurrences of $321$-subwords in  $\text{Flatten}(\pi)$ over all $\pi\in S_n$ is given by $(n-2)!\binom{n-1}{3}$ for $n\geq2$, which completes the proof.
\end{proof}

\subsection{Peaks}\label{ssec:peak}

Let $n\ge5$.
Then the  recurrence (\ref{rec}) reads
for any $3\le k\le n-2$ as
\[
g^\pp_n\bigl(1(k+2)\bigr)
-2g^\pp_n\bigl(1(k+1)\bigr)
+g^\pp_n(1k)
-\theta\cdotp g^\pp_{n-2}(1k)=0.
\]
With the initiation
$g^\pp_n(13)=g^\pp_{n-1}-2\theta\cdotp g^\pp_{n-2}$ and
$g^\pp_n(14)=g^\pp_{n-1}-3\theta\cdotp g^\pp_{n-2}+2\theta\cdotp g^\pp_{n-3}$,
we can derive the next result.
\begin{theorem}\label{thm:peak-gf}
For any $n\ge k\ge 3$, we have
\[
g^\pp_n(1k)=\sum_{i=1}^{k-1}\beta_{k,\,i}\cdot b^\pp_i\cdot g^\pp_{n-i},
\]
where
$\beta_{k,\,i}={k-2\choose i-1}+{k-3\choose i-2}$
and $b^\pp_i=(-1)^{i+1}\theta^{\lfloor i/2\rfloor}$.
Moreover,
$G^\pp(x)=\bigl(1-B^\pp(x)\bigr)^{-2}$,
where
\[
B^\pp(x)=\sum_{n\geq1}b^\pp_n\frac{x^n}{n!}
=1+{\sinh(\sqrt\theta  x)\over \sqrt\theta}-\cosh(\sqrt\theta  x).
\]
\end{theorem}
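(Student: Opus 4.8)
The plan is to mirror almost verbatim the argument used for Theorem~\ref{thm:123-g1k} and its corollary, with the simplifications afforded by the fact that the peak recurrence is the cleanest of the three. First I would verify the explicit formula $g^\pp_n(1k)=\sum_{i=1}^{k-1}\beta_{k,i}\,b^\pp_i\,g^\pp_{n-i}$ by induction on $k$: one checks the two base cases $k=3,4$ directly against the stated initiation $g^\pp_n(13)=g^\pp_{n-1}-2\theta g^\pp_{n-2}$ and $g^\pp_n(14)=g^\pp_{n-1}-3\theta g^\pp_{n-2}+2\theta g^\pp_{n-3}$ (noting $\beta_{3,1}=1,\ \beta_{3,2}=1,\ \beta_{4,1}=1,\ \beta_{4,2}=3,\ \beta_{4,3}=2$ and $b^\pp_1=1,\ b^\pp_2=-1,\ b^\pp_3=\theta$), and then one shows the ansatz satisfies the recurrence $g^\pp_n(1(k+2))-2g^\pp_n(1(k+1))+g^\pp_n(1k)-\theta g^\pp_{n-2}(1k)=0$. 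The key combinatorial identity here is the Pascal-type relation $\beta_{k+2,i}-2\beta_{k+1,i}+\beta_{k,i}=\beta_{k,i-2}$, together with the two-term relation $b^\pp_i+\theta b^\pp_{i-2}=0$ for $i\ge3$ (which is immediate from $b^\pp_i=(-1)^{i+1}\theta^{\lfloor i/2\rfloor}$ since $\lfloor i/2\rfloor=\lfloor(i-2)/2\rfloor+1$), so that after reindexing the telescoped sum every coefficient is $\beta_{k,i-2}(b^\pp_i+\theta b^\pp_{i-2})=0$.

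Next I would extract $g^\pp_n$ itself. Exactly as in the $123$-case, I write $g^\pp_n(1n)=g^\pp_n-g^\pp_n(12)-\sum_{3\le k\le n-1}g^\pp_n(1k)$, use $g^\pp_n(12)=g^\pp_{n-1}$ (with the appropriate $q$-weight; note that for peaks the reduction~(\ref{eq:cls-0}) carries no $\theta$ correction since $\chi(\s=\rr)=0$), substitute the formula from the theorem, and collect terms to get a closed expression for $g^\pp_n(1n)$; then plugging $k=n$ into the recurrence produces a linear recurrence for $g^\pp_n$ of exactly the shape $g^\pp_n=\sum_i g^\pp_{n-i}(\text{coefficients built from }\beta\text{ and }b^\pp)$. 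Multiplying by $x^{n-1}/(n-1)!$ and summing translates this into an identity among $G^\pp(x)$, $B^\pp(x)$ and their derivatives; because $b^\pp_i$ satisfies the simple two-term recurrence, $B^\pp(x)=\sum_{n\ge1}b^\pp_n x^n/n!$ evaluates in closed form as $1+\sinh(\sqrt\theta x)/\sqrt\theta-\cosh(\sqrt\theta x)$ (the odd part contributes $\sinh/\sqrt\theta$ from $b^\pp_{2m+1}=\theta^m$, the even part $-(\cosh-1)$ from $b^\pp_{2m}=-\theta^{m-1}$, and one checks $b^\pp_1=1,b^\pp_2=-1$ match the Taylor coefficients). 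The functional equation then collapses — just as the $123$-equation collapsed to a first-order linear ODE for $H=G'$ — but here to the algebraic relation $G^\pp(x)\bigl(1-B^\pp(x)\bigr)^2=1$, i.e. $G^\pp(x)=(1-B^\pp(x))^{-2}$, with the boundary value $G^\pp(0)=1$ (consistent with $B^\pp(0)=0$) pinning down the branch.

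The main obstacle — really the only nontrivial point — is the bookkeeping in the passage from the three-letter data $g^\pp_n(1k)$ to the single sequence $g^\pp_n$ and then to the generating-function identity: one must handle the edge terms $b^\pp_{-1}=-1/\theta$, $b^\pp_0=-1$ correctly (coming from the $\theta g^\pp_{n-2}(1k)$ term when $i$ hits the bottom of its range) and verify that the double sum $\sum_n\sum_i$ reassembles as a product of the two generating series $G^\pp$ and $B^\pp$ rather than a convolution with leftover boundary pieces. Since the peak recurrence has no $g^\pp_{n-1}(1\cdot)$ terms at all (unlike the $123$- and $321$-cases), this reassembly is genuinely cleaner here and I expect it to go through with essentially the same manipulations as in the proof of the Corollary to Theorem~\ref{thm:123-g1k}; the final Bernoulli-type expansion for $g^\pp_n$, if desired, then follows by expanding $(1-B^\pp(x))^{-2}$ as a geometric-type series in $\sinh$ and $\cosh$ and reading off Taylor coefficients, an exercise I would leave to the reader exactly as the authors do elsewhere.
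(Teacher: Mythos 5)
Your plan is the same one the paper uses: the authors prove this theorem ``similar to the proof of Theorem~\ref{thm:123-g1k}'' (induction on $k$ via the Pascal-type relation $\beta_{k+2,i}-2\beta_{k+1,i}+\beta_{k,i}=\beta_{k,i-2}$ and a two-term relation for $b^\pp$), and the generating-function step is carried out explicitly for the structurally identical valley case, where $G'=(BG)'+B'G$ integrates to $G(1-B)^2=\mathrm{const}=1$; so in outline the proposal is correct. However, several of the constants you quote are wrong, and as literally written your base-case check does not close: from $b^\pp_i=(-1)^{i+1}\theta^{\lfloor i/2\rfloor}$ one has $b^\pp_2=-\theta$ (not $-1$), and $\beta_{3,2}=\binom{1}{1}+\binom{0}{0}=2$ (not $1$), which is exactly what reproduces the $-2\theta\, g^\pp_{n-2}$ term in the initiation. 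Likewise, since $(-1)^{i+1}=(-1)^{(i-2)+1}$, the two-term relation is $b^\pp_i-\theta\, b^\pp_{i-2}=0$, not $b^\pp_i+\theta\, b^\pp_{i-2}=0$; fortunately the recurrence produces precisely the combination $\beta_{k,i-2}\bigl(b^\pp_i-\theta\, b^\pp_{i-2}\bigr)$ after reindexing the $-\theta\, g^\pp_{n-2}(1k)$ term, so the cancellation goes through once the sign is fixed. Finally, the edge values you cite ($b^\pp_{-1}=-1/\theta$, $b^\pp_0=-1$) are those of the $123$ case and are not needed here, since, as you note, the peak recurrence for $g^\pp_n$ has no lower-order boundary corrections.
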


We shall give two formulas for~$g_n^\pp$. For the first one, we will need the following proposition whose proof is straightforward.

\begin{proposition}\label{prop:PB}
Let $G(x)=\sum_{n\ge0}g_{n+1}{x^n\over n!}$ and $B(x)=\sum_{n\ge1}b_n{x^n\over n!}$
be two generating functions. If
$G(x)=\bigl(1-B(x)\bigr)^{-2}$,
then for any $n\ge1$, we have
\[
g_{n+1}
=\sum_{k=1}^n(k+1)\sum_{i_1+i_2+\cdots+i_k=n\atop{i_1,\,i_2,\,\ldots,\,i_k\ge1}}
{n\choose i_1,\,i_2,\,\ldots,\,i_k}
b_{i_1}b_{i_2}\cdots b_{i_k}.
\]
\end{proposition}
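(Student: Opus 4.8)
The plan is to treat all of the series as formal power series in $x$ and to invoke the standard product rule for exponential generating functions. First I would observe that $B(x)$ has no constant term, since the sum defining it runs over $n\ge1$; thus $B(0)=0$, and the substitution $y=B(x)$ into the generalized binomial series $(1-y)^{-2}=\sum_{k\ge0}(k+1)y^k$ is legitimate, yielding
\[
G(x)=\frac{1}{\bigl(1-B(x)\bigr)^{2}}=\sum_{k\ge0}(k+1)\,B(x)^{k}.
\]
Because $B(x)$ is divisible by $x$, the power $B(x)^{k}$ is divisible by $x^{k}$; hence for a fixed $n$ only the terms with $k\le n$ contribute to the coefficient of $x^{n}$, so the sum over $k$ is finite in each degree and no analytic convergence question arises.

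Next I would record the exponential-generating-function product formula: for every $k\ge1$,
\[
B(x)^{k}=\Bigl(\sum_{i\ge1}b_i\frac{x^{i}}{i!}\Bigr)^{k}
=\sum_{n\ge k}\Biggl(\;\sum_{\substack{i_1+i_2+\cdots+i_k=n\\ i_1,i_2,\ldots,i_k\ge1}}\binom{n}{i_1,i_2,\ldots,i_k}\,b_{i_1}b_{i_2}\cdots b_{i_k}\Biggr)\frac{x^{n}}{n!},
\]
where $\binom{n}{i_1,\ldots,i_k}=n!/(i_1!\cdots i_k!)$. This follows at once by multiplying out the $k$ factors and collecting the coefficient of $x^{n}/n!$, the multinomial coefficient appearing exactly as in the usual argument that the product of EGFs enumerates the associated labelled configurations.

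Finally, I would combine the two displays: extracting the coefficient of $x^{n}/n!$ on both sides of $G(x)=\sum_{k\ge0}(k+1)B(x)^{k}$, and recalling that $g_{n+1}$ is by definition that coefficient, I obtain the asserted identity, with the $k=0$ term contributing nothing when $n\ge1$ and the terms $k>n$ vanishing automatically. Every step is a routine manipulation of formal power series, so there is no genuine obstacle; the only points needing a little care are the remark $B(0)=0$, which simultaneously justifies the binomial expansion and makes the sum over $k$ finite in each degree, and keeping the multinomial indexing straight when expanding $B(x)^{k}$.
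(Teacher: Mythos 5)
Your proof is correct, and it is the natural argument the paper has in mind when it calls the proposition "straightforward" and omits the proof: expand $(1-B(x))^{-2}=\sum_{k\ge0}(k+1)B(x)^k$, which is legitimate since $B(0)=0$, apply the exponential-generating-function product formula to $B(x)^k$, and extract the coefficient of $x^n/n!$. Nothing further is needed.
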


For the second formula, we need a sequence $t_{n,m}$ defined by
\begin{align*}
t_{n,m}&=[x^n](x^2\cot(x))^m\\
&=2^{n-2m}(-1)^{(n-m)/2}m!\sum_{\ell=0}^m\sum_{k=0}^{n-2m+1}\frac{2^\ell k!s_1(\ell+k,\ell)s_2(n-2m+\ell,k)}{(m-\ell)!(\ell+k)!(n-2m+\ell)!},
\end{align*}
where $s_1(n,k)$ and $s_2(n,k)$ are the Stirling numbers of the first and second kind, respectively; see Sequence A199542 in \cite{OEIS}.
It follows that
\begin{equation}\label{eq:t}
[x^n](x^2\coth(x))^m=(-1)^{(n-m)/2}t_{n,m}.
\end{equation}

Now we can give the two formulas for~$g_n^\pp$.
\begin{theorem}
For any $n\geq1$, we have
\begin{align}
g_n^\pp
&=\sum_{k=1}^{n-1}(k+1)\sum_{i_1+i_2+\cdots+i_k=n-1\atop{i_1,\,i_2,\,\ldots,\,i_k\ge1}}
(-1)^{n+k-1}{n-1\choose i_1,\,i_2,\,\ldots,\,i_k}
\theta^{\lfloor i_1/2\rfloor+\lfloor i_2/2\rfloor+\cdots+\lfloor i_k/2\rfloor}\label{fm:peak-1}\\
&=n!\sum_{j\geq1}(-1)^{(n+j+2)/2}\cdot \sqrt\theta ^{n+j}t_{n+2j,\,j}.\label{fm:peak-2}
\end{align}
\end{theorem}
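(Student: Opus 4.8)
The plan is to extract the coefficients of $G^\pp(x)=\bigl(1-B^\pp(x)\bigr)^{-2}$ in two different ways, corresponding to the two expansions. For~\eqref{fm:peak-1}, I would apply Proposition~\ref{prop:PB} directly with $B(x)=B^\pp(x)$ and the coefficient sequence $b_i=b^\pp_i=(-1)^{i+1}\theta^{\lfloor i/2\rfloor}$ furnished by Theorem~\ref{thm:peak-gf}. Substituting $b_{i_1}b_{i_2}\cdots b_{i_k}=(-1)^{(i_1+1)+\cdots+(i_k+1)}\theta^{\lfloor i_1/2\rfloor+\cdots+\lfloor i_k/2\rfloor}=(-1)^{(n-1)+k}\theta^{\sum\lfloor i_j/2\rfloor}$ (using $i_1+\cdots+i_k=n-1$) and shifting the index $n\mapsto n-1$ in Proposition~\ref{prop:PB} gives exactly~\eqref{fm:peak-1}; the sign $(-1)^{(n-1)+k}=(-1)^{n+k-1}$ matches. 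This step is essentially bookkeeping once Proposition~\ref{prop:PB} is in hand.

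For~\eqref{fm:peak-2} I would instead work with $B^\pp(x)=1+\sinh(\sqrt\theta x)/\sqrt\theta-\cosh(\sqrt\theta x)$ in closed form, so that $1-B^\pp(x)=\cosh(\sqrt\theta x)-\sinh(\sqrt\theta x)/\sqrt\theta$. The idea is to factor out $\cosh$: write $1-B^\pp(x)=\cosh(\sqrt\theta x)\bigl(1-\tfrac{1}{\sqrt\theta}\tanh(\sqrt\theta x)\bigr)$, but it is cleaner to pass to the reciprocal directly. One has $G^\pp(x)=\bigl(1-B^\pp(x)\bigr)^{-2}$, and I would manipulate this into a form involving $x^2\coth$ evaluated at an argument proportional to $\sqrt\theta x$, then expand the resulting power series using the auxiliary sequence $t_{n,m}$ together with~\eqref{eq:t}. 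Concretely, using $\coth$'s partial-fraction/Bernoulli expansion packaged into $t_{n,m}$, the contribution of $(x^2\coth x)^m$-type terms produces the sum over $j\ge1$ with the weight $\sqrt\theta^{\,n+j}t_{n+2j,j}$ and the alternating sign $(-1)^{(n+j+2)/2}$; the factor $n!$ appears because $g_n^\pp$ is the coefficient of $x^{n-1}/(n-1)!$ in $G^\pp(x)=\sum g^\pp_{n+1}x^n/n!$, i.e.\ of $x^{n-1}$ scaled by $(n-1)!$, and reindexing absorbs this into $n!$.

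The main obstacle is the second formula: turning $\bigl(\cosh(\sqrt\theta x)-\sinh(\sqrt\theta x)/\sqrt\theta\bigr)^{-2}$ into a clean series in $\sqrt\theta^{\,n+j}t_{n+2j,j}$. The natural route is to write $\bigl(1-B^\pp(x)\bigr)^{-1}=\sum_{m\ge0}\bigl(B^\pp(x)-1\bigr)^m=\sum_{m\ge0}\bigl(\tfrac{\sinh(\sqrt\theta x)}{\sqrt\theta}-\cosh(\sqrt\theta x)+1\cdot 0\bigr)^m$ — wait, more carefully, $B^\pp(x)-1=\sinh(\sqrt\theta x)/\sqrt\theta-\cosh(\sqrt\theta x)$, which is $-e^{-\sqrt\theta x}$ corrected by a $\sinh$ term; in fact $\sinh u/\sqrt\theta-\cosh u$ with $u=\sqrt\theta x$ does not telescope as nicely as in the $321$ or $\rr$ cases, so one must instead differentiate: since $G^\pp=\bigl(1-B^\pp\bigr)^{-2}=\tfrac{d}{dx}\bigl[\text{something}\bigr]/\bigl(\text{something}\bigr)$ is awkward, I would instead recognize $\tfrac{1}{(1-B^\pp(x))^2}$ as $\tfrac{d}{dc}\big|_{c=1}$ of a geometric-type object, or more practically, square the known expansion of $\bigl(1-B^\pp(x)\bigr)^{-1}$ whose coefficients are governed by iterated $\cot$/$\coth$ identities. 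I expect the cleanest path is: establish that $\bigl(1-B^\pp(x)\bigr)^{-1}$ has a closed form whose Taylor coefficients are $\sqrt\theta^{\,n}$ times values of $[x^n](x^2\cot x)^{\cdot}$-type expressions, invoke the definition of $t_{n,m}$ and~\eqref{eq:t} to convert $\cot$ to $\coth$ with the sign $(-1)^{(n-m)/2}$, then square and collect. Verifying the index shifts, the parity conditions (which force $n+j$ even in the surviving terms, explaining the half-integer exponents $(n+j+2)/2$), and that the sign bookkeeping is consistent will be the delicate part; once the identity is set up at the level of formal power series, checking a few small values of $n$ against both~\eqref{fm:peak-1} and~\eqref{fm:peak-2} provides a sanity check, and the general case follows by matching generating functions coefficientwise.
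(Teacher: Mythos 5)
Your derivation of \eqref{fm:peak-1} is correct and coincides with the paper's: apply Proposition~\ref{prop:PB} to $G^\pp=(1-B^\pp)^{-2}$ with $b_i=b^\pp_i=(-1)^{i+1}\theta^{\lfloor i/2\rfloor}$ from Theorem~\ref{thm:peak-gf}, shift $n\mapsto n-1$, and note $(-1)^{\sum_j(i_j+1)}=(-1)^{(n-1)+k}$.

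For \eqref{fm:peak-2}, however, there is a genuine gap. You correctly isolate the obstacle --- turning $\bigl(\cosh(\sqrt\theta x)-\sinh(\sqrt\theta x)/\sqrt\theta\bigr)^{-2}$ into a single series weighted by $t_{n+2j,\,j}$ --- but you then set aside the one idea that resolves it. The paper's key observation is the exact-derivative identity
\[
G^\pp(x)=\frac{1}{\bigl(\cosh(\sqrt\theta x)-\sinh(\sqrt\theta x)/\sqrt\theta\bigr)^2}
=\frac{d}{dx}\,\frac{1}{\sqrt\theta\coth(\sqrt\theta x)-1},
\]
which is checked directly from $\frac{d}{dx}\bigl(\sqrt\theta\coth(\sqrt\theta x)\bigr)=-\theta\,\mathrm{csch}^2(\sqrt\theta x)$ and $(\sqrt\theta\coth u-1)^2\sinh^2u=(\sqrt\theta\cosh u-\sinh u)^2$. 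With this in hand, $\frac{1}{\sqrt\theta\coth(\sqrt\theta x)-1}$ expands formally as $-1-\sum_{j\ge1}\sqrt\theta^{\,-j}x^{-2j}\bigl(\theta x^2\coth(\sqrt\theta x)\bigr)^j$, each factor $\bigl(\theta x^2\coth(\sqrt\theta x)\bigr)^j$ becomes $\sum_i(-1)^{(i-j)/2}t_{i,j}\sqrt\theta^{\,i}x^i$ by \eqref{eq:t}, and differentiating and reading off the coefficient of $x^{n-1}$ (which forces $i=n+2j$) yields \eqref{fm:peak-2}, the factor $n!$ arising as $(n-1)!$ times the chain-rule factor $i-2j=n$. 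The substitutes you propose do not deliver this: squaring an expansion of $(1-B^\pp)^{-1}$ fails because $(1-B^\pp)^{-1}=\sqrt\theta\,\mathrm{csch}(\sqrt\theta x)\big/\bigl(\sqrt\theta\coth(\sqrt\theta x)-1\bigr)$ carries an extra $\mathrm{csch}$ factor that blocks a geometric expansion in $\coth$ (and its square gives a double Cauchy product, not a single sum over $j$), while the differentiation-in-an-auxiliary-parameter device is never made concrete. Without the displayed identity, the second formula is not proved.
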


\begin{proof}
The formula~(\ref{fm:peak-1}) follows immediately from Theorem~\ref{thm:peak-gf}
and Proposition~\ref{prop:PB}.
To show~(\ref{fm:peak-2}), we deduce from~(\ref{eq:t}) that
\begin{align*}
G^\pp(x)&=\frac{1}{(1-B(x))^2}=\frac{1}{(\cosh(\sqrt\theta x)-\frac{\sinh(\sqrt\theta x)}{\sqrt\theta })^2}\\
&=\frac{d}{dx}\frac{1}{\sqrt\theta \coth(\sqrt\theta x)-1}
=-\frac{d}{dx}\sum_{j\geq1}\sqrt\theta ^{j}\coth^{j}(\sqrt\theta x)\\
&=-\frac{d}{dx}\sum_{j\geq1}\sqrt\theta ^{\,-j}x^{-2j}\bigl(\theta x^2\coth(\sqrt\theta x)\bigr)^{j}\\
&=\sum_{j\geq1}\sum_{i\geq j}(i-2j)(-1)^{(i-j)/2+1}\sqrt\theta ^{i-j}t_{i,\,j}\cdotp x^{i-2j-1}.
\end{align*}
Extracting the coefficient of $x^{n-1}$ gives~(\ref{fm:peak-2}) and completes the proof.
\end{proof}

\begin{corollary}\label{corpeak}
If $n \geq 2$, then the average number of peaks in $\text{Flatten}(\pi)$ over $\pi\in S_n$ is given by $\frac{n-2}{3}$.
\end{corollary}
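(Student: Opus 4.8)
The plan is to mimic the proofs of Corollaries~\ref{cor123} and~\ref{cor321}: differentiate the closed form $G^\pp(x)=\bigl(1-B^\pp(x)\bigr)^{-2}$ with respect to $q$, then substitute $q=1$ (equivalently $\theta=0$) and read off the coefficient of $x^{n-1}$. First I would record that at $\theta=0$ one has $B^\pp(x)=x$ by the series in Theorem~\ref{thm:peak-gf} (since $b^\pp_1=1$ and $b^\pp_i=0$ for $i\ge2$ when $\theta=0$), so $G^\pp(x)\big|_{q=1}=(1-x)^{-2}$, which correctly gives $g^\pp_n\big|_{q=1}=n!$. Differentiating $G^\pp=(1-B^\pp)^{-2}$ gives $\partial_q G^\pp = 2(1-B^\pp)^{-3}\,\partial_q B^\pp$, and since $\theta=1-q$ we have $\partial_q=-\partial_\theta$; evaluating at $\theta=0$ needs only $\partial_\theta B^\pp\big|_{\theta=0}$.

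The key computation is therefore $\partial_\theta B^\pp(x)\big|_{\theta=0}$. Writing $B^\pp(x)=1+\theta^{-1/2}\sinh(\sqrt\theta x)-\cosh(\sqrt\theta x)$ and expanding in powers of $\theta$ using $\sinh(\sqrt\theta x)=\sqrt\theta x+\theta^{3/2}x^3/6+\cdots$ and $\cosh(\sqrt\theta x)=1+\theta x^2/2+\cdots$, the $\theta^{1/2}$ and $\theta^0$ singular/constant terms cancel against the leading $1$, leaving $B^\pp(x)=x+\theta\bigl(\tfrac{x^3}{6}-\tfrac{x^2}{2}\bigr)+O(\theta^2)$, so $\partial_\theta B^\pp\big|_{\theta=0}=\tfrac{x^3}{6}-\tfrac{x^2}{2}$. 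Hence
\[
\partial_q G^\pp(x)\big|_{q=1}
=-\,2(1-x)^{-3}\Bigl(\tfrac{x^3}{6}-\tfrac{x^2}{2}\Bigr)
=\frac{x^2(3-x)}{3(1-x)^3}.
\]
Extracting the coefficient of $x^{n-1}$ and dividing by $g^\pp_n\big|_{q=1}=n!$ then yields the average.

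The last step is routine bookkeeping: from $\dfrac{x^2(3-x)}{3(1-x)^3}=\dfrac{1}{3}\bigl(3x^2-x^3\bigr)\sum_{m\ge0}\binom{m+2}{2}x^m$ one reads the coefficient of $x^{n-1}$ as $\tfrac{1}{3}\bigl(3\binom{n-1}{2}-\binom{n-2}{2}\bigr)=\tfrac{(n-1)(n-2)\cdot 3 - (n-2)(n-3)}{6}$, which simplifies to $\tfrac{(n-2)\cdot 2n}{6}$; note this equals $(n-2)!\binom{n}{2}\cdot\tfrac{2}{(n-2)!}$ type of expression only after cross-checking, so care is needed here. Dividing by $n!$ gives $\dfrac{(n-2)(2n)/6}{n!}\cdot\dfrac{n!}{n!}$—more transparently, the total number of peaks over $\mathcal S_n$ is $\tfrac{(n-2)n}{3}\,(n-1)!$, so the average is $\tfrac{(n-2)n(n-1)!}{3\,n!}=\tfrac{n-2}{3}$, as claimed. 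The only place any genuine care is required is the cancellation of the $\theta^{-1/2}$ term in $\partial_\theta B^\pp$ at $\theta=0$; once that is handled, everything else is elementary, and the result holds for $n\ge2$ since both sides vanish (the total count is $0$) precisely when $n\le 2$, matching the stated range.
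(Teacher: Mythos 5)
Your proposal is correct and follows essentially the same route as the paper: differentiate $G^\pp(x)=\bigl(1-B^\pp(x)\bigr)^{-2}$ with respect to $q$, evaluate at $q=1$ to obtain $\frac{x^2(3-x)}{3(1-x)^3}$, and extract the coefficient of $x^{n-1}$ to get the total count $\frac{n-2}{3}n!$. Your expansion $B^\pp(x)=x+\theta\bigl(\tfrac{x^3}{6}-\tfrac{x^2}{2}\bigr)+O(\theta^2)$ and the ensuing bookkeeping are accurate, so this is just a more detailed write-up of the paper's own (very terse) argument.
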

\begin{proof}
Differentiating $G^\pp(x)$ with respect to $q$ and substituting $q=1$ yields
$$\frac{d}{dq}G^\pp(x)\mid_{q=1}=\frac{x^2(3-x)}{3(1-x)^3}.$$
Thus, the total number of peaks in  $\text{Flatten}(\pi)$ over all $\pi\in S_n$ is given by $\frac{n-2}{3}n!$ for $n\geq3$, which completes the proof.
\end{proof}

\subsection{Valleys}\label{ssec:valley}

Let $n\ge5$.
Then the recurrence (\ref{rec}) has the same form for $\vv$ as it does for $\pp$.
So for any $3\le k\le n-2$, we have
\[
g^\vv_n\bigl(1(k+2)\bigr)
-2g^\vv_n\bigl(1(k+1)\bigr)
+g^\vv_n(1k)
-\theta\cdotp g^\vv_{n-2}(1k)=0,
\]
with $g^\vv_n(13)=g^\vv_{n-1}-2\theta\cdotp g^\vv_{n-2}$ and
$g^\vv_n(14)=g^\vv_{n-1}-3\theta\cdotp g^\vv_{n-2}+2\theta\cdotp g^\vv_{n-3}$.
Note that the formula for $g^\vv_n(13)$ does not hold when $n=3$,
in contrast to the situation for peaks, which causes $g^\vv_n(1k)$ to differ from $g^\pp_n(1k)$.

\begin{theorem}\label{thm:valley-gf}
For any $n\ge k\ge3$, we have
\begin{equation}\label{fm:valley-g1k}
g^\vv_n(1k)=\sum_{i=1}^{k-1}\beta_{k,\,i}\cdot b^\vv_i\cdot g^\vv_{n-i}
+2\sqrt\theta^{n-1}\cdot\chi(\text{$n$ is odd and $k=n$}),
\end{equation}
where
$b^\vv_i=(-1)^{i+1}\theta^{\lfloor i/2\rfloor}$.
\end{theorem}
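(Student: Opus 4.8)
The plan is to follow the template of the proof of Theorem~\ref{thm:123-g1k} (and the $\pp$-analogue underlying Theorem~\ref{thm:peak-gf}), exploiting the fact that for the statistic $\vv$ the recurrence~(\ref{rec}) is literally the same as for $\pp$ and the coefficients $\beta_{k,i}$ and $b^\vv_i=b^\pp_i$ coincide; the only genuinely new ingredient is the exceptional base value at $n=3$, which is precisely what produces the extra term $2\sqrt\theta^{n-1}\chi(\text{$n$ is odd and $k=n$})$. Accordingly I would set
\[
M(n,k)=\sum_{i=1}^{k-1}\beta_{k,i}\,b^\vv_i\,g^\vv_{n-i},\qquad C(n,k)=2\sqrt\theta^{n-1}\,\chi(\text{$n$ is odd and $k=n$}),
\]
so that the assertion is $g^\vv_n(1k)=M(n,k)+C(n,k)$, and prove it by induction on $n$, using the recurrence
\[
g^\vv_n\bigl(1(k+2)\bigr)-2g^\vv_n\bigl(1(k+1)\bigr)+g^\vv_n(1k)-\theta\,g^\vv_{n-2}(1k)=0,\qquad 3\le k\le n-2,
\]
together with the values of $g^\vv_n(13)$ and $g^\vv_n(14)$, which pin down all of the $g^\vv_n(1k)$.

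The first step in the inductive step is to verify that $M(n,k)$ satisfies the homogeneous recurrence above in $k$. This is exactly the manipulation carried out (for $\pp$) in Theorem~\ref{thm:peak-gf} and, before that, for $\rr$ in Theorem~\ref{thm:123-g1k}: it rests solely on the Pascal-type identity $\beta_{k+1,i}-\beta_{k,i}=\beta_{k,i-1}$ (whence $\beta_{k+2,i}-2\beta_{k+1,i}+\beta_{k,i}=\beta_{k,i-2}$) and on $b^\vv_i=\theta\,b^\vv_{i-2}$ for $i\ge2$, both of which hold verbatim since $b^\vv=b^\pp$. One then checks that $C(n,k)$ also satisfies the homogeneous recurrence for $3\le k\le n-2$: the term $C$ is supported only on the diagonal $k=n$ with $n$ odd, so all four entries occurring in the recurrence indexed by $k$ vanish unless $n$ is odd and $k=n-2$, in which case the left-hand side reduces to
\[
C(n,n)-\theta\,C(n-2,n-2)=2\sqrt\theta^{n-1}-\theta\cdot2\sqrt\theta^{n-3}=0.
\]
Hence $M+C$ satisfies the recurrence, and it remains only to match the base data.

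For $k\in\{3,4\}$ one has $C(n,3)=C(n,4)=0$ for every $n\ge4$, while expanding $\beta_{3,i}b^\vv_i$ and $\beta_{4,i}b^\vv_i$ recovers exactly the stated identities $g^\vv_n(13)=g^\vv_{n-1}-2\theta g^\vv_{n-2}$ and $g^\vv_n(14)=g^\vv_{n-1}-3\theta g^\vv_{n-2}+2\theta g^\vv_{n-3}$; thus the formula holds in these cases for $n\ge4$ (and vacuously for $n\le2$). For $n=3$ the only relevant case is $k=3$: here $\text{Flatten}(\pi)=132$, which contains no valley, so $g^\vv_3(13)=2$, whereas $M(3,3)+C(3,3)=(g^\vv_2-2\theta g^\vv_1)+2\theta=(2-2\theta)+2\theta=2$, confirming the formula at the one point where the naive base identity $g^\vv_n(13)=g^\vv_{n-1}-2\theta g^\vv_{n-2}$ fails. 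Finally, for $n\ge5$, feeding the corrected values $g^\vv_n(13)=M(n,3)+C(n,3)$ and $g^\vv_n(14)=M(n,4)+C(n,4)$ and the level-$(n-2)$ case of the inductive hypothesis into the recurrence in $k$ propagates the formula to all $3\le k\le n$, completing the induction.

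The step I expect to require the most care is precisely this bookkeeping of the single anomalous base value. One must check that the lone discrepancy $g^\vv_3(13)-\bigl(g^\vv_2-2\theta g^\vv_1\bigr)=2\theta$ at $(n,k)=(3,3)$, once run through the recurrence, re-emerges as the clean closed term $2\sqrt\theta^{n-1}$ sitting on the diagonal $k=n$ for odd $n$ and contributes nothing elsewhere; the identity $\theta\cdot\sqrt\theta^{n-3}=\sqrt\theta^{n-1}$ is exactly what allows the correction to perpetuate itself down that diagonal, step by step, as $n$ increases by $2$. Once this is understood, the remainder is the same routine verification already performed in Theorems~\ref{thm:123-g1k} and~\ref{thm:peak-gf}.
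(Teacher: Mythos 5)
Your proof is correct and follows exactly the route the paper intends: the theorem is stated without an explicit proof, deferring to the verification method of Theorems~\ref{thm:123-g1k} and~\ref{thm:peak-gf} (check the recurrence in $k$ via $\beta_{k+1,i}-\beta_{k,i}=\beta_{k,i-1}$ and $b^\vv_i=\theta\, b^\vv_{i-2}$, then match the initial data at $k=3,4$). You correctly isolate the one genuinely new point --- the failure of $g^\vv_3(13)=g^\vv_2-2\theta g^\vv_1$ by exactly $2\theta$, which then propagates along the diagonal $k=n$ for odd $n$ via $\theta\cdot\sqrt\theta^{n-3}=\sqrt\theta^{n-1}$ --- and your bookkeeping of that correction term checks out.
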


Since $b^\vv_n=b^\pp_n$,
the generating function $B^\vv(x)=\sum_{n\geq1}b^\vv_n\frac{x^n}{n!}$
is given by $B^\vv(x)=B^\pp(x)$.

\begin{theorem}\label{thm:valley-main}
We have
\[
G^\vv(x)=\frac{\sqrt\theta \bigl(-\sinh(2\sqrt\theta x)+\sqrt\theta \cosh(2\sqrt\theta x)+2\sqrt\theta x+\sqrt\theta \bigr)}{2\bigl(-\sinh(\sqrt\theta x)+\sqrt\theta \cosh(\sqrt\theta x)\bigr)^2}.
\]
Moreover, for any $n\ge2$, we have
\begin{equation}\label{fm:valley-gn}
g^\vv_n
=n(2\sqrt\theta)^n
\sum_{j\ge1}j^{n-1}{(1-\sqrt\theta)^{j-1}\over(1+\sqrt\theta)^{j+1}}.
\end{equation}
\end{theorem}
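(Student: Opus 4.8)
The plan is to follow the route already taken for peaks: convert the formula of Theorem~\ref{thm:valley-gf} into a recurrence for the diagonal sequence $g^\vv_n=g^\vv_n(1)$, pass to the generating function $G^\vv(x)=\sum_{n\ge0}g^\vv_{n+1}x^n/n!$, solve the resulting differential equation, and read off the coefficients. The one new feature compared with peaks is the indicator term $2\sqrt\theta^{\,n-1}\chi(\text{$n$ odd and }k=n)$ appearing in Theorem~\ref{thm:valley-gf}, which will make the governing equation inhomogeneous. Concretely, by~\eqref{rec:g} with $k=1$ we have $g^\vv_n=g^\vv_n(12)+\sum_{k=3}^{n}g^\vv_n(1k)$; the first summand reduces to a multiple of $g^\vv_{n-1}$ by the same device as~\eqref{eq:cls-0} (summing the reduced values of $g^\vv_n(12h)$ over $h$), while into each remaining $g^\vv_n(1k)$ one substitutes the explicit expression of Theorem~\ref{thm:valley-gf}, interchanges the two sums, and evaluates $\sum_k\beta_{k,i}$ by the hockey-stick identity (treating the small values of $i$ separately). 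The outcome is a recurrence for $g^\vv_n$ that coincides term-for-term with the recurrence governing $g^\pp_n$ — the one whose generating-function incarnation is $G^\pp=(1-B^\pp)^{-2}$ of Theorem~\ref{thm:peak-gf} — except that it carries the extra summand $2\sqrt\theta^{\,n-1}$ for odd $n$ (and $0$ for even $n$), which is the total contribution of the indicator term since it fires exactly once, at $k=n$. (Equivalently, $\Delta_n:=g^\vv_n-g^\pp_n$ obeys the homogeneous peaks recurrence forced by this same term, with $\Delta_1=\Delta_2=0$, $\Delta_3=2\theta$.)

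Next I would feed this recurrence into $G^\vv$, using $B^\vv(x)=B^\pp(x)=1+\sinh(\sqrt\theta x)/\sqrt\theta-\cosh(\sqrt\theta x)$. The homogeneous part reproduces exactly the first-order linear equation that forces $G^\pp=(1-B^\pp)^{-2}$, namely $(1-B^\vv)\,\tfrac{d}{dx}G-2\bigl(\tfrac{d}{dx}B^\vv\bigr)G=0$, whose solutions are the constant multiples of $(1-B^\vv)^{-2}$. The inhomogeneous summand, after the same bookkeeping and using $\sum_{n\ge3,\,n\text{ odd}}2\sqrt\theta^{\,n-1}\frac{x^{n-1}}{(n-1)!}=2(\cosh(\sqrt\theta x)-1)$, produces a forcing term which comes out to $2\sqrt\theta\,\sinh(\sqrt\theta x)$, so that $G^\vv$ solves
\[
(1-B^\vv(x))\,\frac{d}{dx}G^\vv(x)-2\Bigl(\frac{d}{dx}B^\vv(x)\Bigr)G^\vv(x)=2\sqrt\theta\,\sinh(\sqrt\theta x).
\]
Solving this with the integrating factor $(1-B^\vv)^2$ and imposing the boundary value $G^\vv(0)=g^\vv_1=1$ gives $G^\vv=(1-B^\vv)^{-2}\bigl(1+\int_0^x2\sqrt\theta\sinh(\sqrt\theta t)\,(1-B^\vv(t))\,dt\bigr)$; evaluating the elementary integral and simplifying via $1-B^\vv(x)=\cosh(\sqrt\theta x)-\sinh(\sqrt\theta x)/\sqrt\theta$ together with the double-angle identities collapses this into the stated closed form for $G^\vv(x)$, the polynomial piece $2\sqrt\theta x$ in the numerator being the visible trace of the ``$-1$'' in $2(\cosh-1)$.

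For the coefficient formula, I would factor the denominator as $-\sinh(\sqrt\theta x)+\sqrt\theta\cosh(\sqrt\theta x)=\tfrac12(1+\sqrt\theta)e^{-\sqrt\theta x}\bigl(1-\rho(x)\bigr)$ with $\rho(x)=\tfrac{1-\sqrt\theta}{1+\sqrt\theta}e^{2\sqrt\theta x}$, expand $(1-\rho)^{-2}=\sum_{j\ge1}j\rho^{\,j-1}$, and combine with the numerator to recast $G^\vv$ as $1+2\sqrt\theta\sum_{j\ge1}\frac{(1-\sqrt\theta)^{j-1}}{(1+\sqrt\theta)^{j+1}}\bigl((2\sqrt\theta j x+1)e^{2\sqrt\theta j x}-1\bigr)$. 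Comparing coefficients of $x^{n-1}/(n-1)!$, and using $\sum_{n\ge1}n\,w^{n-1}/(n-1)!=(w+1)e^w$ with $w=2\sqrt\theta j x$, then yields~\eqref{fm:valley-gn} for $n\ge2$, the leading constant accounting for $g^\vv_1=1$.

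The step I expect to be the real obstacle is the first one: obtaining the diagonal recurrence for $g^\vv_n$ cleanly — in particular, checking that the indicator term contributes exactly $2\sqrt\theta^{\,n-1}$ for odd $n$ and nothing else, and applying the binomial summation identities for $\beta_{k,i}$ with the correct behaviour at the smallest indices — together with the eventual identification of the forcing term as $2\sqrt\theta\sinh(\sqrt\theta x)$. Once the recurrence is in hand, the passage to the generating function, the solution of the ODE, and the hyperbolic simplifications are the same routine already carried out for the peak and $123$-subword statistics.
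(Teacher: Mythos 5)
Your proposal is correct and follows essentially the same route as the paper: the same diagonal recurrence obtained by summing Theorem~\ref{thm:valley-gf} over $k$ (with the indicator contributing $2\sqrt\theta^{\,n-1}$ exactly once, for odd $n\ge3$), the same first-order linear ODE $(1-B^\vv)\frac{d}{dx}G^\vv-2\bigl(\frac{d}{dx}B^\vv\bigr)G^\vv=2\sqrt\theta\sinh(\sqrt\theta x)$ solved with the integrating factor $(1-B^\vv)^2$, and the same closed form. The only (minor) divergence is in the last step: the paper rewrites an antiderivative of $G^\vv$ in terms of the Eulerian generating function $E\bigl(x(1+\sqrt\theta),\tfrac{1-\sqrt\theta}{1+\sqrt\theta}\bigr)$ and invokes $A_{n-1}(q)=(1-q)^n\sum_{j\ge1}j^{n-1}q^{j-1}$, whereas you expand $(1-\rho)^{-2}=\sum_{j\ge1}j\rho^{j-1}$ directly, which amounts to the same geometric-series computation.
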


\begin{proof}
By~(\ref{fm:valley-g1k}), we deduce
\begin{align}\label{rec:valley-gn}
g^\vv_n=\sum_{i=1}^{n-1}\beta_{n+1,\,i+1}\cdot b^\vv_i\cdot g^\vv_{n-i}
+2\sqrt\theta^{n-1}\chi(\text{$n$ is odd}).
\end{align}
Multiplying~(\ref{rec:valley-gn}) by ${x^{n-2}\over (n-2)!}$, and summing over $n\geq2$, yields
\[
\frac{d}{dx}G^\vv(x)
=\sum_{n\geq2}\sum_{i=1}^{n-1}i\cdot\frac{b^\vv_i\cdot g^\vv_{n-i}\cdot x^{n-2}}{i!\cdot(n-1-i)!}+\sum_{n\geq2}\sum_{i=1}^{n-1}(n-1)\cdot\frac{b^\vv_i\cdot g^\vv_{n-i}\cdot x^{n-2}}{i!\cdot(n-1-i)!}
+2\!\!\sum_{n\ge1\atop{\text{$n$ is odd}}}\sqrt\theta^{n+1}{x^n\over n!},
\]
which is equivalent to
\[
\frac{d}{dx}G^\vv(x)=G^\vv(x)\frac{d}{dx}B^\vv(x)+\frac{d}{dx}\bigl(G^\vv(x)B^\vv(x)\bigr)+2\sqrt\theta \sinh(\sqrt\theta x).
\]
Solving this equation gives
\begin{align*}
G^\vv(x)&=\frac{\sqrt\theta\bigl(-\sinh(2\sqrt\theta x)+\sqrt\theta \cosh(2\sqrt\theta x)+2\sqrt\theta x+\sqrt\theta\bigr)}{2\bigl(\sqrt\theta \cosh(\sqrt\theta x)-\sinh(\sqrt\theta x)\bigr)^2}\\
&={d\over dx}\biggl({\sqrt{\theta}x(e^{2\sqrt{\theta}x}+1)\over (\sqrt{\theta}-1)e^{2\sqrt{\theta}x}+\sqrt{\theta}+1}\biggr).
\end{align*}
With the aid of~(\ref{gf:E}), we obtain
\begin{align*}
G^\vv(x)&=\frac{d}{dx}\left(x+\frac{x}{1-\sqrt\theta }
E\biggl(x(1+\sqrt\theta ),\,{1-\sqrt\theta\over 1+\sqrt\theta}\biggr)\right)\\
&=1+\sum_{n\geq1}\sum_{k\ge1}(n+1)E_{n,k}(1-\sqrt\theta )^{k-1}(1+\sqrt\theta )^{n-k}\frac{x^n}{n!}.
\end{align*}
Extracting the coefficient of $x^{n-1}$ yields
\begin{align*}
g^\vv_n
&=n(1+\sqrt\theta )^{n-2}\sum_{k\ge1}E_{n-1,k}\left(\frac{1-\sqrt\theta }{1+\sqrt\theta }\right)^{k-1}\\
&=n(1+\sqrt\theta )^{n-2}A_{n-1}\biggl({1-\sqrt\theta\over 1+\sqrt\theta}\biggr)\\
&=n(2\sqrt\theta)^n
\sum_{j\ge1}j^{n-1}{(1-\sqrt\theta)^{j-1}\over(1+\sqrt\theta)^{j+1}},
\end{align*}
which completes the proof.
\end{proof}

\begin{corollary}\label{corva}
If $n \geq 3$, then the average number of valleys in $\text{Flatten}(\pi)$ over $\pi\in S_n$ is given by $\frac{n-3}{3}$.
\end{corollary}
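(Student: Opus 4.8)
The plan is to derive the average number of valleys by differentiating the generating function $G^\vv(x)$ of Theorem~\ref{thm:valley-main} with respect to $q$ and then evaluating at $q=1$, exactly as was done for peaks in Corollary~\ref{corpeak}. Recall that $\theta = 1-q$, so $q=1$ corresponds to $\theta = 0$, and $\frac{d}{dq} = -\frac{d}{d\theta}$ when acting on a function of $\theta$ alone. The first step is therefore to expand the closed form
\[
G^\vv(x)=\frac{\sqrt\theta \bigl(-\sinh(2\sqrt\theta x)+\sqrt\theta \cosh(2\sqrt\theta x)+2\sqrt\theta x+\sqrt\theta \bigr)}{2\bigl(\sqrt\theta \cosh(\sqrt\theta x)-\sinh(\sqrt\theta x)\bigr)^2}
\]
as a power series in $\sqrt\theta$ (all half-integer powers will cancel, leaving an analytic function of $\theta$ near $\theta=0$), keep terms through order $\theta^1$, and read off the coefficient of $\theta$. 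I expect the computation to yield
\[
\frac{d}{dq}G^\vv(x)\Big|_{q=1} = -\frac{d}{d\theta}G^\vv(x)\Big|_{\theta=0} = \frac{x^2(x-3)}{3(1-x)^3}\cdot(-1),
\]
i.e. something of the shape $\frac{x^3}{3(1-x)^3}$ plus a lower-order correction; the precise constants are what the calculation produces.

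The second step is to extract the coefficient of $\frac{x^{n-1}}{(n-1)!}$ in $\frac{d}{dq}G^\vv(x)\big|_{q=1}$, which equals $\frac{1}{(n-1)!}\frac{d}{dq}g^\vv_n(q)\big|_{q=1}$, the total number of valleys summed over all $\pi\in\mathcal S_n$, divided by $(n-1)!$; multiplying by $(n-1)!$ and dividing by $n!$ gives the average. If the derivative generating function is $\frac{x^3}{3(1-x)^3}$ (as for $321$-subwords), then the total is $(n-3)!\binom{n-1}{3}$ for $n\geq 3$, and dividing by $n!$ gives $\frac{\binom{n-1}{3}}{n(n-1)(n-2)} = \frac{(n-3)}{3n}\cdot\frac{(n-1)(n-2)}{(n-1)(n-2)}$; however, the stated answer $\frac{n-3}{3}$ is larger, so the derivative generating function must instead contribute a factor making the total $\frac{n-3}{3}\,n!$, just as the peaks total was $\frac{n-2}{3}n!$ arising from $\frac{x^2(3-x)}{3(1-x)^3}$. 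I would therefore expect
\[
\frac{d}{dq}G^\vv(x)\Big|_{q=1}=\frac{x^2(3x-1)}{3(1-x)^3} \quad\text{or}\quad \frac{x^2(x+ \cdots)}{3(1-x)^3},
\]
with the property that $[x^{n-1}]$ of it times $(n-1)!$ equals $\frac{n-3}{3}\,n!/(n!) \cdot n! $; the task is simply to confirm which polynomial numerator emerges.

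The main obstacle is the bookkeeping in the Taylor expansion of $G^\vv(x)$ about $\theta=0$: one must carefully expand $\sinh(2\sqrt\theta x)$, $\cosh(2\sqrt\theta x)$, and the denominator $\bigl(\sqrt\theta\cosh(\sqrt\theta x)-\sinh(\sqrt\theta x)\bigr)^2$ to sufficient order in $\sqrt\theta$, verify that the $\sqrt\theta^{-1}$ and odd-power singular terms cancel so that $G^\vv$ is genuinely analytic at $\theta=0$ with $G^\vv(x)|_{\theta=0}=\frac{1}{(1-x)^2}$ (consistent with $g^\vv_n(1)=n!$ since $q=1$ counts all permutations), and then isolate the linear-in-$\theta$ term. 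An alternative, cleaner route avoids this expansion entirely: differentiate the last displayed formula of Theorem~\ref{thm:valley-main}, namely $g^\vv_n = n(2\sqrt\theta)^n\sum_{j\geq1}j^{n-1}\frac{(1-\sqrt\theta)^{j-1}}{(1+\sqrt\theta)^{j+1}}$, with respect to $q$ at $q=1$; but that sum diverges at $\theta=0$ term-by-term and needs to be handled via its closed Eulerian-polynomial form $g^\vv_n = n(1+\sqrt\theta)^{n-2}A_{n-1}\bigl(\tfrac{1-\sqrt\theta}{1+\sqrt\theta}\bigr)$, whose $q$-derivative at $q=1$ uses $A_{n-1}(1)=(n-1)!$ and $A_{n-1}'(1)$ (a known quantity equal to $\frac{(n-1)!\,(n-2)}{2}$, counting total ascents). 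Either way, once $\frac{d}{dq}g^\vv_n|_{q=1}$ is computed and seen to equal $\frac{(n-3)}{3}\,n!$, dividing by $n!$ gives the claimed average $\frac{n-3}{3}$, completing the proof for $n\geq3$.
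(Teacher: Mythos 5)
Your overall strategy---differentiate $G^\vv(x)$ with respect to $q$ and evaluate at $q=1$---is exactly the route the paper takes, but your write-up stops short of the one step that actually proves the corollary: you never carry out the differentiation, and every candidate you float for the answer is incorrect. The correct value is
\[
\frac{d}{dq}G^\vv(x)\Big|_{q=1}=\frac{2x^3(2-x)}{3(1-x)^3},
\]
and extracting the coefficient of $x^{n-1}$ and multiplying by $(n-1)!$ gives
\[
\Bigl(\tfrac{4}{3}\tbinom{n-2}{2}-\tfrac{2}{3}\tbinom{n-3}{2}\Bigr)(n-1)!
=\frac{n(n-3)}{3}\,(n-1)!=\frac{n-3}{3}\,n!,
\]
which upon dividing by $n!$ yields the claimed average. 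By contrast, your candidate $\frac{x^3}{3(1-x)^3}$ produces the $321$-subword average $\frac{(n-2)(n-3)}{6n}$, your candidate $\frac{x^2(3-x)}{3(1-x)^3}$ produces the peak average $\frac{n-2}{3}$, and $\frac{x^2(3x-1)}{3(1-x)^3}$ produces $\frac{(n-2)(n-4)}{3n}$; none gives $\frac{n-3}{3}$. So ``confirming which polynomial numerator emerges'' is not a formality to be deferred---it is the entire content of the proof, and as written the argument does not establish the stated formula.

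Your proposed ``cleaner'' alternative via $g^\vv_n=n(1+\sqrt\theta)^{n-2}A_{n-1}\bigl(\frac{1-\sqrt\theta}{1+\sqrt\theta}\bigr)$ also has a hidden difficulty. Writing $u=\sqrt\theta$, one has $\frac{d}{dq}=-\frac{d}{d\theta}=-\frac{1}{2u}\frac{d}{du}$; since $g^\vv_n$ is an even function of $u$ (by the palindromicity $A_m(q)=q^{m-1}A_m(1/q)$), its first $u$-derivative vanishes at $u=0$ and the limit picks up $-\tfrac12$ times the \emph{second} $u$-derivative at $u=0$. That computation requires $A_{n-1}''(1)$ in addition to $A_{n-1}(1)=(n-1)!$ and $A_{n-1}'(1)=\frac{(n-2)(n-1)!}{2}$, so it is not the two-ingredient calculation you describe (though it can be pushed through). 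If you want an independent check on the constant, the paper's final subsection gives a purely combinatorial count showing the total number of flattened valleys over $\mathcal{S}_n$ is $\frac{n-3}{3}\,n!$.
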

\begin{proof}
Differentiating $G^\vv(x)$ with respect to $q$ and substituting $q=1$ yields
$$\frac{d}{dq}G^\vv(x)\mid_{q=1}=\frac{2x^3(2-x)}{3(1-x)^3}.$$
Thus, the total number of valleys in  $\text{Flatten}(\pi)$ over all $\pi\in S_n$ is given by $\frac{n-3}{3}n!$ for $n\geq3$, which completes the proof.
\end{proof}

\subsection{Combinatorial proofs}  In this section, we explain, bijectively, the previous formulas for the average number of occurrences of the various subword patterns. We first consider the cases for descents, ascents, and $321$-subwords.\\

\noindent\textbf{Combinatorial proofs of Corollaries \ref{descor1}, \ref{asccor} and \ref{cor321}.}\\

We first treat Corollary \ref{descor1}. Upon multiplying by $n!$, one sees that the total number of descents in the flattened sense within all of the permutations of $[n]$ is given by $\binom{n-1}{2}(n-1)!$ if $n \geq 1$.  This formula may then be explained combinatorially as follows.  First select two members $i<j$ of $[2,n]$.  It is enough to show that the number of permutations $\pi$ of length $n$ such that the letters $i$ and $j$ comprise a descent in $\text{Flatten}(\pi)$ is $(n-1)!$.  To do so, first write the letters in $[i-1]$ as a permutation in standard cycle form.  Then add the letter $i$ to this permutation either within a current cycle (following some member of $[i-1]$) or as a new cycle $(i)$ of length one.  In the former case, we then write the letter $j$ just before $i$ within its cycle, while in the latter case, we write $j$ at the end of the cycle directly preceding the $1$-cycle $(i)$.  Finally, add the letters in $[i+1,n]-\{j\}$ so that no letter comes bet
 ween $j$ and $i$.  Note that there are $(n-1)!$ ways in which to arrange all of the letters in $[n]$ subject to the above restriction upon treating $ji$ as a single letter, which completes the proof of Corollary \ref{descor1}.

For Corollary \ref{asccor}, first note that each of the $n-1$ adjacencies within any member of $\mathcal{S}_n$ is either a flattened ascent or descent.  By subtraction and Corollary \ref{descor1}, there are
$$(n-1)n!-\binom{n-1}{2}(n-1)!=\frac{(n-1)(n+2)}{2}(n-1)!$$
flattened ascents within all of the permutations of length $n$.

A proof similar to the one given for Corollary \ref{descor1} applies to Corollary \ref{cor321} and shows that the total number of occurrence of $321$ is $(n-2)!\binom{n-1}{3}$.  One would now choose three elements $i<j<k$ in $[2,n]$ and treat the string $kji$ as a single letter (equivalent to $i$) when forming a permutation of length $n$ having $kji$ as a subword in the flattened form. Note that such a string may lie completely within a cycle, or straddle two cycles, with $i$ being the first letter of a new cycle.  \hfill \qed\\

\noindent\textbf{Combinatorial proofs of Corollaries \ref{corpeak} and \ref{corva}.}\\

We first treat Corollary \ref{corva} and show equivalently in this case that there are $\frac{n-3}{3}n!$ valleys in the flattened sense within all of the members of $\mathcal{S}_n$, where $n \geq 3$.  We first consider valleys of the form $abc$, where the letter $c$ is \emph{not} the first letter of some cycle within a permutation, while the letter $b$ may or may not be.  Note that there are $2\binom{n-1}{3}$ choices for $a,b,c \in [2,n]$, as there is no restriction on the relative sizes of $a$ and $c$.  Once $a$, $b$, and $c$ have been selected, there are $(n-2)!$ permutations having a valley involving these letters (in the given order), upon treating the string $abc$ as a single letter (equivalent to the letter $b$, in fact).  In all, there are $2\binom{n-1}{3}(n-2)!$ valleys of the given form.

To complete the proof, we must show that there are
$$\frac{n-3}{3}n!-2\binom{n-1}{3}(n-2)!=4\binom{n-1}{3}(n-3)!$$
valleys in the flattened sense of the form $rst$, where $t$ starts a cycle, within all of the members of $\mathcal{S}_n$.  To do so, we first count the number of occurrences of $rs$ within all the permutations of length $n$ (expressed in standard cycle form) such that (i) $r>s$; (ii) either $r$ and $s$ occur in the same cycle as the last two letters in their cycle or $r$ occurs at the end of a cycle, with $(s)$ the next cycle; and (iii) either no cycle follows the one containing $s$ (in either case of (ii)) or the cycle directly following the one containing $s$ starts with a letter that is greater than $s$.

Note that there are $2\binom{n-1}{2}(n-2)!$ occurrences of $rs$ satisfying the conditions (i)-(iii).  To see this, first pick any two elements $r,s \in [2,n]$ and then arrange the remaining members of $[n]$ as a permutation $\sigma$ in standard cycle form in any one of the possible $(n-2)!$ ways.  Once this is done, either add the string $rs$ to the end of the cycle of $\sigma$ whose smallest element is largest among those cycles whose smallest element is less than $s$, or add $r$ to the end of this cycle of $\sigma$ and then add the $1$-cycle $(s)$ directly after it.  In the latter case, note that the cycle $(s)$ might possibly go between two cycles of $\sigma$, but that the ordering of the cycles would be preserved in this case nonetheless.

From all the occurrences of $rs$ satisfying conditions (i)-(iii) above, we subtract those occurrences in which no cycle follows the one containing $s$ in (iii).  (Note that this will give the total number of valleys $rst$, where $t$ starts a cycle.)  To count these occurrences of $rs$, we equivalently count the permutations $\pi$ of $[n]$ containing two letters $r$ and $s$ such that $r>s$, where $r$ and $s$ are the last two letters in $\text{Flatten}(\pi)$.  To count such permutations, first pick three numbers $u<s<r$ of $[n]$.  Arrange the members of $[u-1]$ as a permutation in standard form; then add $u$ as a $1$-cycle to this permutation; then add the members of $[u+1,n]-\{s,r\}$ such that no letter starts a new cycle; finally, either add $r$ and $s$ to the end of the cycle containing $u$ or just add $r$ to the end of this cycle along with the $1$-cycle $(s)$.  Note that there are
$$2(u-1)!\prod_{i=u+1}^{s-1}(i-1)\prod_{j=s+1}^{r-1}(j-2)\prod_{k=r+1}^{n}(k-3)=2(n-3)!$$
ways in which to arrange the members of $[n]$ as described, once $u$, $s$  and $r$ have been chosen.

This implies that there are $2\binom{n}{3}(n-3)!$ permutations of the form described in the previous paragraph and thus the same number of occurrences of $rs$ satisfying (i)-(iii) in which there is no cycle following the one containing $s$.  By subtraction, we get
$$2\binom{n-1}{2}(n-2)!-2\binom{n}{3}(n-3)!=4\binom{n-1}{3}(n-3)!$$
valleys of the form $rst$ where $t$ starts a cycle within all of the members of $\mathcal{S}_n$, as desired, which completes the proof of Corollary \ref{corva}.

For Corollary \ref{corpeak}, first note that there are the same number of peaks as there are valleys in the flattened sense within a permutation $\pi$ if and only if the last two letters of $\text{Flatten}(\pi)$ comprise an ascent and there is one more peak than valley if and only if the last two letters of $\text{Flatten}(\pi)$ comprise a descent. From the proof above for valleys, we see that there are $2\binom{n}{3}(n-3)!=\frac{n!}{3}$ permutations of length $n$ whose last two letters form a flattened descent.  Thus, there are $\frac{n!}{3}$ more peaks than valleys within all of the members of $\mathcal{S}_n$ and so the total number of peaks is $\frac{n-2}{3}n!$, by the prior result. \hfill \qed\\

\noindent\textbf{Combinatorial proof of Corollary \ref{cor123}.}\\

First note that, by subtraction, we have
$$\text{total(123)}=\text{total(ascents)}-\text{total(ascents~at~ end)}-\text{total(peaks)},$$
where $\text{total}(\s)$ denotes the total number occurrences of the statistic in the flattened sense within all of the members of $\mathcal{S}_n$ (by \emph{ascent at end}, we mean an ascent involving the final two letters of the flattened form).  From the proof of Corollary \ref{corva} above, we see that there are $2\binom{n}{3}(n-3)!=\frac{n!}{3}$ flattened descents in all involving the final two letters and hence $n!-\frac{n!}{3}=\frac{2n!}{3}$ ascents in all involving these letters, by subtraction.  By Corollaries \ref{asccor} and \ref{corpeak}, we then have
$$\text{total(123)}=\frac{(n-1)(n+2)}{2}(n-1)!-\frac{2n!}{3}-\frac{n-2}{3}n!=\frac{n^2+3n-6}{6}(n-1)!,$$
which completes the proof. \hfill \qed\\

\section{Conclusion}

Lemma~\ref{lem:reduction} holds for all patterns of length three and can even be generalized to patterns of greater length. In fact, there are other patterns involving three letters. For example,
there are the patterns $132$, $213$, $231$, $312$, and those of the form $(i,j,k)$ where either
$i=\max\{i,j,k\}$ or $k=\max\{i,j,k\}$ or $i=\min\{i,j,k\}$ or $k=\min\{i,j,k\}$.
Among these eight patterns, only the pattern of $i=\min\{i,j,k\}$
admits to the exchanging trick,
which is necessary for solving it by the technique featured in the current paper.
However, for the pattern $i=\min\{i,j,k\}$, we will need other techniques to deal with $g_n(1ij)$ in the case when $j>i$.

We note further that the methods of the third section provide not only explicit formulas for $g^\s_n$ but also formulas for $g^\s_n(1k)$, where $2 \leq k \leq n$ and $\s \in \{\rr,\dd,\pp,\vv\}$, upon substituting the respective expressions for $g^\s_n$ back into Theorems \ref{thm:123-g1k}, \ref{thm:321-g1k}, \ref{thm:peak-gf} and \ref{thm:valley-gf}.

We also remark that the method presented in the previous section
applies to the descent statistic discussed in Section~\ref{sec:des}.
Let
$g^\des_n(a_1a_2\cdots a_k)=\sum_{\pi}q^{\des(\text{Flatten}(\pi))}$,
where~$\pi$ ranges over all permutations
of length~$n$ such that $\text{Flatten}(\pi)$ starts with $a_1a_2\cdots a_k$.
One may show for all $k\ge3$ that
\[
g^\des_n(1k)
=\sum_{i=1}^{k-1}\alpha_{k,i}(-\theta)^{i-1}\cdotp g^\des_{n-i},
\]
where $\alpha_{k,i}={k-1\choose i-1}-{k-3\choose i-3}$ is as in
Theorem~\ref{thm:123-g1k} and $g_n=g_n(1)$. Along these same lines, we may deduce
\[
g^\des_n=\sum_{i=1}^n\Biggl[{n\choose i}-{n-2\choose i-2}\Biggr](-\theta)^{i-1}\cdotp g^\des_{n-i},
\]
which implies the generating function~\eqref{gf:des}.

\noindent{\bf Acknowledgements.}
The third author was supported by the National Natural Science Foundation of China (Grant No. 11101010).


\end{document}